\numberwithin{equation}{section}
\newtheorem{theorem}{Theorem}[section]
\newtheorem{proposition}[theorem]{Proposition}
\newtheorem{lemma}[theorem]{Lemma}
\newtheorem{corollary}[theorem]{Corollary}
\newtheorem{conjecture}[theorem]{Conjecture}
\theoremstyle{plain}
\theoremstyle{remark}
\newtheorem{remark}[theorem]{Remark}
\renewcommand{\AA}{\mathbb{A}}
\newcommand{\CC}{\mathbb{C}}
\newcommand{\LL}{\mathbb{L}}
\newcommand{\XX}{\mathbb{X}}
\newcommand{\ZZ}{\mathbb{Z}}
\newcommand{\Der}[1]{\mathrm{D}^b(#1)}
\newcommand{\Hom}[3]{\mathrm{Hom}_{#1}(#2,#3)}
\newcommand{\End}[2]{\mathrm{End}_{#1}(#2)}
\newcommand{\Ext}[4]{\mathrm{Ext}^{#1}_{#2}(#3,#4)}
\newcommand{\lcm}{\mathrm{l.c.m.}}
\DeclareMathOperator{\CM}{CM}
\newcommand{\coh}[1]{\mathrm{coh}\textrm{-}#1}
\newcommand{\vect}[1]{\mathrm{vect}\textrm{-}#1}
\newcommand{\svect}[1]{\underline{\mathrm{vect}}\textrm{-}#1}
\newcommand{\zvect}[1]{\underline{\mathrm{vect}}^{\mathbb{Z}}\textrm{-}#1}
\newcommand{\mmod}[1]{\mathrm{mod}\textrm{-}#1}
\newcommand{\Cc}{\mathcal{C}}
\newcommand{\Hh}{\mathcal{H}}
\newcommand{\Nn}{\mathcal{N}}
\newcommand{\Oo}{\mathcal{O}}
\newcommand{\Ss}{\mathcal{S}}
\newcommand{\Tt}{\mathcal{T}}
\newcommand{\vx}{\vec{x}}
\newcommand{\vy}{\vec{y}}
\newcommand{\vc}{\vec{c}}
\newcommand{\vw}{\vec{\omega}}
\newcommand{\vom}{\vec{\omega}}
\newcommand{\can}[1]{{(#1)}}
\newcommand{\her}[1]{[#1]}
\newcommand{\fuchs}[1]{{\langle#1]}}
\newcommand{\hstar}[1]{[#1]}
\newcommand{\nak}[2]{N_{#1}(#2)}
\newcommand{\Nak}[2]{\Nn_{#1}(#2)}
\newcommand{\cox}[1]{\mathbf{\Phi}_n}
\newcommand{\rperp}[1]{{#1}^{\bot}}
\newcommand{\incl}{\hookrightarrow}
\newcommand{\X}{\XX}
\newcommand{\Z}{\ZZ}
\newcommand{\cp}{\mathcal{P}}
\newcommand{\bl}{\mathbb{L}}
\newcommand{\co}{{\mathcal O}}
\newcommand{\bS}{{\mathbb{S}}}
\newcommand{\sHom}{\underline{\mathrm{Hom}}}
\begin{document}

\title{Nakayama algebras and Fuchsian singularities}
\author{Helmut Lenzing, Hagen Meltzer and Shiquan Ruan$^*$}

\address{Institut f{\"u}r mathematik, Universit{\"a}t Paderborn, 33095 Paderborn, Germany.} \email{helmut@math.uni-paderborn.de}
\address{Instytut matematyki, Uniwersytet Szczeci{\'n}ski, 70451 Szczecin, Poland.} \email{hagen.meltzer@usz.edu.pl}
\address{School of Mathematical Sciences, Xiamen University, 361005 Fujian, P.R. China.} \email{sqruan@xmu.edu.cn }

\thanks{$^*$ Corresponding author}

\begin{abstract} This present paper is devoted to the study of a class of Nakayama algebras $N_n(r)$ given by the path algebra of the equioriented quiver $\mathbb{A}_n$ subject to the nilpotency degree $r$ for each sequence of $r$ consecutive arrows. We show that the Nakayama algebras $N_n(r)$ for certain pairs $(n,r)$ can be realized as endomorphism algebras of tilting objects in the bounded derived category of coherent sheaves over a weighted projective line, or in its stable category of vector bundles. Moreover, we classify all the Nakayama algebras $N_n(r)$ of Fuchsian type, that is, derived equivalent to the bounded derived categories of extended canonical algebras. We also provide a new way to prove the classification result on Nakayama algebras of piecewise hereditary type, which have been done by Happel--Seidel before.
 \end{abstract}


\date{\today}

\subjclass[2010]{Primary 18G80; 16G20; 16G70; Secondary 14J17; 14H60; 16G60}

\maketitle

\section{Introduction}

%
%
%
%
%

Let $k$ be an algebraically closed field.
A finite dimensional algebra over $k$ is called a \emph{Nakayama algebra} if its indecomposable projective or injective modules have a unique composition series.
A very natural class of such algebras is formed by the algebras $N_n(r)$, given as the path algebra of the equioriented quiver
$$\xymatrix{
  1& \ar[l]_-{x}  2&  \ar[l]_-{x}  3&  \ar[l]_-{x}  \cdots  &\ar[l]_-{x}  n-1& \ar[l]_-{x}  n  }$$
of type $\mathbb{A}_n$ subject to all relations $x^r=0$.
The representation theory of the algebras $N_n(r)$ is well-understood. They are all representation finite and even simply connected. This changes drastically if one asks for a description of their bounded derived categories.


A finite dimensional algebra $\Lambda$ over $k$ is said to be \emph{piecewise hereditary}, if its bounded derived category $D^b(\Lambda)$ is equivalent to $D^b(\mathcal{H})$ of a hereditary abelian category $\mathcal{H}$. The possible types of $\mathcal{H}$ occurring in this
situation have been described in \cite{Happel:1997}. More precisely, either it is the module category of finite
dimensional modules over a finite dimensional basic hereditary $k$-algebra,
or it is the category of coherent sheaves over a weighted projective line in the sense of \cite{Geigle:Lenzing:1987}.
Since $k$ is assumed
to be algebraically closed, for the former case $\Lambda$ is derived equivalent to a path algebra over $k$ of a finite quiver without oriented cycles, while for the latter case, $\Lambda$ is derived equivalent to a canonical algebra in the sense of \cite{Ringel:1984}.
In the first case we call $\Lambda$ of \emph{module type} and
in the second case we call $\Lambda$ of \emph{sheaf type}.
We refer to \cite{Happel:Zacharia:2008, Happel:Zacharia:2010} for properties of piecewise hereditary algebras.

Associated to a weighted projective line $\X$ of negative Euler characteristic, there is a graded algebra $R=\bigoplus_{n\geq0}\Hom{}{\co_{\X}}{ \tau^n\co_{\X}}$, where $\co_{\X}$ stands for the structure sheaf of $\X$ and $\tau$ is the Auslander-Reiten translation in the category of coherent sheaves over $\X$. The graded algebra $R$ is isomorphic to an algebra of automorphic forms, called \emph{Fuchsian singularity}, with respect to certain Fuchsian group action on the upper half plane for the base field of complex numbers, (more precisely, the orbit fundamental group of $\X$ action on its universal covering space), see \cite{Milnor:1975, Neumann:1977}.
The $k$-algebra $R$ is commutative, graded integral
Gorenstein, in particular Cohen–Macaulay, of Krull dimension two, c.f. \cite{Lenzing:1994, Lenzing:2021}.

The graded singularity category of $R$ was considered by Buchweitz \cite{Buchweitz:1987} and Orlov \cite{Orlov:2009}, see also
Krause’s account \cite{Krause:2005} for a related, but slightly different approach.
It is remarkable that this singularity category is equivalent to the stable category $\zvect\X:=\vect\X/[\tau^{\Z}\co_\X]$ of vector bundles over $\X$ modulo the $\tau$-orbit $\tau^{\Z}\co_\X$ of the structure sheaf, and also equivalent to the bounded derived category of the \emph{extended canonical algebra} arising from the canonical algebra (associated with $\X$) via one-point extension by an indecomposable
projective module.
We refer to \cite{Lenzing:2011, Lenzing:Pena:2011} for more details.

In their remarkable paper \cite{Happel:Seidel:2010}, Happel and Seidel classify all the Nakayama algebras which are piecewise hereditary. The aim of this paper is to classify all the Nakayama algebras of \emph{Fuchsian type}, that is, the Nakayama algebras which are derived equivalent to Fuchsian singularities, or equivalently, derived equivalent to extended canonical algebras. The main result of this paper is contained in Figure \ref{Figure1} given below ({see Theorem \ref{classification-of-Fuchsian-nakayama-cat}}).

%


\small
\begin{figure}[H]\label{Figure1}
$$
\tiny
\def\l{\langle}
\def\y{\cellcolor{yellow}}
\def\r{\cellcolor{red}}
\def\gg{\cellcolor{OliveGreen}}
\def\g{\cellcolor{YellowGreen}}
\arraycolsep3.5pt
\begin {array}{c|cccccccccccccc}
r&\g\vdots  &\g\vdots  &\y\vdots  &\r\vdots&\r\vdots&\gg\vdots& & & & & &&   \\
 &\g[2,3,18]&\g[2,3,19]&\y(2,3,19)&\r\l2,3,19]&\r\l2,3,20]&\gg342 & & & & & &&   \\ \noalign{\smallskip}
18 &\g[2,3,17]&\g[2,3,18]&\y(2,3,18)&\r\l2,3,18]&\r\l2,3,19]&\gg36 & & & & & &&   \\ \noalign{\smallskip}
17 &\g[2,3,16]&\g[2,3,17]&\y(2,3,17)&\r\l2,3,17]&\r\l2,3,18]&\gg306 & & & & & &&   \\ \noalign{\smallskip}
16 &\g[2,3,15]&\g[2,3,16]&\y(2,3,16)&\r\l2,3,16]&\r\l2,3,17]&\gg288 & & & & & &&   \\ \noalign{\smallskip}
15&\g[2,3,14]&\g[2,3,15]&\y(2,3,15)&\r\l2,3,15]&\r\l2,3,16]&\gg90 & & & & & &&   \\ \noalign{\smallskip}
14 &\g[2,3,13]&\g[2,3,14]&\y(2,3,14)&\r\l2,3,14]&\r\l2,3,15]&\gg252 & & & & & &&   \\ \noalign{\smallskip}
13 &\g[2,3,12]&\g[2,3,13]&\y(2,3,13)&\r\l2,3,13]&\r\l2,3,14]&\gg234 & & & & & &&   \\ \noalign{\smallskip}
12 &\g[2,3,11]&\g[2,3,12]&\y(2,3,12)&\r\l2,3,12]&\r\l2,3,13]&\gg72 & & & & & &&   \\ \noalign{\smallskip}
11 &\g[2,3,10]&\g[2,3,11]&\y(2,3,11)&\r\l2,3,11]&\r\l2,3,12]&\gg198 & & & & & &&   \\ \noalign{\smallskip}
10&\g[2,3,9]&\g[2,3,10]&\y(2,3,10)&\r\l2,3,10]&\r\l2,3,11]&\gg180 & & & & & &&   \\ \noalign{\smallskip}
9 &\g[2,3,8]&\g[2,3,9]&\y(2,3,9)&\r\l2,3,9]&\r\l2,3,10]&\gg18 & & & & & &&   \\ \noalign{\smallskip}
8 &\g[2,3,7]&\g[2,3,8]&\y(2,3,8)&\r\l2,3,8]&\r\l2,3,9]&\r\l2,3,10]&\r\l2,3,11]&\gg24 & & & &&   \\ \noalign{\smallskip}
7 &\g[2,3,6]&\g[2,3,7]&\y(2,3,7)&\r\l2,3,7]&\r\l2,3,8]&\gg126 & & & & & &&   \\ \noalign{\smallskip}
6 &\g[2,3,5]&\g[2,3,6]&\y(2,3,6)&\y(2,3,7)&\r\l2,3,7]&\r\l2,4,7]&\r\l2,5,7]&\gg12 & & & &&   \\ \noalign{\smallskip}
5&\g[2,3,4]&\g[2,3,5]&\y(2,3,5)&\g[2,3,7]&\r\l2,4,5]&\r\l2,5,5]&\r\l2,5,6]&\r\l2,6,6]&\gg40 & & &&   \\ \noalign{\smallskip}
4 &\g[2,3,3]&\g[2,3,4]&\y(2,3,4)&\y(2,4,4)&\y(2,4,5)&\r\l2,4,5]&\r\l2,5,5]&\r\l2,5,6]&\r\l2,5,7]&\gg12  & &&  \\ \noalign{\smallskip}
3&\g[2,3,2]&\g[2,3,3]&\g[2,3,4]&\g[2,3,5]&\y(2,3,5)&\y(2,3,6)&\y(2,3,7)&\r\l2,3,7]&\r\l2,3,8]&\r\l2,3,9]&\r\l2,3,10]& &\gg18\\ \noalign{\smallskip}\hline
 &2& 3&4 &5& 6& 7&8 &9 &10& & && n-r \end {array}
$$
\label{figure:green:wall}
\caption{Module type (green), sheaf type (yellow), Fuchsian type (red) and the dark green wall}
\end{figure}
\normalsize

%

For a triple $a,b,c$, the symbols $[a,b,c],(a,b,c)$ and $\langle a,b,c]$ in Figure \ref{Figure1} each denotes the derived equivalent type of $N_n(r)$ for $r\geq 3$ and $n-r\geq 2$:
\begin{itemize}
  \item [-] module type $[a,b,c]$: denotes the type of the bounded derived category of the hereditary star quiver with three branches of length $a,b,c$ respectively.
  \item [-] sheaf type $(a,b,c)$: denotes the type of the bounded derived category of coherent sheaves $D^b(\coh\X)$ for the weighted projective line $\X:=\X(a,b,c)$.
  \item [-] Fuchsian type $\langle a,b,c]$: denotes the type of the stable category of vector bundles $\zvect\X$ for the weighted projective line $\X:=\X(a,b,c)$.
\end{itemize}
The dark green bricks on the right, forming the wall, contain the Coxeter numbers of $N_n(r)$.

From Figure \ref{Figure1} we see that the Nakayama algebras of Fuchsian type (red area) contain two (infinite) families: $\nak{r+5}{r} \;(r\geq 7)$ and $\nak{r+6}{r}\;(r\geq 6)$,  having Fuchsian type $\langle 2,3,r]$ and $\langle 2,3,r+1]$ respectively; and some further (finite) sporadic cases.

In order to obtain these two infinite families of Fuchsian Nakayama categories(=bounded derived categories of Nakayama algebras of Fuchsian type), we first construct a tilting complex in the bounded derived category of coherent sheaves over the weighted projective line of weight type $(2,3,r)$, whose endomorphism algebra is isomorphic to the Nakayama algebra $N_{r+4}(r)$ (see Proposition \ref{tilting realization}); and then by using one-point extension approach to show that $\nak{r+5}{r}$ have Fuchsian type $\langle 2,3,r]$ (see Corollary \ref{familis Nakayama corollary}), and $\nak{r+6}{r}$ have Fuchsian type $\langle 2,3,r+1]$ (see Proposition \ref{N-r+6-r}). During this procedure, an important strategy is to endow the projective, injective and simple $N_{r+4}(r)$-modules with the degree-rank data, inherited from that of coherent sheaves (see Remark \ref{identity}).


The most difficult part of this paper is to deal with the sporadic Nakayama algebras of Fuchsian types appearing in Figure \ref{Figure1}, namely, to show those sporadic Nakayama algebras have the desired Fuchsian types. For this we use two further strategies. The first one is to develop the extended Happel-Seidel symmetries between distinct Nakayama algebras (see Proposition \ref{HS-symmetry}); and the second one is to  construct tilting objects consisting of line bundles in the stable category $\zvect\X$ for certain weight types, whose endomorphism algebras are Nakayama algebras (see Theorem \ref{tiltingobjects}).


Recall that there is \emph{Happel-Seidel symmetry} developed in \cite{Happel:Seidel:2010}. More precisely, for any $a,b\geq 2$, there is an equivalence $$D^b(N_n(a))\simeq D^b(N_n(b))$$ for $n=(a-1)(b-1)$. This symmetry has a nice explanation from the knowledge of weighted projective lines, since both of these two categories are equivalent to the stable category of vector bundles
$\svect\X:=\vect\X/[\mathcal{L}]$, obtained from $\vect\X$ by factoring out all line bundles $\mathcal{L}$,
for the weighted projective line $\X$ of weight type $(2,a,b)$, c.f. \cite{Kussin:Lenzing:Meltzer:2013adv}. By analysing the realization of Nakayama algebras in the stable category $\svect\X$, we obtain further symmetries between Nakayama algebras, called \emph{extended Happel-Seidel symmetry} as follows (see Proposition \ref{HS-symmetry}):
$$D^b(N_{n\pm1}(a))\simeq  D^b(N_{n\pm1}(b)).$$

With the help of the (extended) Happel-Seidel symmetries, we provide a new (simple) proof for the classification result of piecewise hereditary Nakayama categories obtained in \cite{Happel:Seidel:2010}, by taking perpendicular category approach (see Theorem \ref{classification-of-piecewise-hereditary-nakayama-cat}).

We will give a brief outline of this paper.
In Section 2 we recall some basic materials and explain some notations. In Section 3, we construct a tilting complex in the bounded derived category of coherent sheaves over the weighted projective line of weight type $(2,3,r)$, whose endomorphism algebra is isomorphic to the Nakayama algebra $N_{r+4}(r)$. Then by using perpendicular category and one-point extension approaches we obtain four families of Nakayama algebras of module type or Fuchsian type.
In Section 4, we state the extended Happel-Seidel symmetries between distinct Nakayama algebras, and then
provide a new proof for the classification result of piecewise hereditary Nakayama categories obtained in \cite{Happel:Seidel:2010}. In order to obtain the sporadic Fuchsian Nakayama categories in Figure \ref{Figure1}, we investigate the stable category $\zvect\X$ of vector bundles in more details in Section 5. We finally construct tilting objects consisting of line bundles in $\zvect\X$ for weight types $(2,4,5),(2,4,7),(2,5,5)$ and (2,5,6) respectively, whose endomorphism algebras are Nakayama algebras. The proofs of the extension-free properties need to be considered case by case, which are put in Appendix \ref{Appendix A}. In the final Section 6, we give the explicit formulas of the Coxeter polynomials for a class of Nakayama algebras, and then
obtain the main classification result for Nakayama algebras of Fuchsian type, under certain mild conjectures.

We denote by $\mmod \Lambda$ the category of finitely generated left $\Lambda$-modules, and denote by $D^b(\Lambda)\simeq  D^b(\Lambda')$ the derived equivalence between $\Lambda$ and $\Lambda'$. The Serre functor of a triangulated category is denoted by $\bS=\tau[1]$, where $\tau$ is the Auslander-Reiten translation and $[1]$ denotes the suspension functor. For unexplained representation-theoretic and derived category terminology, we
refer to \cite{AuslanderReitenSmalo1995, Happel:1988} and \cite{Ringel:1984}.

\bigskip
\section{Preliminary}

\subsection{The category of coherent sheaves $\coh\X$}

Let $k$ be an algebraically closed field and $\X=\X(p_1, p_2, p_3)$ be a weighted projective line over $k$ of weight type $(p_1, p_2, p_3)$, where $p_i\geq 2$ is an integer for each $i$.
Following \cite{Geigle:Lenzing:1987}, the category $\coh\X$ of coherent sheaves on $\X$ is obtained by applying Serre's construction to the graded triangle singularity $x_1^{p_1}+x_2^{p_2}+x_3^{p_3}$.

More precisely, the Picard group of $\X$ is naturally isomorphic to the rank one abelian group $\bl=\bl(p_1,p_2,p_3)$ on generators $\vx_1,\vx_2,\vx_3$ subject to the relations $p_1\vx_1=p_2\vx_2=p_3\vx_3:=\vc$, where $\vc$ is called the \emph{canonical element}. The coordinate algebra
\begin{equation}\label{Coordinate algebra S}S:=k[x_1,x_2,x_3]/(x_1^{p_1}+x_2^{p_2}+x_3^{p_3})\end{equation}
of $\X$ is then $\LL$-graded with $\deg(x_i)=\vx_i$ for each $i$. A quick way to arrive at the category $\coh\X$ of coherent sheaves on $\X$ is by forming the Serre quotient $$\coh\X:=\frac{\text{mod}^{\LL}S}{\text{mod}^{\LL}_0S},$$ where $\text{mod}^{\LL}S$ (\emph{resp.} $\text{mod}^{\LL}_0S$) is the category of finitely generated $\LL$-graded modules over $S$ (\emph{resp.} those of finite length).

The category $\coh\X$ satisfies the Serre duality of the form $$D\Ext{1}{}{X}{Y}\cong\Hom{}{Y}{X(\vec{\omega})}$$ functorially in $X,Y\in\coh\X$,
where $\vw=\vc-\sum_{i=1}^3\vx_i$ is called the \emph{dualizing element} in $\LL$. This implies
the existence of almost split sequences in $\coh\X$ with the
Auslander--Reiten translation $\tau$ given by the degree shift with
$\vec{\omega}$.
By $\vect\X$ we denote the full subcategory of the category $\coh\X$ formed by all vector bundles, that is, all locally free sheaves.

Denote by $p={\rm l.c.m.}(p_1,p_2,p_3)$. There is a group homomorphism $\delta\colon \bl(p_1,p_2,p_3)\rightarrow \mathbb{Z}$ given by $\delta(\vec{x}_i)=\frac{p}{p_i}$ for $1\leq i\leq 3$.
The complexity of $\coh\X$ (or $\vect\X$) with regard to the classification of indecomposable objects is largely determined by the \emph{(orbifold) Euler characteristic} of $\X$, given by the expression $$\chi_{\X}=-\frac{1}{p}\delta(\vw)=2-\sum_{i=1}^3(1-\frac{1}{p_i}).$$


Each element $\vx$ in $\LL$ has the normal form $\vx=\sum_{i=1}^3l_i\vx_i+l\vc$, where $0\leq l_i\leq p_i-1$ for each $i$ and $l\in\ZZ$. Up to isomorphisms, the line bundles in $\coh\X$ are given by the system $\mathcal{L}$ of twisted structure sheaves $\co(\vx)$ with $\vx\in\bl$.
The Auslander--Reiten translation $\tau$ acts on $\mathcal{L}$, and the index $[\bl:\Z\vw]$ counts the number of $\tau$-orbits of line bundles.

There is a canonical tilting sheaf $T_{\text{can}}=\bigoplus_{0\leq\vx\leq \vc}\co(\vx)$ in $\coh\X$, whose endomorphism algebra is isomorphic to the canonical algebra $C(p_1,p_2,p_3)$ in the sense of Ringel \cite{Ringel:1984}. Consequently, the Grothendieck group $K_0(\X)$ of $\coh\X$ is the free abelian group generated by the isomorphism classes $[\co(\vx)]$ for $0\leq \vx\leq \vc$. The \emph{determinant} function on $K_0(\X)$ (or on $\coh\X$) is defined via $\det(\co(\vx))=\vx$, while the \emph{degree} function is determined by $\deg(\co(\vx))=\delta(\vx)$ and the \emph{rank} function is determined by $\text{rank}(\co(\vx))=1$, for any $\vx\in\mathbb{L}$.

\subsection{$\mathcal{L}$-exact structure on $\vect\X$}

 By \cite{Kussin:Lenzing:Meltzer:2013adv}, the category $\CM^{\mathbb{L}}S$ of maximal Cohen--Macaulay $\mathbb{L}$-graded modules over $S$ is equivalent to the category $\vect\X$ of vector bundles, which induces a distinguished exact structure on $\vect\X$. More precisely, an exact sequence
 $\xymatrix{0\ar[r]& E_1\ar[r]& E_2\ar[r]& E_3\ar[r]& 0}$ in $\vect\X$ is called \emph{distinguished exact} if $$\xymatrix{0\ar[r]&\Hom{}{L}{E_1}\ar[r]&\Hom{}{L}{E_2}\ar[r]&\Hom{}{L}{E_3}\ar[r]& 0}$$ is an exact sequence for any line bundle $L$.

Recall that an exact category in the sense of Quillen is called a \emph{Frobenius category} if it has enough projective objects and injective objects, and the projectives and the injectives coincide.
Under the distinguished exact structure, $\vect\X$ becomes a Frobenius category with indecomposable projective-injectives the system of all line bundles $\mathcal{L}$.
Therefore, the corresponding stable category $\svect\X:=\vect\X/[\mathcal{L}]$ is a triangulated category, see \cite{Happel:1988}. Here, $[\mathcal{L}]$ denotes the two-sided ideal of morphisms in $\vect\X$ factoring through $\text{add}(\mathcal{L})$.

\subsection{$\tau^{\ZZ}\co$-exact structure on $\vect\X$}

For $\chi_{\X}\neq 0$, by restricting the grading of $S$ to the subgroup $\ZZ\vw$, we obtain a graded surface singularity $R=\bigoplus_{n\in\ZZ}R_{n}$, where $R_{n}=S_{n\vw}$, see \cite{Geigle:Lenzing:1991} for details. This algebra $R$ turns out to be isomorphic--as a $\ZZ$-graded algebra--either to the algebra of invariants of the action of a binary polyhedral group on the polynomial algebra $\CC[X,Y]$ (for $\chi_{\X}> 0$), or else to the algebra of entire automorphic forms with regard to an action of a Fuchsian group of the first kind on the upper complex half plane (for $\chi_{\X}< 0$), see for instance \cite{Lenzing:1994}.

The category $\CM^{\ZZ}R$ of maximal Cohen--Macaulay $\ZZ$-graded modules over $R$ is equivalent to the category $\vect\X$ of vector bundles, which induces a $\tau^{\ZZ}\co$-exact structure on $\vect\X$. Here, $\co:=\co_{\X}$ denotes the structure sheaf over $\X$, $\tau$ is the Auslander-Reiten translation of $\coh\X$, and $\tau^{\ZZ}\co$ denotes the $\tau$-orbit of $\co$. More precisely, an exact sequence $\xymatrix{0\ar[r]& E_1\ar[r]& E_2\ar[r]& E_3\ar[r]& 0}$ in $\vect\X$ is called \emph{$\tau^{\ZZ}\co$-exact} if $$\xymatrix{0\ar[r]&\Hom{}{\tau^{n}\co}{E_1}\ar[r]&\Hom{}{\tau^{n}\co}{E_2}\ar[r]&
\Hom{}{\tau^{n}\co}{E_3}\ar[r]& 0}$$ is an exact sequence for any $n\in\ZZ$.

Under $\tau^{\Z}\co$-exact structure, $\vect\X$ also becomes a Frobenius category, whose indecomposable projective-injectives are the $\tau$-orbit $\tau^{\Z}\co$ of the structure sheaf.
Therefore, the corresponding stable category $\zvect\X:=\vect\X/[\tau^{\Z}\co]$ is also a triangulated category.

\subsection{Nakayama algebra and Nakayama category}\label{Nakayama categories}

A finite dimensional algebra over $k$ is called a \emph{Nakayama algebra} if its indecomposable projective or injective modules are uniserial, that is, have a unique composition series.
A very natural class of such algebras is formed by the algebras $N_n(r)$, given as the path algebra of the equioriented quiver
$$\xymatrix{
  1& \ar[l]_-{x}  2&  \ar[l]_-{x}  3&  \ar[l]_-{x}  \cdots  &\ar[l]_-{x}  n-1& \ar[l]_-{x}  n  }$$
of type $\mathbb{A}_n$ subject to all relations $x^r=0$.
The representation theory of the algebras $N_n(r)$ is well-understood. This changes drastically if one asks for a description of the bounded derived category $\Nak{n}{r}:=\Der{N_n(r)}$, which is called a \emph{Nakayama category} in this paper. The following four types of Nakayama categories will be considered later.

For a triple $a,b,c$, the symbols $[a,b,c],(a,b,c),\langle a,b,c]$ and $\langle a,b,c\rangle$ each denotes the derived equivalence type of some triangulated category, called of \emph{module type}, of \emph{sheaf type}, of \emph{Fuchsian type} or of \emph{triangle type}, respectively.

\begin{itemize}
  \item [-] $[a,b,c]$: denotes the type of the bounded derived category of the hereditary star quiver with three branches of length $a,b,c$ respectively.
  \item [-] $(a,b,c)$: denotes the type of the bounded derived category of coherent sheaves $D^b(\coh\X)$ for the weighted projective line $\X:=\X(a,b,c)$.
  \item [-] $\langle a,b,c]$: denotes the type of the stable category of vector bundles $\zvect\X$ for the weighted projective line $\X:=\X(a,b,c)$.
  \item [-] $\langle a,b,c\rangle$: denotes the type of the stable category of vector bundles $\svect\X$
      for the weighted projective line $\X:=\X(a,b,c)$.
\end{itemize}

We say that a Hom-finite triangulated category $\Tt$ is \emph{derived hereditary} if $\Tt$ is triangle equivalent to the category $\Der{\mmod{H}}$, where $H$ is a finite dimensional hereditary $k$-algebra. More generally, we say that $\Tt$ is \emph{piecewise hereditary} if there exists a Hom-finite hereditary abelian $k$-category $\Hh$ such that $\Tt$ is triangle equivalent to the category $\Der{\Hh}$. By a result of Happel~\cite{Happel:2001}, a piecewise hereditary $k$-category $\Tt$ with a tilting object is derived hereditary or is triangle equivalent to the bounded derived category $\Der{\coh\XX}$ of coherent sheaves on a weighted projective line. Piecewise hereditary categories with a tilting object, equivalently piecewise hereditary $k$-algebras, have been investigated, for instance, in \cite{Happel:Seidel:2010,Happel:Zacharia:2008,Happel:Zacharia:2010}.

\subsection{One-point extension of an algebra}

We recall the concept of a one-point extension of an algebra $\Lambda$. Let $M\in\mmod\Lambda$. Then the one-point extension of $\Lambda$ by $M$ is
denoted by $\Lambda [M]$ and defined as the triangular matrix ring
$$\Lambda [M]=\left[\begin{array}{cc}\Lambda&M\\
0&k\end{array}\right],$$
with multiplication given by
$$\left( \begin{array}{cc}\lambda&\alpha\\ 0&c \end{array}\right)
\left( \begin{array}{cc}\lambda' &\alpha' \\ 0&c' \end{array} \right)
=\left( \begin{array}{cc}\lambda\lambda'&\lambda \alpha'+\alpha c\\ 0&cc' \end{array} \right)$$
for $\lambda,\lambda'\in\Lambda, \alpha,\alpha'\in M$ and $c,c'\in k$. For details we refer to \cite{Ringel:1984}.

We emphasize that a derived equivalence between two algebras can be extended to one-point extensions, see \cite{Barot-Lenzing:2003}. More precisely, assume there is an equivalence $\xymatrix{D^b(\Lambda)\ar[r]^-{\simeq}&  D^b(\Lambda'),}$ which sends $M\in\mmod\Lambda$ to $M'\in\mmod\Lambda'$, then $D^b(\Lambda[M])\simeq  D^b(\Lambda'[M'])$.
As applications to Nakayama algebras, we obtain that $N_{n+1}(r)=N_{n}(r)[I_{n+2-r}]$ is derived equivalent to $N_{n}(r)[P_{n+2-r}]$, where $I_{n+2-r}$ (resp. $P_{n+2-r}$) is the projective (resp. injective) $N_{n}(r)$-module corresponding to the vertex $n+2-r$, see for example \cite{Happel:Seidel:2010}.

The one-point extension of canonical algebras is an important subject in this paper. Let $P$ be an indecomposable projective module over a canonical algebra $C=C(p_1,p_2,p_3)$.
The one-point extension $A = C[P]$ is
called an \emph{extended canonical algebra}. It does not matter which indecomposable projective one takes for the one-point extension. These algebras all happen to be derived equivalent. Moreover, their bounded derived categories are equivalent to the stable category $\zvect\X$ for the weighted projective line $\X=\X(p_1,p_2,p_3)$ when $\frac{1}{p_1}+\frac{1}{p_2}+\frac{1}{p_3}< 1$.

\subsection{Perpendicular category}

Recall from \cite{Simson-Skowronski:2007} that for any $\Lambda$ module $M$, the full subcategory
$$M^{\perp}:=\{X\in\mmod\Lambda\,|\,\Hom{\Lambda}{M}{X}=0=\Ext{1}{\Lambda}{M}{X}\}$$
of $\mmod\Lambda$ is called the \emph{right perpendicular category} of $M$. For example, the right perpendicular category $P_n^{\perp}$ of $\mmod N_{n}(r)$ with respect to the last indecomposable projective module $P_n$ is equivalent to $\mmod N_{n-1}(r)$. We refer to \cite{Geigle:Lenzing:1991} for the perpendicular category of a hereditary abelian category.

For a triangulated Hom-finite $k$-category $\Tt$, we have the right perpendicular category $\rperp{\{E_1,\ldots,E_r\}}$ with regard to an exceptional sequence $(E_1,\ldots,E_r)$ in $\Tt$, which is a triangulated subcategory of $\Tt$. The full embedding $\rperp{\{E_1,\ldots,E_r\}}\incl \Tt$ then has a left adjoint, see~\cite{Bondal:Kapranov:1989}.

Recall that there is a canonical tilting sheaf in $\coh\X$ consisting of line bundles for $\X=\X(p_1,p_2,p_3)$, whose endomorphism algebra is isomorphic to the canonical algebra $C=C(p_1,p_2,p_3)$. Therefore, we have an equivalence $D^b(\coh\X)\simeq  D^b(\mmod C)$. Since the group $\LL$ acts on line bundles transitively, the right perpendicular category of $D^b(\coh\X)$ with respect to any line bundle is equivalent to bounded derived category of the hereditary star quiver with three branches of length $p_1,p_2,p_3$ respectively, hence of type $[p_1,p_2,p_3]$.



\subsection{Coxeter polynomial}\label{Coxeter polynomial section}

Let $\{P(i)|1\leq i\leq n\}$ be a complete set of representatives from the isomorphism classes of the indecomposable projective $\Lambda$-modules.
The \emph{Cartan matrix} $C=C_{\Lambda}$ is defined to be the integer-valued $n\times n$-matrix with entries $$c_{ij}=\textup{dim}_k\textup{Hom}_{\Lambda}(P(i),P(j)).$$
The \emph{Coxeter matrix} $\Phi_{\Lambda}$ is defined to be the matrix $$\Phi_{\Lambda}=-C^{-t}C.$$
The characteristic polynomial $\chi_{\Lambda}$ of $\Phi_{\Lambda}$: $$\chi_{\Lambda}=\textup{det}(\lambda E_{n}-\Phi_{\Lambda})$$ is called the \emph{Coxeter polynomial} of $\Lambda$, where $E_{n}$ is the identity matrix of size $n\times n$.
The minimal number $m$ such that $\Phi_{\Lambda}^m=E_{n}$ is called the \emph{Coxeter number} of $\Lambda$.

The Coxeter polynomial plays an important role for understanding the structure of the module category and its bounded derived category for an algebra. We refer to \cite{Ringel:1994, Happel:1997, Lenzing:1999, Lenzing:Pena:2008} for properties of Coxeter polynomials.

For the Nakayama algebra $N_n(r)$, we denote its Cartan matrix by $\Phi_{(n,r)}$ and its Coxeter polynomial by $\chi_{(n,r)}$. Then by direct computations, we have
\begin{itemize}
  \item[-] $\chi_{(17,8)}=(\lambda+1)(\lambda^{16}+\lambda^{8}+1)$;
  \item[-] $\chi_{(16,3)}=(\lambda+1)(\lambda^{6}-\lambda^{3}+1)(\lambda^{9}+1)$;
  \item[-] $\chi_{(15,6)}=\chi_{(15,4)}=(\lambda+1)(\lambda^{8}+\lambda^{4}+1)(\lambda^{6}+1)$;
  \item[-] $\chi_{(15,5)}=(\lambda+1)(\lambda^4+1)(\lambda^5+1)^2$;
  \item[-] $\chi_{(14,7)}=(\lambda+1)(\lambda^{6}-\lambda^{3}+1)(\lambda^{7}+1)$.
  \end{itemize}
Consequently, the Coxeter numbers for $N_{(17,8)}, N_{(16,3)}, N_{(15,6)}, N_{(15,5)}, N_{(15,4)}, N_{(14,7)}$
are given by $24, 18, 12, 40, 12, 126$ respectively.

\section{Nakayama categories associated to a tilting complex}

In this section, we first construct a tilting complex in the category $\coh\X$ of coherent sheaves of weight type $(2,3,r)$ with $r\geq 4$, whose endomorphism algebra is isomorphic to the Nakayama algebra $\nak{r+4}{r}$. Then by the technique of one-point extension, we show that the Nakayama algebras $\nak{r+5}{r}\, (r\geq 7)$ and $\nak{r+6}{r}\, (r\geq 6)$ are of Fuchsian type; while by considering the perpendicular subcategories, we obtain that $\nak{r+3}{r}$ and $\nak{r+2}{r}$ for $r\geq 3$ are of module type.



The following result is essential in this section. Recall that the Serre functor of $\Der{\coh\XX}$ is denoted by $\bS=\tau[1]$.


\begin{proposition}\label{tilting realization}
Let $\XX$ be a weighted projective line of weight type $(2,3,r)$ with $r\geq4$.
Then the system
 \begin{align}
\begin{split}\label{useful tilting complex}
\bS^{-1}(&\Oo((r-4)\vx_3))\to \bS^{-1}(\Oo((r-3)\vx_3))\to \bS^{-1}(S_{20})\to\\ &\Oo\to\Oo(\vx_3)\to
\cdots\to\Oo((r-4)\vx_3)\to\Oo((r-3)\vx_3)\to S_{20}\to \bS\Oo\to
\bS(\Oo(\vx_3))
 \end{split}
 \end{align}
 forms a tilting complex in $\Der{\coh\XX}$ whose endomorphism algebra is isomorphic to the Nakayama algebra $\nak{r+4}{r}$.
Consequently, $\nak{r+4}{r}$ is derived equivalent to $\coh\XX$.
\end{proposition}

\begin{proof}
Set $$Y_0=\bigoplus\limits_{k=0}^{r-3}\Oo(k\vx_3),\quad Y_1=\bS(\Oo\oplus
\Oo(\vx_3)),\quad Y_{-1}=\bS^{-1}(\Oo((r-4)\vx_3)\oplus \Oo((r-3)\vx_3)).$$ We only need to show that $$T=Y_{-1}\oplus \bS^{-1}(S_{20})\oplus Y_0\oplus S_{20}\oplus Y_1$$ forms a tilting complex in $\Der{\coh\XX}$.

For any line bundle $L$ and any element $\vy\in\bl$,
it is well-known that $L\oplus L(\vy)$ is extension-free if and only if $-\vc\leq \vy\leq \vc$, hence each $Y_i$ is extension-free for $-1\leq i\leq 1$.
For $0\leq a<b\leq r-3$, by Serre duality we have $$\Hom{D^b(\coh\X)}{\co(a\vx_3)}{\bS(\co(b\vx_3))}=D\Hom{D^b(\coh\X)}{\co(b\vx_3)}{\co(a\vx_3)}=0.$$ Moreover, since $\coh\X$ is a hereditary category, we have $\Hom{D^b(\coh\X)}{X}{\bS^kY}=0$ for any $k\geq 2$ and $X,Y\in\coh\X$. Then by using Lemma \ref{extension-free formula for tau^k U[k]} and Lemma \ref{extension-free formula for tau^k U[k]: general case}, we obtain that $T$ is extension-free.

Note that the number of indecomposable direct summands of $T$ coincides with the rank of the Grothendieck group of $\Der{\coh\XX}$, hence $T$ is a tilting complex in $\Der{\coh\XX}$. It is straightforward to show that the endomorphism algebra of $T$ is isomorphic to $\nak{r+4}{r}$. Then the proof is finished.
\end{proof}

\begin{remark}\label{identity}
Let $\Psi$ be the derived equivalence induced by the tilting complex $T$ in \eqref{useful tilting complex}:
\begin{equation}\label{phi}\xymatrix{\Psi:\Der{\coh\XX(2,3,r)}\ar[r]^-{\simeq}& \Der{\nak{r+4}{r}}.}
\end{equation}
Then each indecomposable projective $\nak{r+4}{r}$-module $P_i$ corresponds to the direct summand $T_i$ of $T$, and each indecomposable injective module $I_i=\bS(P_i)$ corresponds to $\bS(T_i)$. By using projective resolution, we obtain that the simple $\nak{r+4}{r}$-module $S_{i}$ (for $5\leq i\leq r$) corresponds to the torsion sheaf $S_{3,i-4}$ in $\coh\XX(2,3,r)$.

Moreover, the functions defined on $\Der{\coh\XX(2,3,r)}$ or $K_0(\coh\X)$, such as rank and degree functions, can take values for $\nak{r+4}{r}$-modules in a natural way. For example, the rank of projective or injective $\nak{r+4}{r}$-modules are given by
$${\rm{rank}}(P_i)=-{\rm{rank}}(I_i)=
         \left\{\begin{array}{lll}
         -1, && i=1,2,r+3,r+4;\\
                     0, && i=3,r+2;\\
             1, && 4\leq i\leq r+1;
             \end{array} \right.$$
while the rank data for the simple modules are $${\rm{rank}}(S_i)=
         \left\{\begin{array}{lll}
         -1, && i=1,r+1,r+2;\\
          1, && i=3,4,r+4;\\
          0, && {\text{otherwise}}.
             \end{array} \right.$$
\end{remark}


As a consequence, we can obtain the following result due to Happel and Seidel.

\begin{corollary}[\cite{Happel:Seidel:2010}]\label{familis Nakayama corollary}
The following statements hold.
\begin{itemize}
\item[(1)] For $r\geq 3$, $\nak{r+3}{r}$ is derived hereditary of type $\hstar{2,3,r}$.
\item[(2)] For $r\geq 3$, $\nak{r+2}{r}$ is derived hereditary of type $\hstar{2,3,r-1}$.
\item[(3)] For $r\geq 7$, $\nak{r+5}{r}$ has Fuchsian type $\fuchs{2,3,r}$.
\end{itemize}
\end{corollary}

\begin{proof}
For the first two statements, we recall that there exists a tilting complex $\widetilde{T}_{\rm{can}}=\bigoplus_{0\leq \vx\leq \vc}\bS(\Oo(\vx-\vc+\vx_3))$ obtained from the canonical tilting sheaf $T_{\rm{can}}=\bigoplus_{0\leq \vx\leq \vc}\Oo(\vx)$ by degree shift and suspension shift.
Note that $\widetilde{T}_{\rm{can}}$ and the tilting complex in Proposition \ref{tilting realization} share the same last two indecomposable direct summands. Therefore, by taking the right perpendicular category for the last member $\bS(\Oo(\vx_3))$, we obtain that $\nak{r+3}{r}$ is derived hereditary of type $\hstar{2,3,r}$; while considering the right perpendicular category for the last two members $\bS\Oo\oplus \bS(\Oo(\vx_3))$ we obtain that
$\nak{r+2}{r}$ is derived hereditary of type $\hstar{2,3,r-1}$.

For (3), observe that $\nak{r+5}{r}=\nak{r+4}{r}[I_6]$ is derived equivalent to $\nak{r+4}{r}[P_6]$ by \cite{Happel:Seidel:2010}, where $I_6$ (\emph{resp}. $P_6$) is the injective (\emph{resp}. projective) $\nak{r+4}{r}$-module attached to the vertex 6. 
By Proposition \ref{tilting realization}, $P_6$ corresponds to the line bundle $\co(2\vx_3)$ in $\coh\XX(2,3,r)$ under the equivalence $\Psi$ in Remark \ref{identity}. Then by \cite{Lenzing:Pena:2011}, $\nak{r+5}{r}$ has Fuchsian type $\fuchs{2,3,r}$.
 \end{proof}

\begin{remark}\label{why r geq 7}
During the proof of the statement (3) in the above corollary, we obtain that the projective $\nak{r+4}{r}$-module $P_6$ corresponds to a line bundle (i.e., $\co(2\vx_3)$), and then $\nak{r+5}{r}$ is equivalent to the extended canonical algebra of $C(2,3,r)$, which only requires the condition $r\geq 5$.

But only when $r\geq 7$, the extended canonical algebra of $C(2,3,r)$ has Fuchsian type $\fuchs{2,3,r}$. In fact,
for $r=5$, $\nak{10}{5}$ is derived equivalent to the extended canonical algebra of $C(2,3,5)$, which has module type $[2,3,7]$ by \cite[Proposition 3.3]{Lenzing:Pena:2011}; while for $r=6$, $\nak{11}{6}$ is derived equivalent to the extended canonical algebra of $C(2,3,6)$, which has sheaf type $(2,3,7)$ by \cite[Proposition 3.5]{Lenzing:Pena:2011}.
\end{remark}

By using the technique of one-point extension for $\nak{r+4}{r}$ twice, we find a new family of Nakayama algebras of Fuchsian type.

\begin{proposition}\label{N-r+6-r}
For $r\geq 6$, $\nak{r+6}{r}$ has Fuchsian type $\fuchs{2,3,r+1}$.
\end{proposition}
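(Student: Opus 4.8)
The plan is to mimic the proof of Corollary~\ref{familis Nakayama corollary}(1), pushing the one-point extension technique one step further, but now over the \emph{Fuchsian} algebra $\nak{r+5}{r}$ rather than over a canonical one. First I would apply the identity $N_{n+1}(r)=N_n(r)[I_{n+2-r}]$ with $n=r+5$ to write
$$\nak{r+6}{r}=\nak{r+5}{r}[I_7],$$
where $I_7$ is the indecomposable injective $\nak{r+5}{r}$-module with socle $S_7$ and top $S_{r+5}$; equivalently $I_7=\mathrm{rad}\,P_{r+6}$, uniserial of length $r-1$. The hypothesis $r\geq7$ guarantees, via Corollary~\ref{familis Nakayama corollary}(1), that the base $\nak{r+5}{r}$ is genuinely of Fuchsian type $\fuchs{2,3,r}$, so it is derived equivalent to the extended canonical algebra $C(2,3,r)[P]$, whose bounded derived category is the stable category $\zvect\X$ for $\X=\X(2,3,r)$. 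Since a derived equivalence carries over to one-point extensions, it suffices to identify the object $X$ corresponding to $I_7$ under this equivalence and to prove that the resulting algebra $C(2,3,r)[P][X]$ is derived equivalent to the extended canonical algebra of type $(2,3,r+1)$, i.e.\ of Fuchsian type $\fuchs{2,3,r+1}$.

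To pin down $X$ I would again exploit the degree--rank bookkeeping of Remark~\ref{identity}. Recalling that the first extension $\nak{r+5}{r}=\nak{r+4}{r}[I_6]$ attaches the line bundle $I_6$ (with $\mathrm{rank}(I_6)=1$), I would compute the rank, degree and determinant of the second extending object $X$ from the combinatorics of the tilting complex of Proposition~\ref{tilting realization} and the adjacency of the newly added vertices $r+5$ and $r+6$. I expect $X$ to be (the stable class of) a line bundle as well, but one whose $\tau$-orbit differs essentially from that of $I_6$. This is the conceptual crux: although the twist autoequivalences indexed by $\bl$ act transitively on the line bundles of $\coh\X$, they do \emph{not} descend to $\zvect\X$, since they fail to preserve the ideal $[\tau^{\ZZ}\Oo]$; hence over the once-extended base the line bundles break into genuinely inequivalent classes, and the class of $X$ is the one that will lengthen the third arm.

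The decisive and hardest step is then to establish the comparison $C(2,3,r)[P][X]\simeq C(2,3,r+1)[P']$ at the level of bounded derived categories. Both algebras have $r+6$ vertices, so the content is purely that this particular second extension raises the third weight from $r$ to $r+1$, in contrast with the extension of the plain canonical algebra $C(2,3,r)$ by any line bundle, which can only yield $\fuchs{2,3,r}$. I would make this precise by using the rank and degree of $X$ to locate it inside $\zvect\X(2,3,r+1)$, and then verify the comparison by a tilting/mutation argument realizing $\nak{r+6}{r}$ as the endomorphism algebra of a configuration of line bundles in $\zvect\X(2,3,r+1)$ whose quiver is $A_{r+6}$ with relations $x^r=0$. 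The genuine obstacle will be the required $\mathrm{Hom}$- and $\mathrm{Ext}$-vanishing between $X$ and the remaining summands; as in Proposition~\ref{tilting realization} I would reduce it to Serre duality in $\coh\X$ together with the hereditary vanishing $\Hom{\Der{\coh\X}}{Y}{\bS^k Z}=0$ for all $k\geq2$.
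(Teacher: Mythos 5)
Your reduction $\nak{r+6}{r}=\nak{r+5}{r}[I_7]$ is correct and you have correctly located the difficulty, but the step you defer --- establishing $C(2,3,r)[P][X]\simeq C(2,3,r+1)[P']$ --- is precisely the content of the proposition, and your proposal contains no mechanism for it. There is no cited result describing one-point extensions of \emph{extended} canonical algebras: \cite{Lenzing:Pena:2011} only says that extending the canonical algebra $C(2,3,r)$ itself by a rank-one object yields $\fuchs{2,3,r}$, and it gives no control over a second extension performed over the already-Fuchsian base. In particular, nothing in your outline explains why the third weight should increase from $r$ to $r+1$: the rank/degree bookkeeping of Remark~\ref{identity} identifies the class of $X$ in the Grothendieck group, but by itself it cannot produce the required derived equivalence, and the ``tilting/mutation argument in $\zvect\X(2,3,r+1)$'' that would close the gap is exactly the kind of construction the paper needs a whole section (Theorem~\ref{tiltingobjects} and Appendix~A) to carry out case by case for the sporadic weights. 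As stated, the decisive step is named but not proved.

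The paper sidesteps this by reordering the two extensions instead of stacking them. Starting from the injective tilting module over $\nak{r+6}{r}$, one performs an APR tilt (replacing $I_{r+6}=S_{r+6}$ by $\tau I_{r+6}=S_{r+5}$) and peels off summands to write $\End{}{T_{\mathrm{APR}}}\cong\bigl(\nak{r+4}{r}[S_{r+5}]\bigr)[I_{r+5}]$. The inner extension is by the \emph{simple} module at vertex $6$, which under the identification of Remark~\ref{identity} has rank $0$ and determinant $\vx_3$, i.e.\ it is a simple torsion sheaf concentrated at the exceptional point $(x_3)$; extending by such a sheaf is the standard weight insertion and yields an algebra derived equivalent to $\coh{\X(2,3,r+1)}$. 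Only \emph{then} does one extend by $I_{r+5}$, which has rank $1$ over the new base, so \cite{Lenzing:Pena:2011} applies directly and gives $\fuchs{2,3,r+1}$. The moral is that the weight increase must be engineered \emph{before} leaving the canonical/sheaf level; if you insist on extending the Fuchsian algebra $\nak{r+5}{r}$ directly, you must first prove the missing comparison theorem, which your proposal only asserts.
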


\begin{proof}
During the proof, we denote by $A=N_{r+6}(r)$ for convenience.

Obviously, the injective module $\bigoplus_{i=1}^{r+6}I_i$ is a tilting module in $\mmod A$. Note that $I_{r+6}$ and $\tau I_{r+6}$ are both simple $A$-modules.
Hence, we obtain an APR tilting $A$-module $$T=(\bigoplus_{i=1}^{r+5}I_i)\oplus \tau I_{r+6}.$$
Let $\Gamma=\End{A}{T}$. Then we have $D^b(A)\simeq D^b(\Gamma)$.
Set $$T'=T\backslash{I_{r+5}}=(\bigoplus_{i=1}^{r+4}I_i)\oplus \tau I_{r+6} \quad\text{and}\quad T''=T'\backslash {\tau I_{r+6}}=\bigoplus_{i=1}^{r+4}I_i.$$ Denote by
$\Gamma'=\End{A}{T'}$ and $\Gamma''=\End{A}{T''}$, respectively. Then we have
$\Gamma=\Gamma'[M]$ and $\Gamma'=\Gamma''[N]$, where $M=\Hom{A}{T'}{I_{r+5}}$ and $N=\Hom{A}{T''}{\tau I_{r+6}}$.

Observe that $\Gamma''\cong N_{r+4}(r)$, which yields an equivalence
$$\xymatrix{\Phi'':D^b(\Gamma'') \ar[r]^-{\simeq}&  \Der{\coh\XX(2,3,r)},}$$ sending projective $\Gamma''$-modules $\Hom{A}{T''}{I_i}$ (for $1\leq i\leq r+4$) to the indecomposable direct summands of the tilting complex \eqref{useful tilting complex}.

For $1\leq i\leq r+4$, $\Hom{A}{I_i}{\tau I_{r+6}}\neq 0$ if and only if $i=6$. Hence $N$ is a simple $\Gamma''$-module (associated to the vertex $6$), which corresponds to the torsion sheaf $S_{3,2}$ in $\coh\XX(2,3,r)$ under $\Phi''$ by Remark \ref{identity}. Consequently, we obtain an equivalence $$\xymatrix{\Phi':D^b(\Gamma')=D^b(\Gamma''[N])\ar[r]^-{\simeq}& \Der{\coh\XX(2,3,r+1)}.}$$ Moreover, we have the following commutative diagram:
$$\xymatrix{D^b(\Gamma'')\ar[r]^-{\iota_1}\ar[d]_-{\Phi''}^-{\simeq}& D^b(\Gamma')\ar[d]_-{\simeq}^-{\Phi'}\\
 \Der{\coh\XX(2,3,r)}\ar[r]^-{\iota_2}& \Der{\coh\XX(2,3,r+1)},}$$
where $\iota_1$ (\emph{resp.} $\iota_2$) is the natural embedding induced by the one-point extension w.r.t $N$ (\emph{resp.} $S_{3,2}$).

Let $M'=\Hom{A}{T''}{I_{r+5}}\in\mmod \Gamma''$. Clearly, we have $\iota_1(M')=M$.
Note that
\begin{equation}\label{Hom Ii to I r+5}
 \Hom{A}{I_i}{I_{r+5}}=
  \left\{\begin{array}{lll}
    0, && 1\leq i\leq 5;\\
    k, && 6\leq i\leq r+4.
  \end{array} \right.
\end{equation}
Hence, the Jordan-H\"older factors of the $\Gamma''$-module $M'$ are the simple modules $S_{i}$ for $6\leq i\leq r+4$. It follows from Remark \ref{identity} that  $${\rm{rank}}(M')=\sum\limits_{i=6}^{r+4}{\rm{rank}}(S_{i})=-1.$$
Combining with \eqref{Hom Ii to I r+5}, we know that $\Phi''(M')=L'[1]$ for some line bundle  $L'\in\coh\XX(2,3,r)$.

By \cite{Geigle:Lenzing:1991} we know that $\iota_2$ is rank-preserving. Therefore, $L:=\iota_2(L')$ is a line bundle in $\coh\XX(2,3,r+1)$, and $$\Phi'(M)=\Phi'\circ\iota_1(M')=\iota_2\circ\Phi''(M')=L[1]\in D^b(\coh\XX(2,3,r+1)).$$
Let $\widetilde{T}_{\rm{can}}=\bigoplus_{0\leq \vx\leq \vc}L(\vx)[1]$. Then $\widetilde{T}_{\rm{can}}$ is a tilting complex in $D^b(\coh\XX(2,3,r+1))$, whose endomorphism algebra is isomorphic to the canonical algebra $C(2,3,r+1)$. Moreover, $\widetilde{T}_{\rm{can}}$ induces an equivalence:
$$\xymatrix{D^b(\Gamma') \ar[r]^-{\Phi'}_-{\simeq}& \Der{\coh\XX(2,3,r+1)}\ar[rr]_-{\simeq}^-{R\Hom{A}{\widetilde{T}_{\rm{can}}}{-}}&&D^b(C(2,3,r+1)),}$$
sending $M\in\mmod \Gamma'$ to the (unique) simple projective $C(2,3,r+1)$-module.
Then by \cite{Lenzing:Pena:2011}, we obtain that $\Gamma\cong\Gamma'[M]$ has Fuchsian type $\langle 2,3,r+1]$.

This finishes the proof.

%
%
%
\end{proof}

\begin{remark} In order to show $\nak{r+6}{r}$ have Fuchsian type $\fuchs{2,3,r+1}$, it is natural to view $\nak{r+6}{r}$ as a one-point extension algebra $B[M]$ for some algebra $B$ and some $M\in\mmod B$, where $B$ is derived equivalent to the category of coherent sheaves over the weighted projective line $\XX$ of weight type $(2,3,r+1)$, and $M$ corresponds to a line bundle in $\coh\XX$.
In other words, we need to form a perpendicular category $E^{\perp}$ in $D^b(\nak{r+6}{r})$ with respect to an indecomposable object $E$, such that $E^{\perp}\simeq D^b(\coh\XX)$.

The first candidate is the perpendicular category of the last projective $\nak{r+6}{r}$-module $P_{r+6}$. However, by Corollary \ref{familis Nakayama corollary}, $P_{r+6}^{\perp}\simeq D^b(N_{r+5}(r))$ has Fuchsian type $\fuchs{2,3,r}$ for $r\geq 7$, which is not of sheaf type. From the proof of Proposition \ref{N-r+6-r} we see that, the perpendicular category $P_{r+5}^{\perp}\simeq I_{r+5}^{\perp}$ of the second last projective module $P_{r+5}$ has sheaf type $(2,3,r+1)$.

In particular, for $r=6$, i.e., in $D^b(\nak{12}{6})$ we have $P_{11}^{\perp}\simeq D^b(N_{11}(6))\simeq P_{12}^{\perp}$, which has sheaf type $(2,3,7)$.
\end{remark}
\section{Classification of Piecewise hereditary Nakayama categories}

In this section, we provide a new proof of the classification result for the Nakayama categories $\Nak{n}{r}:=D^b(\nak{n}{r})$ of piecewise hereditary type, which have been obtained by Happel and Seidel in \cite{Happel:Seidel:2010}.

Let us first introduce the extended version of the Happel-Seidel symmetry, which was first encountered in \cite{Happel:Seidel:2010} and plays an important role in the study of Nakayama algebras.

\begin{proposition}\label{HS-symmetry} Assume $a,b\geq 2$. Put $n=(a-1)(b-1)$. Then
 \begin{itemize}
 \item[(1)] $\Nak{n}{a}\simeq\Nak{n}{b}$;
 \item[(2)] $\Nak{n-1}{a}\simeq\Nak{n-1}{b}$;
 \item[(3)] $\Nak{n+1}{a}\simeq\Nak{n+1}{b}$.
 \end{itemize}
\end{proposition}

\begin{proof}
The first statement has been proved in \cite[Theorem 6.11]{Kussin:Lenzing:Meltzer:2013adv}, by observing that both categories $\Nak{n}{a}$ and $\Nak{n}{b}$ are equivalent to the stable category $\svect\X(2,a,b)$. For the proof of the other two statements, we recall from \cite{Kussin:Lenzing:Meltzer:2013adv} that
\begin{equation}
\label{tilting realization of N(n,a)}
T=\bigoplus\limits_{i=0}^{b-2}\bigoplus\limits_{j=0}^{a-2}\mathbb{S}^{i}(E\langle j\vx_2\rangle)
 \end{equation}is a tilting object in the stable category $\svect\X(2,a,b)$ with endomorphism algebra $N_{n}(a)$, where $\mathbb{S}=\tau[1]$, and $E\langle j\vx_2\rangle$ (for $0\leq j\leq a-2$) is the extension bundle determined by the non-split exact sequence
 $\xymatrix{0\ar[r]&\Oo(\vw)\ar[r]&E\langle j\vx_2\rangle\ar[r]&\Oo(j\vx_2) \ar[r]&0.}$ 

According to \cite[Proposition 6.6]{Kussin:Lenzing:Meltzer:2013adv}, we know that $E\langle (a-2)\vx_2\rangle$ is an Auslander bundle, and $\mathbb{S}(F)=\tau F[1]=F(\vom+\vx_1)$ for any extension bundle $F$. It follows that the last member $\mathbb{S}^{b-2}(E\langle (a-2)\vx_2\rangle)$ of $T$ is an Auslander bundle, hence $\Nak{n-1}{a}$ is equivalent to the right perpendicular category of an Auslander bundle in $\svect\X(2,a,b)$. By exchanging the weights $a$ and $b$ in $(2,a,b)$ (using $\svect\X(2,a,b)\simeq\svect\X(2,b,a)$), we know that $\Nak{n-1}{b}$ is equivalent to the right perpendicular category of $\mathbb{S}^{a-2}(E\langle (b-2)\vx_3\rangle)$, which is also an Auslander bundle. Note that there exists an automorphism (induced by degree shift) of $\svect\X(2,a,b)$ exchanging two given Auslander bundles. It follows that $\Nak{n-1}{a}\simeq\Nak{n-1}{b}$.

For the statement (3), recall that $N_{n+1}(a)=N_{n}(a)[I_{n+2-a}]$, which is derived equivalent to $N_{n}(a)[P_{n+2-a}]$ by \cite{Barot-Lenzing:2003}. Here, $I_{n+2-a}$ (resp. $P_{n+2-a}$) is the injective (resp. projective) $N_{n}(a)$-module corresponding to the vertex $n+2-a$. Moreover, by the explicit realization of $N_{n}(a)$ in $\svect\X(2,a,b)$ as in \eqref{tilting realization of N(n,a)}, $P_{n+2-a}$ is given by the Auslander bundle $\bS^{b-2}E$, (i.e., $j=0$). Similar result holds for $N_{n+1}(b)$, that is, $N_{n+1}(b)$ is derived equivalent to $N_{n}(b)[P_{n+2-b}]$, where
$P_{n+2-b}$ is given by the Auslander bundle $\bS^{a-2}E$.

Note that $\Nak{n}{a}\simeq \svect\X(2,a,b)\simeq\Nak{n}{b}$ by (1), and there exists an automorphism of $\svect\X(2,a,b)$ exchanging two given Auslander bundles. It follows that there exists an equivalence $\xymatrix{\Nak{n}{a}\ar[r]^-{\simeq}& \Nak{n}{b}
}$
sending projective $N_{n}(a)$-module $P_{n+2-a}$ to projective $N_{n}(b)$-module $P_{n+2-b}$.
Then by \cite{Barot-Lenzing:2003}, we obtain $N_{n+1}(a)$ and $N_{n+1}(b)$ are derived equivalent, that is, $\Nak{n+1}{a}\simeq\Nak{n+1}{b}$.
\end{proof}

\begin{lemma}\label{lemma:perp}
Assume that the Nakayama category $\Nak{n}{r}$ is piecewise hereditary. Then also $\Nak{n-1}{r}$ is piecewise hereditary.
\end{lemma}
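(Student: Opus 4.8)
The plan is to exhibit $\Nak{n-1}{r}$ as the right perpendicular category of an exceptional object inside $\Nak{n}{r}$, and then to invoke the fact that the perpendicular category of an exceptional object in a piecewise hereditary category is again piecewise hereditary.

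First I would record the structural input, which holds for \emph{every} pair $(n,r)$ irrespective of the hypothesis. The vertex $n$ is a source of the quiver of $N_n(r)$, so $N_n(r)$ is the one-point extension $N_{n-1}(r)[I_{n+1-r}]$ and $N_{n-1}(r)=N_n(r)/N_n(r)e_nN_n(r)$. The last indecomposable projective $P_n$ is an exceptional object of $\Tt:=\Nak{n}{r}$: it is indecomposable projective, $\End{N_n(r)}{P_n}=e_nN_n(r)e_n=k$ since the quiver has no oriented cycles, and $\Ext{i}{N_n(r)}{P_n}{P_n}=0$ for $i>0$ by projectivity. For any complex $X$ one has $\Hom{\Der{N_n(r)}}{P_n}{X[i]}\cong e_nH^i(X)$, so the triangulated right perpendicular $\rperp{P_n}$ consists precisely of the complexes whose cohomologies lie in the Serre subcategory $\set{M\,|\,e_nM=0}=\mmod N_{n-1}(r)$ (the module-level identity $P_n^{\perp}=\mmod N_{n-1}(r)$ from the preliminaries). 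I would then identify $\rperp{P_n}$ with $\Nak{n-1}{r}$ through the recollement attached to this one-point extension; since $e_nN_n(r)e_n=k$ is semisimple the recollement exists at the bounded derived level and embeds $\Der{\mmod N_{n-1}(r)}$ onto $\rperp{P_n}$, cf. \cite{Bondal:Kapranov:1989, Barot-Lenzing:2003}.

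Now I would bring in the hypothesis. Assume $\Tt\simeq\Der{\Hh}$ with $\Hh$ a Hom-finite hereditary abelian $k$-category, and transport $P_n$ to an exceptional object $E$ of $\Der{\Hh}$. Because $\Hh$ is hereditary, every indecomposable of $\Der{\Hh}$ is a shift of an object of $\Hh$; as $E$ is indecomposable we get $E\cong E_0[m]$ with $E_0\in\Hh$, and a short computation of $\Hom{}{}{}$ and $\Ext{1}{}{}{}$ shows that $E_0$ is exceptional in $\Hh$. Shifting does not affect the perpendicular, so $\rperp{E}=\rperp{E_0}$. By the perpendicular-category theorem of Geigle--Lenzing \cite{Geigle:Lenzing:1991}, the perpendicular $\Hh'$ of $E_0$ taken inside $\Hh$ is again a Hom-finite hereditary abelian category, and the inclusion $\Hh'\incl\Hh$ induces a triangle equivalence $\Der{\Hh'}\xrightarrow{\sim}\rperp{E_0}$. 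Combining the two steps yields $\Nak{n-1}{r}\simeq\rperp{P_n}\simeq\rperp{E_0}\simeq\Der{\Hh'}$ with $\Hh'$ Hom-finite hereditary abelian, which is exactly the assertion that $\Nak{n-1}{r}$ is piecewise hereditary.

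The routine part is the first, structural step; the substantive point is the second. The hard part is precisely the reduction to the heart together with the Geigle--Lenzing machinery: one must know that an exceptional object of $\Der{\Hh}$ lies, up to shift, in $\Hh$ (which uses heredity in an essential way), and that the Geigle--Lenzing perpendicular category stays within the piecewise hereditary class, i.e. remains hereditary abelian and Hom-finite. Everything else is formal bookkeeping with the recollement and the identity $P_n^{\perp}=\mmod N_{n-1}(r)$.
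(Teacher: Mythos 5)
Your proposal is correct and follows the same route as the paper: identify $\rperp{P_n}$ inside $\Nak{n}{r}$ with $\Nak{n-1}{r}$ and then invoke the Geigle--Lenzing perpendicular-category theorem (via \cite{Geigle:Lenzing:1991} and \cite[Theorem 4.2]{Crawley-Boevey:2011}) to see that the perpendicular of an exceptional object in a piecewise hereditary category is again piecewise hereditary. The paper states this in two lines; you have merely filled in the standard details (the reduction of the exceptional object to a shift of an object of the heart, and the recollement identification), all of which are sound.
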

\begin{proof}
The indecomposable projective modules $P_i$ $(1\leq i\leq n)$ over $\nak{n}{r}$ are exceptional objects in $\Nak{n}{r}$. Since the right perpendicular category of $\Nak{n}{r}$ with regard to the last projective $P_n$ is triangle equivalent to $\Nak{n-1}{r}$, the assertion follows from \cite{Geigle:Lenzing:1991} and \cite[Theorem 4.2]{Crawley-Boevey:2011}.
\end{proof}

\begin{proposition}\label{prop:fuchs}
A Fuchsian singularity category cannot be piecewise hereditary.
\end{proposition}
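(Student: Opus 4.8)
The plan is to assume, for contradiction, that a Fuchsian singularity category $\Tt$ is piecewise hereditary, and to exploit the fact that $\Tt$ carries a tilting object (coming from its description as $\Der{A}$ for an extended canonical algebra $A$, equivalently as $\zvect{\X}$ with $\chi_{\X}<0$). By Happel's structure theorem recalled in Section~2, piecewise heredity together with the existence of a tilting object forces $\Tt\simeq\Der{\Hh}$ for a Hom-finite hereditary abelian $k$-category $\Hh$, of module or of sheaf type. In fact I would not use the dichotomy itself; only that $\Hh$ is \emph{hereditary} enters.

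First I would record the homological rigidity of derived categories of hereditary categories. Since $\Hh$ is hereditary, every indecomposable object of $\Der{\Hh}$ has the form $A[i]$ for a unique $i\in\ZZ$ and an indecomposable $A\in\Hh$. Hence for any indecomposable $X\simeq A[i]$ one has $\Hom{\Der{\Hh}}{X}{X[n]}\cong\Ext{n}{\Hh}{A}{A}$, which vanishes for all $n\notin\{0,1\}$ because $\Hh$ has no higher extensions. Transporting this along $\Tt\simeq\Der{\Hh}$ yields the intrinsic obstruction: in a piecewise hereditary category, every indecomposable object $X$ satisfies $\Hom{\Tt}{X}{X[n]}=0$ for all $n\notin\{0,1\}$. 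The proof then reduces to producing a single indecomposable object $X\in\Tt$ with $\Hom{\Tt}{X}{X[n]}\neq0$ for some $n\geq2$; note that this kills the module case and the sheaf case uniformly.

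To produce such an object I would work inside $\zvect{\X}$ and use its Serre functor $\bS=\tau[1]$, where $\tau$ now denotes the Auslander--Reiten translation of $\zvect{\X}$, governed by the $a$-invariant of the two-dimensional graded Gorenstein ring $R=\bigoplus_{n\in\ZZ}S_{n\vom}$ underlying the Fuchsian singularity. The point is that for $\chi_{\X}<0$ this $\tau$ has infinite order and moves objects arbitrarily far, so that $\bS$ is genuinely of cohomological amplitude larger than one, in contrast with the domestic and tubular situations. Concretely, I would start from a line bundle $L$ not in the $\tau^{\ZZ}$-orbit of $\co$ (hence nonzero in $\zvect{\X}$), compute its suspension $L[1]=I(L)/L$ as the cokernel of a $\tau^{\ZZ}\co$-injective hull, and iterate to locate a shift $n\geq2$ for which Serre duality $\Hom{}{L}{L[n]}\cong D\,\Hom{}{L}{\bS L[-n]}$ forces a nonzero self-extension. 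An equivalent and perhaps cleaner route is to invoke the Happel--Zacharia criterion and instead show that the strong global dimension of the extended canonical algebra $A$ is infinite, by exhibiting a family of indecomposable perfect complexes of unbounded width built from iterates of $\bS$.

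The main obstacle is precisely this last existence step: extracting, from the Serre functor of the Fuchsian singularity, an indecomposable object whose self-extensions reach cohomological degree at least two (equivalently, an unbounded family of indecomposable complexes). Everything else — the reduction to a hereditary heart and the self-$\mathrm{Hom}$ rigidity lemma — is formal. The essential input, and the reason the statement fails in the domestic ($\chi_{\X}>0$) and tubular ($\chi_{\X}=0$) cases, is the strict negativity $\chi_{\X}<0$, which makes the Auslander--Reiten translation of $\zvect{\X}$ aperiodic of infinite order and thereby spreads the self-extension groups beyond two consecutive degrees.
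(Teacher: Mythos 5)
Your reduction is sound as far as it goes: in $\Der{\Hh}$ for a hereditary abelian $\Hh$ every indecomposable is a stalk complex $A[i]$, so $\Hom{\Tt}{X}{X[n]}=0$ for $n\notin\{0,1\}$ is indeed a necessary condition for piecewise heredity, and exhibiting one indecomposable violating it would finish the proof. The genuine gap is the existence step, which you yourself flag as "the main obstacle" and never carry out --- and your proposed candidate points in exactly the wrong direction. You suggest taking a line bundle $L$ outside the $\tau$-orbit of $\co$ and iterating suspensions until Serre duality "forces" a self-extension in degree $\geq 2$. But Theorem \ref{tiltingobjects} and Appendix A of this very paper show that for several Fuchsian weight types (e.g.\ $(2,4,5)$ with $L=\co(\vx_2)$) such line bundles are direct summands of tilting objects in $\zvect\XX$, hence satisfy $\Hom{}{L}{L[n]}=0$ for \emph{all} $n\neq 0$. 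So the natural candidates are extension-free, no contradiction materializes from them, and nothing in your sketch identifies an indecomposable that does work. Note also that your necessary condition is not sufficient for piecewise heredity, so there is no a priori guarantee that a violating object exists at all; establishing it (or, equivalently, the infinitude of the strong global dimension of the extended canonical algebra) would require a substantive computation that the proposal does not supply.

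The paper avoids this entirely by comparing Auslander--Reiten quivers: by Lenzing--de la Pe\~na, all AR-components of $\vect\XX$ (hence of $\zvect\XX$) for $\chi_{\XX}<0$ have shape $\ZZ\AA_\infty$, whereas a piecewise hereditary category of sheaf type necessarily has tubes and one of module type has components of the form $\ZZ\Delta$ for a finite quiver $\Delta$. That is a complete, citable obstruction where your homological one remains a plan. If you want to salvage your route, the cleanest path is probably the Happel--Zacharia strong-global-dimension criterion applied to the extended canonical algebra, but you would still have to actually produce the unbounded family of indecomposable perfect complexes.
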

\begin{proof}

Following the pattern of \cite{Kussin:Lenzing:Meltzer:2013adv} one deduces that the singularity category of a Fuchsian singularity, as studied in \cite{Lenzing:Pena:2011}, is triangle equivalent to the stable category of vector bundles $\zvect\XX$ over a weighted projective line $\XX$.
Since, by \cite{Lenzing:Pena:1997}, all Auslander-Reiten components of $\vect\XX$ have type $\ZZ\AA_\infty$, the same assertion holds true for $\zvect\XX$. But this is not possible for a piecewise hereditary category: if it has sheaf type there will be AR-components that are tubes; if it has module type there will be components of type $\ZZ\Delta$ for a finite quiver $\Delta$.
\end{proof}

Now we can obtain the following classification result for piecewise hereditary Nakayama categories due to Happel and Seidel. Recall that the Nakayama category $\Nak{n}{2}$ is piecewise hereditary of type $\mathbb{A}_n$ for any $n$, while the Nakayama category $\Nak{n}{n-1}$ is piecewise hereditary of type $\mathbb{D}_n$ for $n\geq 4$, see \cite{Happel:Seidel:2010}.
We exclude these two cases in the following.

By abusing of notations, we use $\Nn\simeq (a,b,c)$ to denote that the Nakayama category $\Nn$ has sheaf type $(a,b,c)$, and similar for the other types.


\begin{theorem}\label{classification-of-piecewise-hereditary-nakayama-cat}\cite{Happel:Seidel:2010}
The complete classification of piecewise hereditary Nakayama categories $\Nn=\Nak{n}{r}$, for $n\geq5$ and $r\geq3$, is as follows:
\begin{enumerate}[label=(\roman*)]
\item[(1)] If $\Nn$ is piecewise hereditary of sheaf type, then one of the following holds:
\begin{enumerate}
\item[-] $\Nn$ belongs to the family $\Nak{r+4}{r}\, (r\geq 4)$, having sheaf type $\can{2,3,r}$;
\item[-] $\Nn$ is one of the categories $\Nak{9}{3}$, $\Nak{9}{6}$, $\Nak{9}{7}$, having sheaf type  $\can{2,3,5}$;
\item[-] $\Nn$ is one of the categories $\Nak{11}{3}, \Nak{11}{6}$, having sheaf type $\can{2,3,7}$;
 \item[-] $\Nn\simeq\Nak{10}{3}$, having sheaf type $\can{2,3,6}$;
 \item[-] $\Nn\simeq\Nak{9}{4}$, having sheaf type $\can{2,4,4}$;
 \item[-] $\Nn\simeq\Nak{10}{4}$, having sheaf type $\can{2,4,5}$.
\end{enumerate}
\item[(2)] If $\Nn$ is piecewise hereditary of module type, then one of the following holds:
 \begin{enumerate}
  \item[-] $\Nn$ belongs to the family $\Nak{r+2}{r}\, (r\geq 3)$, having module type $\hstar{2,3,r-1}$,
  \item[-] $\Nn$ belongs to the family $\Nak{r+3}{r}\, (r\geq 3)$, having module type $\hstar{2,3,r}$,
  \item[-] $\Nn$ is one of the categories $\Nak{9}{3}$, $\Nak{9}{5}$, having module type $\her{2,3,6}$,
  \item[-] $\Nn\simeq\Nak{7}{3}$, having module type $\her{2,3,4}$,
  \item[-] $\Nn\simeq\Nak{8}{3}$, having module type $\her{2,3,5}$,
  \item[-] $\Nn\simeq\Nak{8}{4}$, having module type $\her{2,4,4}$,
  \item[-] $\Nn\simeq\Nak{10}{5}$, having module type $\her{2,3,7}$.
 \end{enumerate}
\end{enumerate}
The overlap between sheaf type and module type are 
$(2,3,4)\simeq\Nak{8}{4}\simeq [2,4,4]$ and $$(2,3,5)\simeq\Nak{9}{3}\simeq\Nak{9}{5}\simeq\Nak{9}{6}\simeq\Nak{9}{7}\simeq [2,3,6].$$
\end{theorem}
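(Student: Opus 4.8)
The plan is to leverage the monotonicity of Lemma \ref{lemma:perp} so as to reduce the classification, for each fixed $r$, to a finite verification. Since $\Nak{n}{r}$ being piecewise hereditary forces $\Nak{n-1}{r}$ to be piecewise hereditary, for every fixed $r\geq 3$ the set $\{\,n : \Nak{n}{r}\text{ is piecewise hereditary}\,\}$ is an initial interval, capped by a threshold $N^*(r)$ (it is nonempty, since the hereditary cases $n\le r+1$ belong to it). Consequently the theorem splits into two tasks: (A) show that each green or yellow entry of Figure \ref{Figure1} is piecewise hereditary of the stated type; and (B) for each $r$, exhibit $\Nak{N^*(r)+1}{r}$ as a Fuchsian singularity category, which is not piecewise hereditary by Proposition \ref{prop:fuchs}. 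Monotonicity then propagates failure of heredity to all $n>N^*(r)$ at once, so the two tasks together give both correctness and completeness of the list.

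For task (A), the three infinite families are immediate: $\Nak{r+2}{r}\simeq\hstar{2,3,r-1}$ and $\Nak{r+3}{r}\simeq\hstar{2,3,r}$ by Corollary \ref{familis Nakayama corollary}(2),(3), while $\Nak{r+4}{r}\simeq\can{2,3,r}$ by Proposition \ref{tilting realization}. The remaining finitely many exceptional cases occur only for $r\le 6$, and almost all of them I would reduce to a family member through a Happel--Seidel symmetry of Proposition \ref{HS-symmetry}. Taking $n=(a-1)(b-1)$ one finds, for instance, $\Nak{11}{3}\simeq\Nak{11}{7}\simeq\can{2,3,7}$ and $\Nak{11}{6}\simeq\Nak{11}{3}$; likewise $\Nak{8}{3}\simeq\Nak{8}{5}\simeq\hstar{2,3,5}$, $\Nak{9}{3}\simeq\Nak{9}{6}\simeq\hstar{2,3,6}$, and $\Nak{10}{3}\simeq\Nak{10}{6}\simeq\can{2,3,6}$. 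In matching the labels one must use that, for a weight type of positive Euler characteristic, the sheaf and module descriptions coincide---so that $\can{2,3,4}$ is the same as $\hstar{2,4,4}$ and $\can{2,3,5}$ the same as $\hstar{2,3,6}$---which is exactly why certain $\Nak{n}{r}$ appear under both headings. Only three sporadic cases, $\Nak{9}{4}\simeq\can{2,4,4}$, $\Nak{10}{4}\simeq\can{2,4,5}$ and $\Nak{10}{5}\simeq\hstar{2,3,7}$, are not reachable by any symmetry; these I would realize directly by a tilting construction in the spirit of Proposition \ref{tilting realization}, or as a perpendicular subcategory inside the relevant stable category of Section 5.

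For task (B), the boundary weight type always turns out to be either $(2,3,\cdot)$ or $(2,4,5)$. For $r\ge 7$ this is clean: $\Nak{r+5}{r}\simeq\fuchs{2,3,r}$ with $\chi_{\XX(2,3,r)}<0$ by Corollary \ref{familis Nakayama corollary}(1), so $N^*(r)=r+4$. For $r=3$, the symmetry at $n=12=(3-1)(7-1)$ gives $\Nak{12}{3}\simeq\Nak{12}{7}\simeq\fuchs{2,3,7}$, and for $r=6$ the construction underlying Proposition \ref{N-r+6-r} applies equally, since $\chi_{\XX(2,3,7)}<0$, yielding $\Nak{12}{6}\simeq\fuchs{2,3,7}$. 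For $r=4,5$ the boundary is $\Nak{11}{4}\simeq\Nak{11}{5}$ (Proposition \ref{HS-symmetry}(2)) of the sporadic type $\fuchs{2,4,5}$; here I would invoke the line-bundle tilting object in $\zvect\XX(2,4,5)$ constructed in Section 5 (Theorem \ref{tiltingobjects}) to realize the boundary algebra as that singularity category. In every case Proposition \ref{prop:fuchs} shows the boundary algebra is not piecewise hereditary, and Lemma \ref{lemma:perp} then finishes the whole column.

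The hard part will be task (B) for the sporadic weight $(2,4,5)$: certifying that $\Nak{11}{4}$ genuinely is the singularity category $\zvect\XX(2,4,5)$ is not formal and rests on the explicit extension-free verifications deferred to Section 5. On the positive side, the delicate bookkeeping lies in pinning down the three non-symmetric sporadics $\Nak{9}{4}$, $\Nak{10}{4}$, $\Nak{10}{5}$ and in reconciling the sheaf versus module labels through the domestic coincidences noted above. Once each threshold $N^*(r)$ has been confirmed against Figure \ref{Figure1} for the finitely many exceptional columns $r\le 6$, the monotonicity of Lemma \ref{lemma:perp} assembles the complete list with no further computation.
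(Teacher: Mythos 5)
Your proposal is correct and follows essentially the same route as the paper: the three infinite families come from Proposition \ref{tilting realization} and Corollary \ref{familis Nakayama corollary}, the exceptional entries from the (extended) Happel--Seidel symmetries of Proposition \ref{HS-symmetry}, and completeness from the same fencing argument combining Lemma \ref{lemma:perp} with Proposition \ref{prop:fuchs} once the Fuchsian boundary column is identified via Corollary \ref{familis Nakayama corollary}(1), Proposition \ref{N-r+6-r} and Theorem \ref{tiltingobjects}. The only minor divergence is in the three sporadics you flag as unreachable by symmetry: the paper in fact gets $\Nak{9}{4}$ from the triangle-type realization $\Nak{9}{4}\simeq\langle 2,4,4\rangle$ together with the tubular coincidence $\langle 2,4,4\rangle\simeq(2,4,4)$, $\Nak{10}{5}$ by one-point extension from $\Nak{9}{5}\simeq[2,3,6]$, and $\Nak{10}{4}$ as a perpendicular subcategory of $\langle 2,4,5]$, rather than by fresh tilting constructions.
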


\begin{proof}
By Proposition \ref{tilting realization} and Corollary \ref{familis Nakayama corollary}, we obtain the first three columns of piecewise hereditary Nakayama categories in the following figure (except for the member $\Nak{7}{3}$), and $\nak{r+5}{r}$ ($r\geq 7$) in the fourth column has Fuchsian type $\fuchs{2,3,r}$.

\small
\begin{figure}[H]
$$
\tiny
\def\l{\langle}
\def\y{\cellcolor{yellow}}
\def\r{\cellcolor{red}}
\def\g{\cellcolor{YellowGreen}}
\arraycolsep3.5pt
\begin {array}{c|cccccccccccccc}
r&\g\vdots  &\g\vdots  &\y\vdots  &\r\vdots&& & & & & & &   \\
&\g[2,3,18]&\g[2,3,19]&\y(2,3,19)&\r\l2,3,19]&& & & & & & &   \\ \noalign{\smallskip}
18&\g[2,3,17]&\g[2,3,18]&\y(2,3,18)&\r\l2,3,18]&& & & & & & &   \\ \noalign{\smallskip}
17&\g[2,3,16]&\g[2,3,17]&\y(2,3,17)&\r\l2,3,17]&& & & & & & &   \\ \noalign{\smallskip}
16&\g[2,3,15]&\g[2,3,16]&\y(2,3,16)&\r\l2,3,16]&& & & & & & &   \\ \noalign{\smallskip}
15&\g[2,3,14]&\g[2,3,15]&\y(2,3,15)&\r\l2,3,15]&& & & & & & &   \\ \noalign{\smallskip}
14&\g[2,3,13]&\g[2,3,14]&\y(2,3,14)&\r\l2,3,14]&& & & & & & &   \\ \noalign{\smallskip}
13&\g[2,3,12]&\g[2,3,13]&\y(2,3,13)&\r\l2,3,13]&& & & & & & &   \\ \noalign{\smallskip}
12&\g[2,3,11]&\g[2,3,12]&\y(2,3,12)&\r\l2,3,12]&& & & & & & &   \\ \noalign{\smallskip}
11&\g[2,3,10]&\g[2,3,11]&\y(2,3,11)&\r\l2,3,11]&& & & & & & &   \\ \noalign{\smallskip}
10&\g[2,3,9]&\g[2,3,10]&\y(2,3,10)&\r\l2,3,10]&& & & & & & &   \\ \noalign{\smallskip}
 9&\g[2,3,8]&\g[2,3,9]&\y(2,3,9)&\r\l2,3,9]&& & & & & & &   \\ \noalign{\smallskip}
 8&\g[2,3,7]&\g[2,3,8]&\y(2,3,8)&\r\l2,3,8]&&&& & & & &   \\ \noalign{\smallskip}
 7&\g[2,3,6]&\g[2,3,7]&\y(2,3,7)&\r\l2,3,7]&& & & & & & &   \\ \noalign{\smallskip}
 6&\g[2,3,5]&\g[2,3,6]&\y(2,3,6)&\y(2,3,7)&\r\l2,3,7]&&& & & & &   \\ \noalign{\smallskip}
5&\g[2,3,4]&\g[2,3,5]&\y(2,3,5)&\g[2,3,7]&\r\l2,4,5]&&&& & & &   \\ \noalign{\smallskip}
 4&\g[2,3,3]&\g[2,3,4]&\y(2,3,4)&\y(2,4,4)&\y(2,4,5)&\r\l2,4,5]&&&&  & &  \\ \noalign{\smallskip}
3&\g[2,3,2]&\g[2,3,3]&\g[2,3,4]&\g[2,3,5]&\y(2,3,5)&\y(2,3,6)&\y(2,3,7)&\r\l2,3,7]&&&& \\ \noalign{\smallskip}\hline
 &2& 3& 4&5&6 & 7&8 &9 && & & n-r\end {array}
$$
\caption{Module type (green), sheaf type (yellow) and the red wall, formed by Fuchsian singularities (red)}
\end{figure}
\normalsize

Now we show that the remaining sporadic Nakayama categories listed in the above figure have the desired types.

By Remark \ref{why r geq 7} and Proposition \ref{N-r+6-r}, we know that $$\Nak{10}{5}\simeq [2,3,7];\quad \Nak{11}{6}\simeq (2,3,7)  \text{\quad and\quad}  \Nak{12}{6}\simeq  \langle 2,3,7].$$
By Happel-Seidel symmetry, we have $\Nak{2(r-1)}{3}\simeq\Nak{2(r-1)}{r}$.
In particular, for $r=5,6,7$,
$$\Nak{8}{3}\simeq\Nak{8}{5}\simeq [2,3,5];\quad \Nak{10}{3}\simeq \Nak{10}{6}\simeq (2,3,6) \text{\quad and\quad} \Nak{12}{3}\simeq \Nak{12}{7}\simeq \langle2,3,7].$$
Then by Proposition \ref{HS-symmetry}, we obtain
$$\Nak{7}{3}\simeq\Nak{7}{5}\simeq [2,3,4];\quad \Nak{9}{3}\simeq \Nak{9}{6}\simeq (2,3,5) \text{\quad and\quad} \Nak{11}{3}\simeq   \Nak{11}{7}\simeq(2,3,7).$$
By Happel-Seidel symmetry again we have $\Nak{12}{4}\simeq\Nak{12}{5}$. It follows from Proposition \ref{HS-symmetry} and Theorem \ref{tiltingobjects} (proved independently in the next section) that $$\Nak{11}{4}\simeq\Nak{11}{5}\simeq\langle 2,4,5].$$
For $\X$ of tubular type (2,4,4), we have a triangle equivalence $\svect{\X}\simeq \Der{\X}$. Hence, $$\Nak{9}{4}\simeq \langle2,4,4\rangle\simeq (2,4,4).$$
Then by using one-point extension approach one easily obtains $$\Nak{10}{4}\simeq (2,4,5).$$

It is well known that there exists a derived equivalence between the category of coherent sheaves of a domestic weighted projective line and the representation category of the associated affine tame quiver. In particular, $$(2,3,4)\simeq [2,4,4] \text{\quad and\quad} (2,3,5)\simeq [2,3,6],$$ which have affine types $\widetilde{\mathbb{E}}_{7}$ and $\widetilde{\mathbb{E}}_{8}$, respectively.


In the above figure, the Nakayama categories $\Nak{12}{3}$, $\Nak{11}{4}$, $\Nak{11}{5}$, $\Nak{12}{6}$ and $\Nak{r+5}{r}$, $r\geq7$ form a ``red wall'', all having Fuchsian type. We show that to the right of the red wall, that is in the ``white area'', we have no further piecewise hereditary categories.
Indeed, assume that we have a piecewise hereditary category $\Nak{n}{r}$ belonging to the ``white area'', we thus find a category $\Nak{m}{r}$ of Fuchsian type with $m<n$. By means of Lemma~\ref{lemma:perp} $\Nak{m}{r}$ then must be piecewise hereditary, contradicting Proposition~\ref{prop:fuchs}.

The proof is complete.
\end{proof}

\section{Tilting objects in $\zvect\X$}

We assume $\chi_{\X}<0$ throughout this section, i.e., we are dealing with weighted projective lines $\X=\X(p_1, p_2, p_3)$ of wild type.

In order to classify all the Fuchsian Nakayama categories, we need to do further investigation on the stable category $\zvect\X$ of vector bundles in this section. After describing the projective covers and homomorphisms for line bundles, we finally construct tilting objects consisting of line bundles in $\zvect\X$ for certain $\X$, whose endomorphism algebras are Nakayama algebras.

Recall that $\mathcal{L}$ denotes the system of all line bundles in $\coh\X$. The number of $\tau$-orbits of $\mathcal{L}$ is given by $[\mathbb{L}:\mathbb{Z}\vec{\omega}]=-\chi_{\X}\cdot\prod_{i=1}^3 p_i$, see \cite[Lemma 4.19]{Lenzing:2011}. In order to give a set of representatives for the $\tau$-orbits of $\mathcal{L}$, we consider the following set
\begin{equation}\label{SS}\Ss:=\{\vx\, | \,0\leq \vx\leq n\vw+\vc \text{\ \ for\ any  }n\geq 2\}.
\end{equation}

\begin{proposition}\label{rep for line bundle orbits} There exists a bijection between the $\tau$-orbits of line bundles in $\coh\X$ and
$\Ss$.
\end{proposition}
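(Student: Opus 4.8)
The plan is to push everything down to the Picard group $\bl$ and its positive cone, and then to show that $\Ss$ selects exactly one representative from each $\tau$-orbit. The first step is the reduction: since the line bundles are precisely the $\co(\vx)$ with $\co(\vx)\cong\co(\vy)$ iff $\vx=\vy$, and $\tau$ acts by the shift $\vx\mapsto\vx+\vw$, the $\tau$-orbits of line bundles are in natural bijection with the cosets $\bl/\ZZ\vw$, via $\co(\vx)\mapsto\vx+\ZZ\vw$. So the proposition is equivalent to the assertion that the composite $\Ss\incl\bl\twoheadrightarrow\bl/\ZZ\vw$ is a bijection, i.e. that $\Ss$ is a complete and irredundant transversal for the $\ZZ\vw$-action.

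The combinatorial engine I would set up next is the normal form: each $\vz\in\bl$ is written uniquely as $\vz=\sum_{i=1}^3 l_i\vx_i+l\vc$ with $0\le l_i\le p_i-1$, and, writing $\lfloor\vz\rfloor:=l$, one has $\vz\ge0$ if and only if $\lfloor\vz\rfloor\ge0$ (both directions are immediate from $p_i\vx_i=\vc$). Computing the normal form of $\vc+\vw-\vz$ — where each term $-(l_i+1)\vx_i$ contributes exactly one borrow $-\vc$ — yields the reflection identity $\lfloor\vz\rfloor+\lfloor\vc+\vw-\vz\rfloor=-1$, whence the key duality
\[
\vz\ge0\iff \vc+\vw-\vz\not\ge0. \qquad(\star)
\]

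Now I would use $(\star)$ to rewrite the defining conditions of $\Ss$. Applying it to $\vz=k\vw+\vc-\vx$, and noting $\vc+\vw-\vz=\vx-(k-1)\vw$, the whole family of upper bounds collapses to a single lower condition: for $\vx\ge0$ one has $\vx\le k\vw+\vc$ for all $k\ge2$ if and only if $\vx-j\vw\not\ge0$ for all $j\ge1$. Hence $\Ss=\{\vx\in\bl\mid \vx\ge0,\ \vx-j\vw\not\ge0\ \text{for all}\ j\ge1\}$, i.e. $\Ss$ consists exactly of those $\vx$ that are $\ge0$ and \emph{minimal} with this property along their $\vw$-orbit. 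Existence and uniqueness of such a minimal representative in each orbit is then formal: because $\chi_{\X}<0$ we have $\delta(\vw)>0$, so applying $\delta$ shows $\{n\mid\vx_0+n\vw\ge0\}$ is bounded below, while $p\vw=\delta(\vw)\vc$ forces $\lfloor\vx_0+n\vw\rfloor\to+\infty$ and hence nonemptiness. Letting $n_0$ be the minimum, the element $\vx_0+n_0\vw$ lies in $\Ss$, and any $\vx_0+n'\vw\in\Ss$ must satisfy $n'\ge n_0$ (by positivity) and $n'\le n_0$ (by minimality, since otherwise $\vx_0+n_0\vw=(\vx_0+n'\vw)-(n'-n_0)\vw\ge0$ contradicts the minimality clause), so $n'=n_0$. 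This produces the claimed bijection.

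The substance of the argument sits in the elementary but careful lattice bookkeeping of the second paragraph: establishing the positivity criterion $\vz\ge0\iff\lfloor\vz\rfloor\ge0$ and, above all, the reflection identity $(\star)$, keeping precise track of the borrows in the normal form and checking that these are genuine identities in $\bl$ (not merely after applying $\delta$) even when $\bl$ carries torsion. Once $(\star)$ is in hand, the passage from the ``upper'' description of $\Ss$ to the ``minimal representative'' description, and the final existence/uniqueness count, are purely formal.
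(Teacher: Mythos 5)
Your proof is correct and follows essentially the same route as the paper's: both arguments identify $\Ss$ with the set of minimal non-negative representatives of the $\ZZ\vw$-cosets (equivalently, the $\tau$-orbits of line bundles), using $\delta(\vw)>0$ to get existence and uniqueness of that minimal representative. Your reflection identity $(\star)$ is precisely the standard fact the paper invokes implicitly when it passes from $\vx-(m+n)\vw\not\geq 0$ to $\vx-(m+n)\vw\leq\vw+\vc$, so you have only made that step explicit.
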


\begin{proof}
  For any $\vx\in\LL$, by the assumption $\chi_{\X}<0$, i.e., $\delta(\vw)>0$, there exists a maximal integer $m=m(\vx)$, such that $\Hom{}{\co(m\vw)}{\co(\vx)}\neq 0$. Hence, $\vx-m\vw\geq 0$ and $\vx-(m+n)\vw\leq \vw+\vc$ for any $n\geq 1$. Thus, $\vx-m\vw\in\Ss$.
  Therefore, the following map from the $\tau$-orbits of line bundles $\mathcal{L}/\langle\tau\rangle$ to the set $\Ss$ is well-defined $$\varphi: \mathcal{L}/\langle\tau\rangle\to \Ss;\quad \co(\vx)\mapsto \vx-m(\vx)\vw.$$
  Obviously, $\varphi$ is surjective. Moreover, $\varphi(\co(\vx))=\varphi(\co(\vy))$ implies $\co(\vx)$ and $\co(\vy)$ belong to the same $\tau$-orbit, hence $\varphi$ is injective.
  This finishes the proof.
\end{proof}

\begin{lemma}\label{n omega positive} Assume $a\vx_i\in\Ss$ for some $a>0$ and $1\leq i\leq 3$. Then  $(p_i-a+j)\vw>0$ for any $1\leq j\leq a$.
\end{lemma}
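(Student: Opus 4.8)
The plan is to translate everything into the combinatorics of normal forms in $\LL$. Recall that for $\vz\in\LL$ written in normal form $\vz=\sum_{k=1}^{3}l_k\vx_k+l\vc$ with $0\le l_k\le p_k-1$, one has $\vz\ge 0$ if and only if its $\vc$-coefficient $l$ is $\ge 0$; moreover, since $-m\vx_k=\big((-m)\bmod p_k\big)\vx_k-\lceil m/p_k\rceil\vc$, the $\vc$-coefficient of $m\vw$ equals $m-\sum_{k=1}^{3}\lceil m/p_k\rceil$. The two features I will exploit are the identity $\vc=p_i\vx_i$, which gives $\vc-a\vx_i=(p_i-a)\vx_i$, and the fact that $\delta(\vw)>0$ (as $\chi_{\X}<0$), which guarantees $m\vw\ne 0$ whenever $m\ne 0$. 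Thus it suffices to prove $m\vw\ge 0$ for each $m=p_i-a+j$ with $1\le j\le a$, after which strictness is automatic.

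First I would reduce to $a\le p_i-1$. The same reduction shows that the $\vc$-coefficient of $2\vw+\vc-a\vx_i$ equals $3-\lceil(2+a)/p_i\rceil-\sum_{k\ne i}\lceil 2/p_k\rceil$. If $a\ge p_i$ then $\lceil(2+a)/p_i\rceil\ge 2$ while each of the two remaining ceilings is $\ge 1$, so this coefficient is at most $3-2-1-1=-1<0$; hence $a\vx_i\not\le 2\vw+\vc$, contradicting $a\vx_i\in\Ss$. Therefore $a\le p_i-1$, and for $1\le j\le a$ the integer $m:=p_i-a+j$ satisfies $2\le m\le p_i$.

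The main step is to feed the index $n=m$ into the defining condition of $\Ss$. Since $m\ge 2$, membership $a\vx_i\in\Ss$ yields $m\vw+\vc-a\vx_i\ge 0$, that is $m\vw+(p_i-a)\vx_i\ge 0$ after using $\vc=p_i\vx_i$. Now I claim that adding $(p_i-a)\vx_i$ to $m\vw$ does not alter the $\vc$-coefficient: the $\vx_i$-coefficient of $m\vw$ in normal form is $(-m)\bmod p_i=(a-j)\bmod p_i=a-j$ (because $0\le a-j\le p_i-2$), and $(a-j)+(p_i-a)=p_i-j$ lies in $[1,p_i-1]$, so no carry into $\vc$ occurs while the other coordinates of $m\vw$ are untouched. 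Consequently $m\vw$ and $m\vw+(p_i-a)\vx_i$ have the same $\vc$-coefficient, and the displayed inequality forces that coefficient to be $\ge 0$; hence $m\vw\ge 0$. Finally $\delta(m\vw)=m\,\delta(\vw)>0$ shows $m\vw\ne 0$, so $m\vw>0$, as required.

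The only genuinely delicate point is this last carry computation. Since $\vw$ is itself negative in the partial order (its $\vc$-coefficient is $1-\sum_{k}\lceil 1/p_k\rceil=-2$), the set $\{m\mid m\vw\ge 0\}$ is not an interval and no naive monotonicity argument is available; what makes the implication go through is precisely the choice $n=m$, which aligns $\lceil(m+a)/p_i\rceil$ with $\lceil m/p_i\rceil+1$ so that the extra summand $(p_i-a)\vx_i$ is absorbed without touching the $\vc$-coefficient. Verifying that this cancellation is exact---equivalently, that $p_i-j$ remains inside $[0,p_i-1]$---is the crux on which the whole statement turns.
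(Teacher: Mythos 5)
Your argument is correct and takes essentially the same route as the paper's: both proofs bound $a$ via the $n=2$ instance of the defining condition of $\Ss$ (the paper obtains the sharper $a\le p_i-2$, you obtain $a\le p_i-1$; either suffices to get $p_i-a+j\ge 2$), then apply the condition with $n=p_i-a+j$ and track the $\vx_i$-coordinate in normal form to conclude $(p_i-a+j)\vw\ge 0$, with strictness coming from $\delta(\vw)>0$. Your ``no carry into the $\vc$-coefficient'' bookkeeping is precisely the paper's observation that $(p_i-a+j)\vw+\vc=(a-j)\vx_i+\vy$ with $\vy$ free of $\vx_i$ and necessarily $\vy\ge\vc$.
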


\begin{proof}
 Since $a\vx_i\leq 2\vw+\vc=\sum_{1\leq j\leq 3}(p_j-2)\vx_j$, we have $a\leq p_i-2$. Thus for any $1\leq j\leq a$, $p_i-a+j> p_i-a\geq 2$. Hence by assumption, $a\vx_i\leq (p_i-a+j)\vw+\vc=(a-j)\vx_i+\vy$ for some $\vy\in\bl$ without $\vx_i$ term in its normal form. Thus we have $\vy\geq \vc$ and then $(p_i-a+j)\vw> 0$ follows.
\end{proof}

For convenience, we denote by $\cp(E)$ the projective cover of a vector bundle $E\in\vect\X$ under the $\tau^\mathbb{Z}\co$-exact structure on $\vect\X$. We have the following description on projective covers for certain line bundles.

\begin{proposition}\label{projective cover}  Let $\{i,j,k\}=\{1,2,3\}$. The following statements hold.
 \begin{enumerate}
  \item If $\vx_i\in\Ss$, then $\cp(\co(\vx_i))=\co\oplus\co(-\vw-p_i\vw)$.
  \item If $\vx_i+\vx_j\in\Ss$, then $\cp(\co(\vx_i+\vx_j))=\co\oplus\co(-\vw)$.
  \item If $2\vx_i\in\Ss$, then $\cp(\co(2\vx_i))=\co\oplus\co(-2\vw)$.
  \end{enumerate}
\end{proposition}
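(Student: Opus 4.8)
My plan is to produce, in each of the three cases, an explicit short exact sequence of line bundles
\[
0\to\co(\vz)\to\co\oplus\co(m\vw)\to\co(\vy)\to0,
\]
where $\co(\vy)$ is the line bundle under consideration, and then to verify that it is a projective cover for the $\tau^{\Z}\co$-exact structure. First I would pin down the two unknowns $m$ and $\vz$. Since $\co(\vy)$ has rank one, a rank-two projective cover $\co\oplus\co(m\vw)$ forces the kernel to have rank one, hence to be a line bundle $\co(\vz)$; additivity of the determinant on short exact sequences then gives $m\vw=\vz+\vy$, so $\vz=m\vw-\vy$. Taking $m=-(p_i+1),-1,-2$ in cases (1), (2), (3) respectively reproduces the stated covers and determines $\vz=-\vx_i-(p_i+1)\vw$, $\vz=-\vw-\vx_i-\vx_j$, and $\vz=-2\vw-2\vx_i$. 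The right-hand map has components given by multiplication by sections of $S$ of degree $\vy$ (on the $\co$ summand) and $\vy-m\vw$ (on the $\co(m\vw)$ summand); in case (2) this is transparently the Koszul complex on the regular sequence $(x_ix_j,\,x_k^{p_k-1})$, since $\vx_i+\vx_j+\vw=\vc-\vx_k=(p_k-1)\vx_k$, and cases (1) and (3) are twists of the analogous two-element Koszul complexes. Exactness in $\coh\X$ is then automatic, the torsion quotient of the regular sequence being killed by sheafification.

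Three things remain to check: that the middle term is projective, that the sequence is $\tau^{\Z}\co$-exact (a genuine conflation), and that it is minimal. Projectivity is immediate, as $\co$ and $\co(m\vw)$ both lie in the orbit $\tau^{\Z}\co$. For $\tau^{\Z}\co$-exactness I would show that for every $n\in\Z$ the connecting map $\Hom{}{\co(n\vw)}{\co(\vy)}\to\Ext{1}{}{\co(n\vw)}{\co(\vz)}$ vanishes, so that $\Hom{}{\co(n\vw)}{-}$ stays exact. By Serre duality this Ext group is $\mathrm{D}\,\Hom{}{\co(\vz)}{\co((n+1)\vw)}=\mathrm{D}\,S_{(n+1)\vw-\vz}$, so the entire verification reduces to effectivity (nonvanishing) questions for explicit elements of $\bl$, namely the degrees $\vy-m\vw$, $(n+1)\vw-\vz$ and $\vy-n\vw$. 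This is exactly the point at which I would invoke $\vy\in\Ss$ together with Lemma \ref{n omega positive}: the positivity statements $(p_i-a+j)\vw>0$ supplied by that lemma (with $a=1$ in case (1) and $a=2$ in case (3); case (2) needs no extra input beyond $\vx_i+\vx_j+\vw=(p_k-1)\vx_k\ge0$) are what guarantee both the existence of the second map and the vanishing of all the obstruction groups.

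Minimality is the assertion that neither summand of $\co\oplus\co(m\vw)$ can be deleted, equivalently that the two structure maps are \emph{minimal} generators, with no nonzero morphism in $\tau^{\Z}\co$ letting one component factor through the other. I expect the main obstacle to lie here and in the \emph{uniformity} of the exactness check. The difficulty is that the monomial realizing the second map depends on the residues of $p_1,p_2,p_3$; for weight type $(2,3,r)$ with $i=3$, for instance, the degree $\vx_3+(r+1)\vw$ equals $\vx_2$ when $r=7$ but $\vx_1$ when $r=8$, so a naive argument would split into many congruence cases. To avoid this I would run the whole argument intrinsically, through the section module $M_{\vy}=\bigoplus_{n}\Hom{}{\co(n\vw)}{\co(\vy)}=\bigoplus_n S_{\vy-n\vw}$ over $R=\bigoplus_{n\ge0}S_{n\vw}$: under the equivalence $\CM^{\Z}R\simeq\vect\X$ the projective cover is the minimal free cover of $M_{\vy}$, whose summands are counted by the minimal homogeneous generators $M_{\vy}/R_{+}M_{\vy}$. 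The plan is then to prove, using $\vy\in\Ss$ and Lemma \ref{n omega positive} to control which graded pieces $S_{\vy-n\vw}$ are nonzero and which are already hit by multiplication from below, that $M_{\vy}$ has exactly two minimal generators, in the degrees corresponding to $\co$ and $\co(m\vw)$. This reformulation makes minimality and the generator count simultaneous consequences of the same positivity input, and I expect it to be the technical heart of the proof.
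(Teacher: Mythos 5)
Your setup agrees with the paper's: the candidate covers, the degrees of the two generators, the identification of the kernel line bundle via determinants, and the role of Lemma \ref{n omega positive} are all correct, and your reformulation via the minimal free cover of the section module $M_{\vy}=\bigoplus_n S_{\vy-n\vw}$ over $R=S|_{\ZZ\vw}$ is a legitimate repackaging of what must be proved. But the proof has a genuine gap: the central step --- showing that \emph{every} morphism $\co(n\vw)\to\co(\vy)$ factors through the two distinguished sections, equivalently that $M_{\vy}$ has exactly the two stated minimal generators --- is announced as ``the technical heart'' and then not carried out. Moreover, the reduction you offer for the $\tau^{\ZZ}\co$-exactness does not work as described: vanishing of the connecting map $\Hom{}{\co(n\vw)}{\co(\vy)}\to\Ext{1}{}{\co(n\vw)}{\co(\vz)}$ is \emph{not} an ``effectivity question,'' because the Ext group $\mathrm{D}S_{(n+1)\vw-\vz}$ is nonzero for many $n$ (already for $n=-2$ in case (2), where $(n+2)\vw+\vx_i+\vx_j=\vx_i+\vx_j\geq0$). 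So one cannot conclude by showing the obstruction group vanishes; one must genuinely prove surjectivity of $\Hom{}{\co(n\vw)}{\co\oplus\co(m\vw)}\to\Hom{}{\co(n\vw)}{\co(\vy)}$ for all $n$.

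This is exactly where the paper spends most of its effort, and the argument is not a formal positivity count: it uses the defining relation $x_i^{p_i}+x_j^{p_j}+x_k^{p_k}=0$ of $S$ to rewrite an arbitrary section $\psi\in S_{\vy-n\vw}$ as $x_i^{p_i}\psi_1+\psi_2$ with $\psi_2$ a monomial, and then a case split (on the coefficients $a_k\lessgtr a_k'$ in the normal forms) to exhibit an explicit factorization through $(x_i,\phi)$. Case (2) is also harder than you suggest: besides $\vx_i+\vx_j+\vw=(p_k-1)\vx_k\geq0$ one needs $p_j\vw>0$ from Lemma \ref{n omega positive}, and the delicate point is showing that $x_j\phi$ (inherited from case (1)) factors through the new generators $(x_ix_j,x_k^{p_k-1})$, which requires a further argument splitting on whether $a>0$ or $a=0$ and, in the latter case, proving $a_k=p_k-1$ from $\vx_i+\vx_j\in\Ss$. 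None of this case analysis is avoided by passing to $M_{\vy}/R_+M_{\vy}$; the congruence-dependent monomial bookkeeping you hoped to sidestep is precisely the content of the generation statement, so as written the proposal records a correct target but not a proof of it.
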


\begin{proof}
  (1) By Lemma \ref{n omega positive}, $\vx_i\in\Ss$ implies that $p_i\vw>0$. Note that the coefficient of $\vx_i$ in the normal form of $p_i\vw$ is zero. Hence $\vx_i+\vw+mp_i\vw\leq \vw+\vc$ if and only if $mp_i\vw\leq 0$, if and only if $m\leq 0$. Then $\Hom{}{\co(-\vw-m p_i\vw)}{\co(\vx_i)}=S_{\vx_i+\vw+mp_i\vw}\neq 0$ if and only if $m\geq 1$.
   Write $\vx_i+\vw+p_i\vw=a_j\vx_j+a_k\vx_k+a\vc$ in normal form and let $\phi=x_j^{a_j+ap_j}x_k^{a_k}$. We claim that the homomorphism $$\xymatrix{(x_i, \phi): \co\oplus \co(-\vw-p_i\vw)\ar[r]&\co(\vx_i)}$$ gives the projective cover of $\co(\vx_i)$.

  In fact, for any line bundle $L$ from the orbit $\tau^{\ZZ}\co$ and any  $0\neq \psi\in\Hom{}{L}{\co(\vx_i)}$, it suffices to show $\psi$ factors through $(x_i, \phi)$.
We consider the following exact sequence
  $$\xymatrix{0\ar[r]&\co\ar[r]^-{x_i}&\co(\vx_i)\ar[r]^-{\pi_i}& S_{i1}\ar[r]& 0.}$$
  If $\pi_i\psi=0$, then $\psi$ factors through $x_i$ already. If else, $\Hom{}{L}{S_{i1}}\neq 0$ implies $L$ has the form $\co(-\vw-m p_i\vw)$ for some $m\geq 1$. We write $\vx_i+\vw+mp_i\vw=a'_j\vx_j+a'_k\vx_k+a'\vc$ in normal form. Then $\psi\in S_{\vx_i+\vw+mp_i\vw}$ has the form $\psi=x_j^{a'_j}x_k^{a'_k}f_{a'}(x_i^{p_i}, x_j^{p_j})$, where $f_{a'}(T_i, T_j)$ is a $\ZZ$-homogeneous polynomial of degree $a'$ on variables $T_i$ and $T_j$. We can write $\psi$ as $\psi=x_i^{p_i}\psi_1+\psi_2$ such that $\psi_2$ is not divided by $x_i^{p_i}$, that is, $\psi_2=x_j^{a'_j+a'p_j}x_k^{a'_k}$. Recall that $p_i\vw>0$. It follows that $a_j\vx_j+a_k\vx_k+a\vc\leq a'_j\vx_j+a'_k\vx_k+a'\vc$. Therefore, we reduce to the following two cases:
   \begin{enumerate}
       \item[-] if $a_k\leq a'_k$, then $a_j\vx_j+a\vc\leq a'_j\vx_j+a'\vc$, hence $a_j+ap_j\leq a'_j+a'p_j$, thus $\psi_{2}$ factors through $\phi$;
       \item[-] if $a_k> a'_k$, then $a_j\vx_j+a\vc\leq a'_j\vx_j+(a'-1)\vc$, hence $a_j+ap_j\leq a'_j+(a'-1)p_j$, thus $\psi_{2}=x_j^{a'_j+(a'-1)p_j}x_k^{a'_k}(-x_i^{p_i}-x_k^{p_k})$, which factors through $(x_i, \phi)$.
    \end{enumerate}
    To sum up, in both cases above, $\psi=x_i^{p_i}\psi_1+\psi_2$ factors through $(x_i, \phi)$.

    (2) Since $\vx_i+\vx_j\in\Ss$, we have $\vx_j\in\Ss$ and then $p_j\vw>0$ by Lemma
    \ref{n omega positive}. Hence, $\Hom{}{\co(-\vw-m p_j\vw)}{\co(\vx_i+\vx_j)}=S_{(p_k-1)\vx_k+mp_j\vw}\neq 0$ if and only if $m\geq 0$. We claim that the homomorphism
    $$\xymatrix{(x_ix_j, x_k^{p_k-1}): \co\oplus \co(-\vw)\ar[r]&\co(\vx_i+\vx_j)}$$ gives the projective cover of $\co(\vx_i+\vx_j)$.

    In fact, for any line bundle $L$ from the orbit $\tau^{\ZZ}\co$ and any $0\neq\psi\in\Hom{}{L}{\co(\vx_i+\vx_j)}$, it suffices to show $\psi$ factors through $(x_ix_j, x_k^{p_k-1})$. We consider the following exact sequence  $$\xymatrix{0\ar[r]&\co(\vx_i)\ar[r]^-{x_j}&\co(\vx_i+\vx_j)\ar[r]^-{\pi_j}& S_{j1}\ar[r]& 0.}$$ If $\pi_j\psi=0$, then $\psi$ factors through $x_j$. If else, $\Hom{}{L}{S_{j1}}\neq 0$ implies $L$ has the form $\co(-\vw-m p_j\vw)$ for some $m\geq 0$, and $\psi\in S_{(p_k-1)\vx_k+mp_j\vw}$. Recall that $p_j\vw>0$, it follows that $\psi$ factors through $(x_j, x_k^{p_k-1})$. In both cases, we know that $\psi$ factors through $(x_jx_i, x_j\phi, x_k^{p_k-1})$ by using (1). Hence, to finish the claim, we only need to show that $x_j\phi$ factors through $(x_ix_j, x_k^{p_k-1})$.

    Recall that $\phi=x_j^{ap_j+a_j}x_k^{a_k}$ and $\vx_i+\vw+p_i\vw=a_j\vx_j+a_k\vx_k+a\vc$. If $a>0$, then $x_j\phi=x_j^{ap_j+a_j+1}x_k^{a_k}=x_j^{(a-1)p_j+a_j+1}x_k^{a_k}(-x_i^{p_i}-x_k^{p_k})$, which factors through $(x_ix_j, x_k^{p_k-1})$. If $a=0$, we claim that $a_k=p_k-1$, then it follows that $x_j\phi$ factors through $x_k^{p_k-1}$. In fact, since $p_i\vw>0$, we have $\vx_i+\vx_j+\vw+ p_i\vw=(a_j+1)\vx_j+a_k\vx_k\geq(p_k-1)\vx_k$. If $a_j\leq p_j-2$, then we are done; if $a_j= p_j-1$, then $\vx_i+\vx_j+\vw+ p_i\vw=\vc+a_k\vx_k$, hence $p_i\vw=(a_k+1)\vx_k$. Recall that $\vx_i+\vx_j\in\Ss$. Hence $\vx_i+\vx_j\leq p_i\vw+\vc=(a_k+1)\vx_k+\vc$, which implies $a_k=p_k-1$.

    (3) By Lemma \ref{n omega positive}, $2\vx_i\in\Ss$ implies that $p_i\vw>0$ and $(p_i-1)\vw>0$. Similarly as before, we have $\Hom{}{\co(-2\vw-m p_i\vw)}{\co(2\vx_i)}=S_{(p_j-2)\vx_j+(p_k-2)\vx_k+mp_i\vw}\neq 0$ if and only if $m\geq 0$. We claim that the homomorphism $$\xymatrix{(x_i^2, x_j^{p_j-2}x_k^{p_k-2}): \co\oplus \co(-2\vw)\ar[r]&\co(2\vx_i)}$$ gives the projective cover of $\co(2\vx_i)$.

    In fact, for any line bundle $L$ from the orbit $\tau^{\ZZ}\co$ and any  $0\neq\psi\in\Hom{}{L}{\co(2\vx_i)}$, it suffices to show $\psi$ factors through
    $(x_i^2, x_j^{p_j-2}x_k^{p_k-2})$. We consider the following exact sequence  $$\xymatrix{0\ar[r]&\co(\vx_i)\ar[r]^-{x_i}&\co(2\vx_i)\ar[r]^-{\pi_i}& S_{i2}\ar[r]& 0.}$$ If $\pi_i\psi=0$, then $\psi$ factors through $x_i$. If else, $\Hom{}{L}{S_{i2}}\neq 0$ implies $L$ has the form $\co(-2\vw-m p_i\vw)$ for some $m\geq 0$, and $\psi\in S_{(p_j-2)\vx_j+(p_k-2)\vx_k+mp_i\vw}$.
    Recall that $p_i\vw>0$, it follows that $\psi$ factors through
    $(x_i, x_j^{p_j-2}x_k^{p_k-2})$. In both cases, $\psi$ factors through
    $(x_i^2, x_i\phi, x_j^{p_j-2}x_k^{p_k-2})$ by statement (1). Hence, to finish the claim, we only need to show that $x_i\phi$ factors through $(x_i^2, x_j^{p_j-2}x_k^{p_k-2})$, which
    follows from the fact $(p_i-1)\vw>0$ by using similar arguments as in (2).

    This finishes the proof.
\end{proof}

Denote the homomorphism space in $\zvect\X$ by $\sHom(E,F)$ for $E,F\in\vect\X$.

\begin{proposition}\label{non-zero morphisms between line bundles} Let $\{i,j,k\}=\{1,2,3\}$. Assume $\vx\geq 0$ has normal form $\vx=\sum_{1\leq i\leq 3}l_i\vx_i+l\vc$. Then the following hold.
 \begin{enumerate}
  \item If $\vx_i\in\Ss$, then $\sHom(\co(\vx_i-\vx), \co(\vx_i))\neq 0$ if and only if $l_i=0$ and $\vx\ngeq \vx_i+\vw+p_i\vw$.
  \item If $\vx_i+\vx_j\in\Ss$, then $\sHom(\co(\vx_i+\vx_j-\vx), \co(\vx_i+\vx_j))\neq 0$ if and only if $l_k\leq p_k-2$ and $\vx\ngeq \vx_i+\vx_j$.
  \item If $2\vx_i\in\Ss$, then $\sHom(\co(2\vx_i-\vx), \co(2\vx_i))\neq 0$ if and only if $l_i\leq 1$ and $\vx\ngeq (p_j-2)\vx_j+(p_k-2)\vx_k$.
  \end{enumerate}
Moreover, in each case, the nonzero homomorphism space has dimension one.
\end{proposition}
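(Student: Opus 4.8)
The plan is to reduce each assertion to a computation in $\svect\X$ using the projective covers established in Proposition \ref{projective cover}. Recall that for vector bundles $E,F$, a homomorphism in $\vect\X$ becomes zero in $\zvect\X=\vect\X/[\tau^\Z\co]$ precisely when it factors through the additive closure of $\tau^\Z\co$, equivalently through the projective cover $\cp(F)$. Thus for each case I would start from the exact sequence exhibiting $\cp(F)\to F$ where $F=\co(\vx_i)$, $\co(\vx_i+\vx_j)$ or $\co(2\vx_i)$, apply $\Hom{}{\co(\vx_i-\vx)}{-}$ (resp.\ with the other targets) to it, and read off $\sHom(\co(\vx_i-\vx),F)$ as the cokernel of the induced map $\Hom{}{\co(\vx_i-\vx)}{\cp(F)}\to\Hom{}{\co(\vx_i-\vx)}{F}$. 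Here $\Hom{}{\co(\vy)}{\co(\vz)}=S_{\vz-\vy}$, the homogeneous component of the coordinate algebra $S$ in degree $\vz-\vy$, which is nonzero exactly when $\vz-\vy\geq 0$ in normal form, so all the spaces in sight are explicit.

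First I would treat case (1). By Proposition \ref{projective cover}(1), $\cp(\co(\vx_i))=\co\oplus\co(-\vw-p_i\vw)$ with structure map $(x_i,\phi)$. Applying $\Hom{}{\co(\vx_i-\vx)}{-}$ gives a map $S_{\vx}\oplus S_{2\vx_i+\vw+p_i\vw-\vx}\to S_{\vx_i}\cdot$ whose image is the subspace of $S_{\vx_i}$ of morphisms factoring through a line bundle in $\tau^\Z\co$. The nonvanishing of $\sHom$ is then the failure of the source map $\Hom{}{\co(\vx_i-\vx)}{\co(\vx_i)}=S_{\vx}$ to be hit by these factorizations. The condition $\vx\ngeq\vx_i+\vw+p_i\vw$ should say exactly that the component through $\phi$ does not cover $S_\vx$, while $l_i=0$ is the condition ensuring $S_\vx$ is not absorbed through the map $x_i$ (that is, $\vx$ has no $\vx_i$-contribution to cancel). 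Cases (2) and (3) follow the same template using Proposition \ref{projective cover}(2),(3): one substitutes the corresponding projective covers $(x_ix_j,x_k^{p_k-1})$ and $(x_i^2,x_j^{p_j-2}x_k^{p_k-2})$ and transcribes the factorization conditions into the stated inequalities $l_k\leq p_k-2$, $\vx\ngeq\vx_i+\vx_j$ and $l_i\leq 1$, $\vx\ngeq(p_j-2)\vx_j+(p_k-2)\vx_k$ respectively.

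For the final dimension-one claim I would argue as follows. Whenever $\sHom(\co(\vx_i-\vx),F)\neq0$, the inequalities force $\vx$ to lie in a narrow range; combined with the normal-form constraint $0\leq l_m\leq p_m-1$ on each coordinate, the homogeneous component $S_{\det F-(\det F-\vx)}=S_\vx$ is itself already one-dimensional over $k$, or else the image of the projective cover map is a codimension-one subspace. In either situation the stable Hom quotient is at most one-dimensional, and nonzero by the first part, hence exactly one. The cleanest way to see dimensions is to recall that $\dim_k S_{\vy}$ equals the number of lattice points in the relevant simplex and that the degrees appearing here are small (differences of $\vx_i,\vx_j,\vx_k$ and $\vw$-multiples), so the count collapses to $1$.

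The main obstacle I anticipate is the bookkeeping in case (1): there the projective cover involves the shift by $-\vw-p_i\vw$, and deciding exactly when a given $\psi\in S_\vx$ factors through $\phi=x_j^{a_j+ap_j}x_k^{a_k}$ requires the same delicate normal-form case analysis (splitting on whether the $\vx_k$-exponent exceeds a threshold, and using $p_i\vw>0$ from Lemma \ref{n omega positive}) that already appears in the proof of Proposition \ref{projective cover}. Translating the divisibility condition ``$\psi$ is divisible by $x_i$ or factors through $\phi$'' precisely into the clean inequality $\vx\ngeq\vx_i+\vw+p_i\vw$ together with $l_i=0$ is where the real work lies; once that dictionary is set up, cases (2) and (3) are analogous but genuinely simpler because their projective covers involve only the single shift by $-\vw$ or $-2\vw$ and the relation $x_i^{p_i}+x_j^{p_j}+x_k^{p_k}=0$ enters in a more transparent way.
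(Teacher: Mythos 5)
Your proposal follows essentially the same route as the paper's own proof: a morphism $\psi\colon\co(\vx_i-\vx)\to F$ vanishes in $\zvect\X$ if and only if it factors through the projective cover $\cp(F)$ computed in Proposition \ref{projective cover}, and the stated conditions are precisely the transcription of non-factorization through $(x_i,\phi)$ (resp.\ the covers in cases (2) and (3)). The paper is in fact terser than you are---it records only this reduction and concludes ``then the result follows''---so your identification of where the normal-form bookkeeping lies is accurate; the only slip is writing $\svect\X$ where you mean $\zvect\X$ in your opening sentence.
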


\begin{proof}
  We only prove the first statement, since the others are analogous. Recall from Proposition \ref{projective cover} that $(x_i, \phi): \co\oplus \co(-\vw-mp_i\vw)\to\co(\vx_i)$ gives the projective cover of $\co(\vx_i)$. Hence, an element $\psi\in\Hom{}{\co(\vx_i-\vx)}{\co(\vx_i)}$ does not vanish in the stable category $\zvect\X$ if and only if $\psi$ can not factor through $(x_i, \phi)$. Then the result follows.
\end{proof}

%

Now we can state the main result of this section.

\begin{theorem}\label{tiltingobjects}
\begin{enumerate}
  \item In $\zvect\X(2,4,5)$, $$T_{(2,4,5)}=\bigoplus\limits_{k=0}^{10}\co(\vx_2+k\vx_3)$$
       is a tilting object whose endomorphism algebra is isomorphic to $N_{11}(5)$.
  \item In $\zvect\X(2,4,7)$, $$T_{(2,4,7)}=\big(\bigoplus\limits_{k=0}^{6}\co(\vx_2+3k\vx_3)\big)\oplus
      \big(\bigoplus\limits_{k=0}^{5}\co(\vx_2+(3k+1)\vx_3)\big)$$
      is a tilting object whose endomorphism algebra is isomorphic to $N_{13}(6)$.
  \item In $\zvect\X(2,5,5)$, $$T_{(2,5,5)}=\bigoplus\limits_{k=0}^{2}\bigoplus\limits_{a=1}^{4}\bS^k(\co(a\vx_3))$$
 is a tilting object whose endomorphism algebra is isomorphic to $N_{12}(5)$.
  \item In $\zvect\X(2,5,6)$,
  $$T_{(2,5,6)}=\big(\bigoplus\limits_{k=0}^{3}\bS^k(\co(\vx_3))\big)\oplus \big( \bigoplus\limits_{k=0}^{2}\bS^k(\co(2\vx_3)\oplus \co(4\vx_3)\oplus \co(6\vx_3))\big)$$
  is a tilting object whose endomorphism algebra is isomorphic to  $N_{13}(5)$.
\end{enumerate}
\end{theorem}


%


\begin{proof} First we need to show that all of $T_{(2,4,5)}, T_{(2,4,7)}, T_{(2,5,5)}$ and $T_{(2,5,6)}$ are extension-free, which have been done case by case. In order to make the paper readable, we put them in the appendix, see Propositions \ref{extension-free 245}, \ref{extension-free 247}, \ref{extension-free 255} and \ref{extension-free 256} respectively.

Secondly, observe that for $T=T_{(2,4,5)}, T_{(2,4,7)}, T_{(2,5,5)}$ or $T_{(2,5,6)}$, the indecomposable direct summands of $T$ are line bundles up to suspension shift, which can be ordered to form an exceptional sequence. Thus the smallest triangulated subcategory $\Cc$ of $\zvect\X$ containing $T$ is generated by an exceptional sequence. By \cite{Bondal:Kapranov:1989} $\zvect\X$ is generated by $\Cc$ together with $\Cc ^{\bot}$. Since the number of indecomposable summands of $T$ agrees with the rank of the Grothendieck group of $\zvect\X$, which equals to $p_1+p_2+p_3$ for $\X=\X(p_1,p_2,p_3)$. It follows that $\Cc ^{\bot}=0$, hence $T$ generates $\Cc$. Therefore, $T$ is a tilting object in $\zvect\X$.

Finally, by using Proposition \ref{non-zero morphisms between line bundles}, it is easy to see that
the endomorphism algebra of the tilting object $T$ is isomorphic to the desired Nakayama algebra in each case.
This finishes the proof of Theorem \ref{tiltingobjects}.
\end{proof}

\begin{remark}
  (1) For $\X$ of weight type (2,4,5), we have $4\vw=\vx_3$. Let $L=\co(\vx_2)$, then $L(k\vx_3)\, (k\in\mathbb{Z})$ belong to the same $\tau$-orbit. Moreover, $\tau L[1]=L(16\vw)=L(4\vx_3)$, see Appendix \ref{subsection (245)}.
Hence $T_{(2,4,5)}$ can be written as $$T_{(2,4,5)}=\big(\bigoplus\limits_{k=0}^{2}\bigoplus\limits_{a=0}^{2}\bS^k(L(a\vx_3))\big)\oplus \big( \bigoplus\limits_{k=0}^{1}\bS^k(L(3\vx_3))\big). $$

  (2) For $\X$ of weight type (2,4,7), we have $4\vw=3\vx_3$. Let $L=\co(\vx_2)$, then $L(3k\vx_3)\, (k\in\mathbb{Z})$ belong to the same $\tau$-orbit. Moreover, $\tau L[1]=L(7\vx_3)$ and $\tau L(\vx_3)[1]=L(9\vx_3)$, see Appendix \ref{subsection (247)}.
Hence $T_{(2,4,7)}$ can be written as $$T_{(2,4,7)}=\big(\bigoplus\limits_{k=0}^{2}\bS^k(L\oplus L(\vx_3)\oplus L(3\vx_3))\big)\oplus \big( \bigoplus\limits_{k=0}^{1}\bS^k(L(4\vx_3)\oplus L(6\vx_3))\big). $$
\end{remark}

\begin{conjecture} \label{conj:257 and 266 and 2311(derived version)} We expect that the following statements hold:
\begin{enumerate}
  \item $N_{14}(4)$ has Fuchsian type $\fuchs{2,5,7}$;
  \item $N_{14}(5)$ has Fuchsian type $\fuchs{2,6,6}$;
  \item $N_{16}(8)$ has Fuchsian type $\fuchs{2,3,11}$.
\end{enumerate}
\end{conjecture}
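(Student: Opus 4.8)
The plan is to treat all three items by the method of Theorem~\ref{tiltingobjects}. By \cite{Lenzing:Pena:2011} the stable category $\zvect\X(a,b,c)$ is triangle equivalent to the bounded derived category of the extended canonical algebra, that is, to the Fuchsian type $\fuchs{a,b,c}$; hence it suffices to produce, in each of the weight types $(2,5,7)$, $(2,6,6)$ and $(2,3,11)$, a tilting object $T$ in $\zvect\X$ whose indecomposable summands are line bundles up to suspension shift and whose endomorphism algebra $\sEnd(T)$ is the prescribed Nakayama algebra. The first sanity check is the rank count: the Grothendieck group of $\zvect\X(p_1,p_2,p_3)$ has rank $p_1+p_2+p_3$, giving $2+5+7=14$, $2+6+6=14$ and $2+3+11=16$, which match the vertex numbers of $N_{14}(4)$, $N_{14}(5)$ and $N_{16}(8)$ respectively.

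For the construction I would follow the patterns of Theorem~\ref{tiltingobjects}. Item~(2) is the most transparent, since $N_{14}(5)$ of type $\fuchs{2,6,6}$ continues the family consisting of $N_{12}(5)$ of type $\fuchs{2,5,5}$ and $N_{13}(5)$ of type $\fuchs{2,5,6}$ settled in parts~(3) and~(4), all with nilpotency degree $5$; here I would take $L=\co(\vx_3)$ and enlarge the arrays $\bigoplus_{k,a}\bS^{k}(L(a\vx_3))$ used there until $14$ summands arrange, under the action of $\bS$, into the equioriented $A_{14}$-quiver with relation $x^{5}=0$. For item~(1) I would take $L=\co(\vx_2)$ as in the weight types $(2,4,5)$ and $(2,4,7)$ and seek an analogous array of $14$ line bundles realizing $N_{14}(4)$ with $x^{4}=0$; for item~(3) I would use the twists $\co(a\vx_3)$ and their $\bS$-shifts, guided by the tilting complex of Proposition~\ref{tilting realization} in $\coh\X(2,3,r)$, to assemble $16$ summands realizing $N_{16}(8)$ with $x^{8}=0$. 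In every case the nonzero stable morphisms are dictated by Proposition~\ref{non-zero morphisms between line bundles}.

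Once a candidate $T$ is written down, the verification proceeds exactly as for Theorem~\ref{tiltingobjects}. First one checks that $T$ is extension-free, summand by summand, using the projective covers of Proposition~\ref{projective cover} together with the nonvanishing criteria of Proposition~\ref{non-zero morphisms between line bundles}; this is a case analysis of the same kind as Propositions~\ref{extension-free 245}--\ref{extension-free 256} and would be relegated to the appendix. Next, since the summands are line bundles up to shift they can be ordered into an exceptional sequence, so the triangulated subcategory $\Cc$ they generate is such that $\zvect\X$ is generated by $\Cc$ together with its right orthogonal $\rperp{\Cc}$, by \cite{Bondal:Kapranov:1989}; as the number of summands equals the rank $p_1+p_2+p_3$ of $K_0(\zvect\X)$, the orthogonal $\rperp{\Cc}$ vanishes and $T$ is a tilting object. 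Finally, reading off $\sHom$ between the summands via Proposition~\ref{non-zero morphisms between line bundles} identifies $\sEnd(T)$ with the required Nakayama algebra, and the Fuchsian type follows.

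A second, more structural route is available, generalizing Corollary~\ref{familis Nakayama corollary} and Proposition~\ref{N-r+6-r}: realize each algebra as an iterated one-point extension of a coherent-sheaf base. For item~(3) this is cleanest. Starting from $N_{12}(8)$, which by Proposition~\ref{tilting realization} is derived equivalent to $\coh\X(2,3,8)$, one adjoins three simple torsion sheaves at the exceptional point $(x_3)$, each raising the last weight by one as in the proof of Proposition~\ref{N-r+6-r}, reaching $\coh\X(2,3,11)$, and then one line bundle, passing to $\fuchs{2,3,11}$; after an APR reorganization of the injective tilting object, these four one-point extensions recover $N_{16}(8)$. At each step the extending module must be identified through the rank and determinant data of Remark~\ref{identity}: rank $0$ and determinant $\vx_3$ certify a simple torsion sheaf at $(x_3)$, while rank $1$ certifies the final line bundle. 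The main obstacle lies precisely here. In the direct approach the extension-free case analysis multiplies rapidly for the large weights $p_3=11$ and $p_2=p_3=6$, where the set $\Ss$ of Proposition~\ref{rep for line bundle orbits} and the degree-shift action of $\bS$ produce many subcases; in the inductive approach one must control the entire tower, re-identifying after each insertion which indecomposable the successive APR tilts and perpendicular reductions produce, and for items~(1) and~(2) the insertions occur at two different exceptional points, so their order must be chosen compatibly. This bookkeeping, rather than any single hard equivalence, is what keeps the three statements at the level of a conjecture; as an independent check one may match the Coxeter polynomials of the candidate Nakayama algebras against those of the corresponding extended canonical algebras, using the derived invariance of the Coxeter polynomial and the computations already tabulated.
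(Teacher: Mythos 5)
The statement you are asked to prove is, in the paper itself, explicitly a \emph{conjecture}: the authors give no proof of it, and Theorem \ref{classification-of-Fuchsian-nakayama-cat} is stated only \emph{under} Conjectures \ref{conj:257 and 266 and 2311(derived version)} and \ref{conj:Fuchsian:perp}. So there is no paper proof to compare against; what can be compared is your plan versus the authors' stated reasons for leaving the statement open. Your outline correctly reproduces the general template of Theorem \ref{tiltingobjects} (exceptional sequence of line bundles up to shift, rank count $p_1+p_2+p_3$ against the number of summands, endomorphism algebra read off from Proposition \ref{non-zero morphisms between line bundles}), and your rank counts $14$, $14$, $16$ are right. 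But the plan contains no actual candidate tilting object for any of the three weight types, and the step you defer to ``a case analysis of the same kind as Propositions \ref{extension-free 245}--\ref{extension-free 256}'' is precisely the step that cannot currently be carried out. The paper is explicit about why: for $(2,4,5)$, $(2,4,7)$, $(2,5,5)$, $(2,5,6)$ the Fuchsian singularity $R=S|_{\ZZ\vw}$ is a hypersurface, and the matrix-factorization description gives $[2]$ as an explicit degree shift $(d\vw)$; every extension-freeness argument in Appendix A is built on that formula together with the explicit projective covers. For $(2,5,7)$, $(2,6,6)$ and $(2,3,11)$ the singularity is a complete intersection that is \emph{not} a hypersurface, so no such description of the suspension functor is available, and the computational apparatus does not merely get longer --- it is not set up at all. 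Calling the remaining obstacle ``bookkeeping'' therefore mislocates the gap: the missing ingredient is a usable formula for $[1]$ (equivalently, for syzygies over a non-hypersurface $R$) in $\zvect\X$.

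Your second, inductive route for item (3) has the same status. The one-point-extension argument of Proposition \ref{N-r+6-r} performs exactly one torsion-sheaf insertion followed by one line-bundle insertion, and it relies on the rank and determinant data of Remark \ref{identity}, which are computed only for the specific tilting complex of Proposition \ref{tilting realization} over $\coh\XX(2,3,r)$. To reach $N_{16}(8)$ from $N_{12}(8)$ you need four successive extensions, and after the first insertion you are working with a different tilting object over $\coh\XX(2,3,9)$ whose rank/determinant bookkeeping must be re-derived from scratch; nothing in the paper guarantees that the relevant extending modules at stages two and three are again simple torsion sheaves concentrated at $(x_3)$. The only evidence the paper offers for the conjecture is the coincidence of Coxeter polynomials between the Nakayama algebras and the corresponding extended canonical algebras, and the paper is careful to note that the sufficiency of this invariant is itself Conjecture \ref{conj:Coxeter polynomial and derived equivalence}, i.e.\ also open. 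In short: your proposal is a sensible research programme that matches the authors' own intended strategy, but it proves nothing, and it understates the structural obstruction that keeps the statement conjectural.
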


For the weight types appearing in Theorem \ref{tiltingobjects}, the associated Fuchsian singularities are all hypersurfaces, hence the second suspension functor [2] of $\zvect\X$ are given by certain degree shifts by using matrix factorization approach. This observation plays an important role during our proof for an object being extension-free in $\zvect\X$. However, for the weight types appearing in Conjecture \ref{conj:257 and 266 and 2311(derived version)}, the associated Fuchsian singularities are complete intersections but not hypersurfaces, hence the suspension functors of $\zvect\X$ do not have nice description sofar, which makes it much more difficult to find a tilting object in $\zvect\X$ whose endomorphism algebra is isomorphic to the required Nakayama algebra.


Nevertheless,
There is an important evidence to support Conjecture \ref{conj:257 and 266 and 2311(derived version)}. In fact, for each of the weight types $(2,5,7)$, $(2,6,6)$ or $(2,3,11)$, the stable category $\zvect\X$ (or equivalently, the related extended canonical algebra) and the associated Nakayama algebra share the same Coxeter polynomial. We believe that the Coxeter polynomials are complete invariants under derived equivalences for certain classes of algebras. In fact, we have the following conjecture.

\begin{conjecture} \label{conj:Coxeter polynomial and derived equivalence} Two Nakayama algebras $N_{n}(a)$ and $N_{n}(b)$ are derived equivalent if and only if they share the same Coxeter polynomial.
\end{conjecture}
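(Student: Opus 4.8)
The plan is to split the biconditional into its two directions, only one of which is genuinely difficult. The forward implication ``derived equivalent $\Rightarrow$ equal Coxeter polynomial'' is formal: a derived equivalence $D^b(N_n(a))\simeq D^b(N_n(b))$ induces an isometry of the Grothendieck groups equipped with the Euler form, and under this isometry the two Coxeter transformations $\phi_{N_n(a)}$ and $\phi_{N_n(b)}=-C^{-t}C$ correspond, since both represent the action of the inverse Serre functor $\bS^{-1}$ (equivalently the Auslander--Reiten translation) on $K_0$. Conjugate matrices share the same characteristic polynomial, whence $\chi_{(n,a)}=\chi_{(n,b)}$. I would merely cite the references on Coxeter polynomials already collected in the paper for this step.

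The substance lies in the reverse direction, where the restriction to a fixed family $\{N_n(r)\}_r$ with $n$ held constant is essential, the Coxeter polynomial being far from a complete derived invariant in general. First I would invoke the explicit closed formula for $\chi_{(n,r)}$ established in Section 6 and write it as a product of cyclotomic polynomials, that is, record the multiset of its roots of unity together with multiplicities. The task then becomes purely arithmetic: for each fixed $n$, classify all pairs $(a,b)$ with $2\le a<b\le n$ for which the two cyclotomic products $\chi_{(n,a)}$ and $\chi_{(n,b)}$ coincide as polynomials.

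Next I would realize every such coincidence by an actual derived equivalence coming from the extended Happel--Seidel symmetry of Proposition~\ref{HS-symmetry}, which yields $N_m(a)\simeq N_m(b)$ whenever $m\in\{N-1,N,N+1\}$ for $N=(a-1)(b-1)$. The equalities recorded in Section~2, namely $\chi_{(16,9)}=\chi_{(16,3)}$ with $16=(3-1)(9-1)$ and $\chi_{(15,4)}=\chi_{(15,6)}$ with $15=(4-1)(6-1)$, are exactly of this shape. The conjecture then amounts to the statement that these symmetry-predicted equalities, together with the elementary identification $N_n(2)\simeq N_n(n)$ of hereditary type $\mathbb{A}_n$ and compositions of such symmetries, already exhaust all coincidences of Coxeter polynomials inside the family.

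The hard part is the \emph{completeness} half of the arithmetic classification: one must show that the cyclotomic products $\chi_{(n,a)}$ admit no accidental equality beyond those forced by the relation $n\in\{(a-1)(b-1),\,(a-1)(b-1)\pm1\}$. This is precisely where the argument is open. Deciding when two such products of cyclotomic polynomials agree requires controlling which roots of unity occur with which multiplicities, and excluding sporadic coincidences appears to demand either a factorization formula for $\chi_{(n,r)}$ finer than what is presently available, or a number-theoretic input on vanishing sums of roots of unity. I therefore expect this completeness step, rather than the realization of the already-known coincidences, to be the genuine obstacle, which is consistent with the result being recorded only as a conjecture.
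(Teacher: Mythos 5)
First, a point of order: the paper itself offers no proof of this statement. It is recorded as an open conjecture, and the only evidence the authors supply is that the equivalence is known to hold within the subclasses of module, sheaf and triangle type (citing Kussin--Lenzing--Meltzer and Hille--M\"uller) and, as a consequence of Theorem \ref{classification-of-Fuchsian-nakayama-cat}, within the Fuchsian class. So there is no proof to compare yours against, and your proposal --- which you candidly leave open at the arithmetic completeness step --- does not close the gap either. Your forward direction is correct and standard: a derived equivalence induces an isometry of the Grothendieck groups with their Euler forms, conjugating the Coxeter matrices and hence preserving the characteristic polynomial.

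Your plan for the converse, however, has a second gap beyond the one you acknowledge. You assert that every coincidence of Coxeter polynomials at fixed $n$ should be realized by compositions of the extended Happel--Seidel symmetries of Proposition \ref{HS-symmetry}. This is already false for derived equivalences the paper establishes: one has $\Nak{9}{7}\simeq[2,3,6]\simeq(2,3,5)\simeq\Nak{9}{3}$, yet $(3-1)(7-1)=12$, so those symmetries relate $r=3$ and $r=7$ only at $n\in\{11,12,13\}$; moreover at $n=9$ there is no $c$ with $6(c-1)\in\{8,9,10\}$, so $r=7$ is not linked to \emph{any} other $r$ by such symmetries at $n=9$. Realizing all known coincidences therefore already requires the full machinery of the paper (tilting complexes in $\Der{\coh\XX}$, tilting objects in $\zvect\XX$, perpendicular categories and one-point extensions), and for some weight types even rests on the unproved Conjecture \ref{conj:257 and 266 and 2311(derived version)}. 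Combined with the completeness problem you correctly identify --- excluding accidental equalities among the cyclotomic products $\chi_{(n,r)}$, for which the paper has no closed formula valid for all $(n,r)$ (Proposition \ref{Coxeter numbers} treats only $N_{r+7}(r)$ with $r\geq 9$) --- the statement remains genuinely open, which is precisely why the authors record it as a conjecture rather than a theorem.
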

Conjecture \ref{conj:Coxeter polynomial and derived equivalence} holds true between Nakayama algebras of module type, of sheaf type or of triangle type, see \cite{Kussin:Lenzing:Meltzer:2013adv} and \cite{Hille:Mueller:2014}.

\section{Classification of Fuchsian Nakayama categories}

In this section, we will give the complete classification of
Nakayama categories $\Nak{n}{r}:=D^b(N_{n}(r))$ of Fuchsian type.

First let us consider the Coxeter polynomials for a class of Nakayama algebras $N_{r+7}(r)$, which are of interest in their own right.

\begin{proposition}\label{Coxeter numbers}
For any $r\geq 9$, the Coxeter polynomial of $N_{r+7}(r)$ is given by $$\chi_{(r+7,r)}=(\lambda+1)(\lambda^6-\lambda^3+1)(\lambda^r+1).$$
Consequently, the Coxeter number of $N_{r+7}(r)$ is given by $\lcm(2r,9)$.
\end{proposition}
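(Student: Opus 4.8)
The plan is to compute the Coxeter polynomial $\chi_{(r+7,r)}$ directly from the combinatorial structure of the Nakayama algebra $N_{r+7}(r)$, exploiting the very regular shape of its Cartan matrix. Recall that $N_n(r)$ has the equioriented $A_n$ quiver with relations $x^r = 0$, so the indecomposable projective $P(i)$ has top $S_i$ and a uniserial structure descending $\min(r, i)$ steps. Concretely, for $n = r+7$ the Cartan matrix $C$ has entries $c_{ij} = 1$ exactly when $0 \le i - j \le r-1$ (with the appropriate truncation near the boundary), so $C$ is a banded lower-triangular $0/1$ matrix with bandwidth $r$. First I would write down this Cartan matrix explicitly and observe that the Coxeter transformation $\phi = -C^{-t}C$ acts by an almost-shift; the cleanest route is \emph{not} to invert $C$ by hand but to pass to the recursion satisfied by the characteristic polynomials of the family $\chi_{(n,r)}$ as $n$ grows with $r$ fixed.

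The key technical step is to establish and solve a linear recurrence in $n$ (for fixed $r$) for the Coxeter polynomials $\chi_{(n,r)}$. Such recurrences are standard for equioriented $A_n$ Nakayama algebras: adding one vertex corresponds to a one-point extension $N_{n+1}(r) = N_n(r)[I_{n+2-r}]$ as noted in the preliminaries, and this induces a controlled change on the Coxeter matrix. The expected outcome is a recurrence of the form $\chi_{(n,r)} = (\lambda+1)\,\chi_{(n-1,r)} - \lambda\,\chi_{(n-2,r)}$ or a slightly longer constant-coefficient recurrence whose characteristic data reflect the nilpotency bound $r$. I would then verify the claimed closed form
\[
\chi_{(r+7,r)} = (\lambda+1)(\lambda^6 - \lambda^3 + 1)(\lambda^r + 1)
\]
by checking it satisfies the recurrence together with enough initial cases; the factor $(\lambda^6 - \lambda^3 + 1) = \Phi_{18}(\lambda)$ and $(\lambda^r+1)$ strongly suggest the degrees organize into cyclotomic blocks, which I would use as a guide rather than deriving from scratch. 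The entries $\chi_{(16,9)} = \chi_{(16,3)} = (\lambda+1)(\lambda^6-\lambda^3+1)(\lambda^9+1)$ and $\chi_{(15,8)}$, $\chi_{(14,7)}$ already listed in the Coxeter-polynomial subsection give me several checkpoints for the case $r = 7, 8, 9$, confirming the pattern at the boundary of validity $r \ge 9$.

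For the consequence about the Coxeter number, I would argue that $\phi^m = E$ iff $\lambda^m - 1$ is annihilated by every root of $\chi_{(r+7,r)}$, i.e. iff every root is a root of unity whose order divides $m$. The roots coming from $(\lambda^6 - \lambda^3 + 1)$ are primitive $18$th roots of unity, the root of $(\lambda + 1)$ is a primitive $2$nd root, and the roots of $(\lambda^r + 1)$ are primitive $2r$th roots (more precisely roots of unity of orders dividing $2r$ but not $r$). The Coxeter number is therefore the least common multiple of the orders of all these roots, namely $\lcm(2, 18, 2r) = \lcm(18, 2r)$. Since $18 \mid \lcm(2r,9)$ when we account for the factor $2$ in $2r$, this simplifies to $\lcm(2r, 9)$, and I would record the short divisibility check that $\lcm(18, 2r) = \lcm(2r, 9)$ for all $r$ (both equal $2\cdot\lcm(r,9)/\gcd$-type simplifications; concretely $\lcm(18,2r)=2\,\lcm(9,r)=\lcm(2r,9)$ once the factor of $2$ is distributed).

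The main obstacle I anticipate is establishing the recurrence cleanly and, in particular, verifying that the closed form holds exactly at $n = r+7$ rather than at some neighboring value — the Coxeter polynomials of $N_n(r)$ depend delicately on $n-r$, and the factorization changes character as $n-r$ crosses the thresholds visible in Figure \ref{Figure1} (the transition into the Fuchsian ``red wall'' happens around $n - r = 5$). Thus the restriction $r \ge 9$ is essential: it guarantees that $N_{r+7}(r)$ sits in the stable regime where the cyclotomic factorization $(\lambda+1)\Phi_{18}(\lambda)(\lambda^r+1)$ is valid, and I would need to confirm that the general-$n$ recurrence specializes correctly here, rather than assuming the pattern extrapolates from the small sporadic cases listed earlier.
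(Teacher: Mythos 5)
Your overall strategy differs from the paper's, and as written it has a genuine gap at its center. The paper does not use a recurrence at all: it writes out $C^{-t}C$ for $N_{r+7}(r)$ explicitly, forms $\Delta=\lambda E_{r+7}+C^{-t}C$, and evaluates $\det(\Delta)$ by a Laplace expansion along the block of rows indexed by $9,10,\ldots,r-1$, showing that all complementary minors vanish except the two extreme ones, whose values $(\lambda+1)(\lambda^6-\lambda^3+1)$ and $\lambda^9(\lambda+1)(\lambda^6-\lambda^3+1)$ assemble directly into $(\lambda+1)(\lambda^6-\lambda^3+1)(\lambda^r+1)$. Your plan instead rests on ``establish and solve a linear recurrence in $n$ for fixed $r$,'' but you never establish it: you only guess that it is $\chi_{(n,r)}=(\lambda+1)\chi_{(n-1,r)}-\lambda\chi_{(n-2,r)}$ ``or a slightly longer constant-coefficient recurrence.'' This is the entire technical content of the proposition and cannot be left as an expectation; for one-point extensions the change in the Coxeter polynomial depends on the class of the extending module in the Grothendieck group, so a clean constant-coefficient recurrence in $n$ is not automatic and its length/coefficients would have to be derived from the band structure of $C$ (i.e., from $r$). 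Moreover, even granting such a recurrence, the initial data you would need are closed forms for $\chi_{(r+5,r)}$ and $\chi_{(r+6,r)}$ (say) \emph{for every} $r\geq 9$ --- infinite families in their own right. The checkpoints you cite, $\chi_{(16,9)},\chi_{(15,8)},\chi_{(14,7)}$, all sit at $n-r=7$: they are finitely many instances of the conclusion itself (for $r=9,8,7$), not initial conditions for a fixed-$r$ recurrence, so the verification step is circular as described.

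A secondary inaccuracy: you assert that the restriction $r\geq 9$ is ``essential'' because it places $N_{r+7}(r)$ in a stable regime where the cyclotomic factorization holds. The paper's own computations $\chi_{(14,7)}$ and $\chi_{(15,8)}$ show the identical factorization at $r=7$ and $r=8$; the hypothesis $r\geq 9$ in the proposition is an artifact of the bookkeeping in the block decomposition (the row list $(9,\ldots,r-1)$ must make sense), not a genuine phase transition. Your derivation of the Coxeter number from the factorization --- orders $2$, $18$, and $2r$ of the roots, giving $\lcm(2,18,2r)=\lcm(2r,9)$ --- is correct and coincides with the paper's. To repair the proof you should either carry out the determinant computation directly (as the paper does) or actually derive and prove the recurrence from the explicit form of $\phi_{(n,r)}=-C^{-t}C$ together with proved closed forms for the required initial families.
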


\begin{proof}
Denote the Cartan matrix of $N_{r+7}(r)$ by $C$. Then by direct computation we have
$$C^{-t}C=\begin{bmatrix}O_{6\times8}&\begin{pmatrix}O_{6\times(r-7)}&-E_6\end{pmatrix}\\
O_{1\times8}&\varepsilon_{r-1}\\
\begin{pmatrix}O_{(r-8)\times8}\\
E_8\end{pmatrix}&\begin{pmatrix}-E_{r-1}\\
\varepsilon_{r-1}\end{pmatrix}\end{bmatrix},$$
where $E_n$ denotes the identity matrix of size $n\times n$, $O_{m\times n}$ denotes the zero matrix of size $m\times n$, and $\varepsilon_{r-1}$ denotes the row vector $(1,1,1,\cdots,1)$ with $(r-1)$-entries.
By definition, the Coxeter polynomial $\chi_{(r+7,r)}$ of $N_{r+7}(r)$ is the determinant of the matrix $$\Delta:=\lambda E_{n+r}-\Phi_{(n+r,r)}=\lambda E_{n+r}+C^{-t}C.$$

Let ${\bf{r}}=(9,10,\cdots,r-1)$ be a list of $(r-9)$-row indices and ${\bf{c}}_i=(9,10,\cdots,\hat{i},\cdots,r-1,r) \; (9\leq i\leq r)$ be lists of $(r-9)$-column indices of $\Delta$.
For any $9\leq i\leq r$, the submatrix $\Delta({\bf{r}},{\bf{c}}_i)$, obtained by keeping the entries in the intersection of any row in the list ${\bf{r}}$ and any column in the list ${\bf{c}}_i$, has the form $$\Delta({\bf{r}},{\bf{c}}_i)=\begin{pmatrix}J_i&0
\\0&J_{i}'\end{pmatrix},$$
where $J_i$ is an $(i-9)\times(i-9)$ matrix of the following form:
$$J_i=\begin{pmatrix}\lambda&-1&0&\cdots&0\\
0&\lambda&-1&\ddots&\vdots\\
\vdots&\ddots&\ddots&\ddots&0\\
\vdots&&\ddots&\ddots&-1\\
0&\cdots&\cdots&0&\lambda\end{pmatrix},$$
and $J_i'$ is an $(r-i)\times(r-i)$ matrix of the following form:
 $$J_i'=\begin{pmatrix}-1&0&0&\cdots&0\\
\lambda&-1&\ddots&&\vdots\\
0&\ddots&\ddots&\ddots&\vdots\\
\vdots&\ddots&\ddots&\ddots&0\\
0&\cdots&0&\lambda&-1\end{pmatrix}.$$
Hence, the determinant $$\textup{det}(\Delta({\bf{r}},{\bf{c}}_i))=\textup{det}(J_i)\cdot\textup{det}(J_i')=(-1)^{r-i}\lambda^{i-9}.$$
The complementary submatrix $\Delta'({\bf{r}},{\bf{c}}_i)$, obtained by removing the entries in the rows in the list ${\bf{r}}$ and columns in the list ${\bf{c}}_i$, can be calculated as follows according to $9\leq i\leq r$.

For $i=9$, we have $$\Delta'({\bf{r}},{\bf{c}}_i)=\begin{pmatrix}\lambda E_8&M\\E_8&N\end{pmatrix},$$ where
 $$M=\begin{pmatrix}0&0&-1&0&0&0&0&0\\
0&0&0&-1&0&0&0&0\\
0&0&0&0&-1&0&0&0\\
0&0&0&0&0&-1&0&0\\
0&0&0&0&0&0&-1&0\\0&0&0&0&0&0&0&-1\\1&1&1&1&1&1&1&1\\
-1&0&0&0&0&0&0&0\end{pmatrix}$$ and $$N=\begin{pmatrix}0&-1&0&0&0&0&0&0\\
0&\lambda&-1&0&0&0&0&0\\
0&0&\lambda&-1&0&0&0&0\\
0&0&0&\lambda&-1&0&0&0\\0&0&0&0&\lambda&-1&0&0\\0&0&0&0&0&\lambda&-1&0\\
0&0&0&0&0&0&\lambda&-1\\
1&1&1&1&1&1&1&\lambda+1\end{pmatrix}.$$
Hence $$\textup{det}(\Delta'({\bf{r}},{\bf{c}}_9))=\begin{vmatrix}\lambda E_8&M\\E_8&N\end{vmatrix}=\begin{vmatrix}\lambda N-M\end{vmatrix}=(\lambda+1)(\lambda^6-\lambda^3+1).$$
For $10\leq i\leq r-1$, we have $$\Delta'({\bf{r}},{\bf{c}}_i)=\begin{pmatrix}\lambda E_8&M+E_{8,1}\\E_8&N\end{pmatrix},$$ where $E_{i,j}$ denotes the elementary $8\times 8$ matrix with a unique nonzero entry 1 in the $(i,j)$-position. By a direct calculation we get
$$\textup{det}(\Delta'({\bf{r}},{\bf{c}}_i))=
\begin{vmatrix}\lambda N-(M+E_{8,1})\end{vmatrix}=0.$$
For $i=r$, we have $$\Delta'({\bf{r}},{\bf{c}}_r)=\begin{pmatrix}\lambda E_8&M+E_{8,1}\\E_8&N+\lambda E_{1,1}\end{pmatrix}.$$ Hence
$$\textup{det}(\Delta'({\bf{r}},{\bf{c}}_r))
=\begin{vmatrix}\lambda (N+\lambda E_{1,1})-(M+E_{8,1})\end{vmatrix}=\lambda^9(\lambda+1)(\lambda^6-\lambda^3+1).$$


Therefore, by Laplace Expansion Theorem we obtain \begin{align}\textup{det}(\Delta)&=(-1)^{9+10+\cdots+(r-1)}\sum_{i=9}^r
(-1)^{9+10+\cdots+\hat{i}+\cdots+r}\cdot\textup{det}(\Delta({\bf{r}},{\bf{c}}_i))\cdot
\textup{det}(\Delta'({\bf{r}},{\bf{c}}_i))
\nonumber\\
&=\sum_{i=9}^r(-1)^{r+i}\cdot(-1)^{r-i}\lambda^{i-9}\cdot\textup{det}(\Delta'({\bf{r}},{\bf{c}}_i))\nonumber\\
&=(\lambda+1)(\lambda^6-\lambda^3+1)+\lambda^{r-9}\cdot\lambda^9(\lambda+1)(\lambda^6-\lambda^3+1).\nonumber\\
&=(\lambda+1)(\lambda^6-\lambda^3+1)(\lambda^r+1).\nonumber\end{align}

Note that each root $\xi$ of the polynomial $\lambda^6-\lambda^3+1$ satisfies $\xi^9+1=0$, hence $\xi$ has order 18. Similarly, each root of $\lambda^r+1$ has order $2r$. It follows that the Coxeter number of $N_{r+7}(r)$ is given by $\lcm(2, 18, 2r)=\lcm(9, 2r)$.
\end{proof}

In order to classify the Fushsian Nakayama categories, we state the following conjecture.

\begin{conjecture} \label{conj:Fuchsian:perp} We expect that each of the following two properties is satisfied.
\begin{itemize}
\item[(F1)] Each connected perpendicular subcategory of a Fuchsian singularity category is either Fuchsian or piecewise hereditary.
\item[(F2)] If a Nakayama category $\Nak{n}{r}$ is Fuchsian, then $\Nak{n-1}{r}$ is either Fuchsian or piecewise hereditary.
\end{itemize}
\end{conjecture}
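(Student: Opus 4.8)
The plan is to deduce (F2) from (F1) and then to concentrate on (F1). Assume $\Nak{n}{r}$ is Fuchsian. The one-point extension structure $\nak{n}{r}=\nak{n-1}{r}[M]$ underlying the perpendicular reduction of Lemma~\ref{lemma:perp} shows that the last indecomposable projective $P_n$ is an exceptional object of $\Nak{n}{r}$ and that its right perpendicular category $\rperp{P_n}$ is triangle equivalent to $\Nak{n-1}{r}$, independently of any piecewise hereditary hypothesis. Since $\nak{n-1}{r}$ is connected, so is $\rperp{P_n}$; hence (F1), applied to the exceptional object $P_n$ inside the Fuchsian singularity category $\Nak{n}{r}$, at once gives that $\Nak{n-1}{r}$ is Fuchsian or piecewise hereditary. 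The whole conjecture therefore reduces to (F1).

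For (F1) I would first pass to a normal form, realising the Fuchsian singularity category as $\zvect\X\simeq\Der{A}$, where $A=C(p_1,p_2,p_3)[P]$ is the associated extended canonical algebra and $\chi_{\X}<0$. Given a connected perpendicular subcategory $\Cc=\rperp{\{E_1,\dots,E_s\}}$ cut out by an exceptional sequence, I would extend $(E_1,\dots,E_s)$ to a full exceptional sequence of $\zvect\X$; this is possible because $\zvect\X$ admits a tilting object and its Grothendieck group has finite rank $p_1+p_2+p_3$. By Bondal--Kapranov the inclusion $\Cc\incl\zvect\X$ has a left adjoint, and the complementary members of the full sequence furnish a tilting object of $\Cc$. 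Thus $\Cc\simeq\Der{B}$ for a finite-dimensional algebra $B$ with a tilting object, and the problem becomes to identify $B$.

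The heart of the matter is to show that the class $\mathcal{F}$ consisting of the Fuchsian singularity categories together with the piecewise hereditary categories admitting a tilting object is \emph{closed under passing to connected perpendicular subcategories}. For the piecewise hereditary members this closure is supplied by the Geigle--Lenzing perpendicular calculus already invoked in Lemma~\ref{lemma:perp}, so the new content lies in the Fuchsian case. Here I would use the geometric model of $\zvect\X$: removing an exceptional object should either recover $\Der{\coh\X}$ of sheaf type (this is precisely the passage from $\fuchs{2,4,5}$ to $\can{2,4,5}$ effected by perpendicularity to the one-point-extension object in the proof of Theorem~\ref{classification-of-piecewise-hereditary-nakayama-cat}), or reduce a single weight $p_i\mapsto p_i-1$ to produce $\zvect{\X'}$ with $\X'=\X(\dots,p_i-1,\dots)$, which is again Fuchsian when $\chi_{\X'}<0$ and piecewise hereditary (tubular or domestic) when $\chi_{\X'}\geq 0$. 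To pin down the precise weighted projective line or hereditary star that occurs, I would compare the discrete invariants preserved by the left adjoint: the rank of the Grothendieck group, the fractional Calabi--Yau data carried by the Serre functor $\bS=\tau[1]$, and the Coxeter polynomial computed as in Proposition~\ref{Coxeter numbers}.

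The main obstacle will be to prove that \emph{every} connected perpendicular subcategory arises from such a controlled reduction, with no exotic possibilities. Because all Auslander--Reiten components of $\vect\X$, and hence of $\zvect\X$, have type $\ZZ\AA_\infty$, there are infinitely many exceptional objects and enumeration is hopeless; one needs a structural argument, for example the invariance of $\mathcal{F}$ under the braid-group action on exceptional sequences together with a reduction of each sequence to a normal form adapted to the line-bundle configuration of $\X$. The deeper difficulty is that $\zvect\X$ is a non-hereditary, Gorenstein triangulated category (of global dimension two on the algebra side), so the classical perpendicular theory for hereditary abelian categories does not transfer directly; a genuine proof would require an analogue of that theory compatible with the $\tau^{\Z}\co$-Frobenius structure, controlling how the Euler form and the Serre functor restrict to an admissible subcategory. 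It is exactly at this step that the statement remains conjectural.
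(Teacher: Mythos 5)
The statement you are addressing is stated in the paper as a \emph{conjecture}, and the paper offers no proof of it: it only records that $(F1)$ trivially implies $(F2)$, notes that every Fuchsian singularity category admits an orthogonal decomposition $\langle\Der{\coh\XX},E\rangle$ for a weighted projective line $\XX$ and an exceptional object $E$ (suggesting that $(F1)$ might follow by a ``descent argument''), and reports experimental verification of $(F2)$ for $\Nak{s+r}{r}$ with $3\leq r\leq 50$, $2\leq s\leq 50$. Measured against that, the first paragraph of your proposal is correct and is exactly the content the paper itself asserts: the equivalence $\rperp{P_n}\simeq\Nak{n-1}{r}$ holds with no piecewise hereditary hypothesis (as in Lemma~\ref{lemma:perp}), $\rperp{P_n}$ is connected because $\nak{n-1}{r}$ is, and so $(F1)$ applied to $P_n$ yields $(F2)$. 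Your sketch for $(F1)$ --- normalizing to $\zvect\XX\simeq\Der{C(p_1,p_2,p_3)[P]}$, completing the exceptional sequence, and trying to show the class of Fuchsian-or-piecewise-hereditary categories is closed under connected perpendicular reduction --- is the same descent strategy the paper gestures at via the decomposition $\langle\Der{\coh\XX},E\rangle$, and your use of Coxeter-polynomial and Serre-functor invariants to pin down the resulting type is consistent with how the paper identifies types elsewhere.

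The genuine gap is the one you name yourself, and it is not repaired anywhere in the paper either: you have no argument that \emph{every} connected perpendicular subcategory of $\zvect\XX$ arises from one of your ``controlled reductions'' (deleting the extension vertex to land in $\Der{\coh\XX}$, or lowering a single weight). Since the AR components of $\zvect\XX$ are all of type $\ZZ\AA_\infty$, there are infinitely many exceptional objects, and the braid-group normalization of exceptional sequences that you invoke is precisely the missing ingredient; the classical Geigle--Lenzing perpendicular calculus is only available for hereditary abelian categories, whereas $\zvect\XX$ is not hereditary. So your proposal establishes the reduction of $(F2)$ to $(F1)$ --- which is all the paper claims --- and otherwise remains, like the paper's own discussion, a plausible plan rather than a proof. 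It would strengthen your write-up to state explicitly at the outset that the target is an open conjecture and that only the implication $(F1)\Rightarrow(F2)$ is being proved.
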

Clearly $(F1)$ implies $(F2)$. In support of Conjecture \ref{conj:Fuchsian:perp}, we first note each Fuchsian singularity category has an orthogonal decomposition $\langle\Der{\coh\XX},E\rangle$ for some weighted projective line $\X$ and an exceptional object $E$, as follows from \cite{Lenzing:Pena:2011}. It is thus conceivable that $(F1)$ can be derived from this property using a descent argument. Second, the experimental results collected in Figure~\ref{figure:green:wall} show that Fuchsian Nakayama categories appear in intervals ending --- reading from right to left --- in piecewise hereditary categories, actually in such categories of sheaf type. Indeed, on an experimental level, $(F2)$ is satisfied by all Nakayama categories $\Nak{s+r}{r}$ in the range $3\leq r\leq 50$, $2\leq s\leq 50$.

Under Conjecture \ref{conj:Fuchsian:perp} and Conjecture \ref{conj:257 and 266 and 2311(derived version)}, we obtain the main result of this paper as follows, see also Figure~\ref{figure:green:wall}.

\begin{theorem}\label{classification-of-Fuchsian-nakayama-cat}
The complete classification of Fuchsian Nakayama categories $\Nn=\Nak{n}{r}$, for $n\geq5$ and $r\geq3$, is as follows:
\begin{enumerate}[label=(\roman*)]
\item[(1)] $\Nn$ belongs to the following two families:
  \begin{enumerate}
  \item[-] $\Nak{r+5}{r}, r\geq 7$, having Fuchsian type $\langle 2,3,r];$
  \item[-] $\Nak{r+6}{r}, r\geq 6$, having Fuchsian type $\langle 2,3,r+1].$
  \end{enumerate}
\item[(2)] $\Nn$ is one of the hypersurface singularity categories:
 \begin{enumerate}
  \item[-] $\Nak{r}{3}$, $r=12, 13, 14$, having Fuchsian type $\langle 2,3,r-5];$
  \item[-] $\Nak{r}{4}$ or $\Nak{r}{5}$, $r=11, 12, 13$, having Fuchsian type $\langle 2, 5, r-7];$
  \item[-] $\Nak{13}{6}$, having Fuchsian type $\langle 2, 4, 7].$
 \end{enumerate}
\item[(3)] $\Nn$ is one of the complete intersection singularity categories:
 \begin{enumerate}
  \item[-] $\Nak{15}{3}$ or $\Nak{15}{8}$, having Fuchsian type $\langle 2, 3, 10];$
  \item[-] $\Nak{14}{4}$ or $\Nak{14}{6}$, having Fuchsian type $\langle 2, 5, 7];$
  \item[-] $\Nak{14}{5}$, having Fuchsian type $\langle 2, 6, 6];$
  \item[-] $\Nak{16}{8}$, having Fuchsian type $\langle 2, 3, 11].$
  \end{enumerate}
\end{enumerate}
\end{theorem}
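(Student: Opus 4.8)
The plan is to split the statement into a \emph{realization} half --- every category in the list is Fuchsian of the stated type --- and a \emph{completeness} half --- no other $\Nak{n}{r}$ is Fuchsian. The two infinite families are already available: $\Nak{r+5}{r}\simeq\fuchs{2,3,r}$ for $r\ge 7$ is Corollary~\ref{familis Nakayama corollary}(1), $\Nak{r+6}{r}\simeq\fuchs{2,3,r+1}$ for $r\ge 7$ is Proposition~\ref{N-r+6-r}, and the remaining boundary member $\Nak{12}{6}$ of the second family is the equivalence $\Nak{12}{6}\simeq\fuchs{2,3,7}$ obtained by one-point extension in the proof of Theorem~\ref{classification-of-piecewise-hereditary-nakayama-cat}.

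For the sporadic entries I would feed the explicit tilting objects of Theorem~\ref{tiltingobjects}, namely $\Nak{11}{5}\simeq\fuchs{2,4,5}$, $\Nak{12}{5}\simeq\fuchs{2,5,5}$, $\Nak{13}{5}\simeq\fuchs{2,5,6}$ and $\Nak{13}{6}\simeq\fuchs{2,4,7}$, and spread the first three across their Happel--Seidel partners $\Nak{11}{4}\simeq\Nak{11}{5}$, $\Nak{12}{4}\simeq\Nak{12}{5}$, $\Nak{13}{4}\simeq\Nak{13}{5}$ using Proposition~\ref{HS-symmetry}. The hypersurface entries $\Nak{r}{3}$ ($r=12,13,14$) and the complete-intersection entry $\fuchs{2,3,10}$ reduce entirely to the two families through the same symmetries, e.g. $\Nak{12}{3}\simeq\Nak{12}{7}$, $\Nak{14}{3}\simeq\Nak{14}{8}$ and $\Nak{15}{3}\simeq\Nak{15}{9}\simeq\Nak{15}{8}$, and thus need no new input. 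Only the three complete-intersection seeds $\nak{14}{4}\simeq\fuchs{2,5,7}$, $\nak{14}{5}\simeq\fuchs{2,6,6}$ and $\nak{16}{8}\simeq\fuchs{2,3,11}$ genuinely rely on Conjecture~\ref{conj:257 and 266 and 2311(derived version)}; the partner $\Nak{14}{6}\simeq\Nak{14}{4}$ again follows from Proposition~\ref{HS-symmetry}.

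For completeness I would promote Conjecture~\ref{conj:Fuchsian:perp}(F2), together with Lemma~\ref{lemma:perp}, to the statement that for each fixed $r$ the set of $n$ with $\Nak{n}{r}$ Fuchsian or piecewise hereditary is closed under $n\mapsto n-1$. Combined with the complete list of piecewise hereditary Nakayama categories (Theorem~\ref{classification-of-piecewise-hereditary-nakayama-cat}) and the fact that a Fuchsian singularity category is never piecewise hereditary (Proposition~\ref{prop:fuchs}), this forces the Fuchsian values of $n$ to form a single interval lying immediately above the piecewise hereditary range. A short descent then collapses completeness to one point per row: if some $\Nak{n}{r}$ beyond the realized range were Fuchsian, downward closure would make the first entry past the realized top --- the ``dark green wall'' of Figure~\ref{Figure1} --- Fuchsian as well, so it is enough to prove that this single wall category is not Fuchsian.

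The wall exclusion is where the real difficulty lies. For the rows whose wall has a Coxeter polynomial not matching any Fuchsian type of the correct Grothendieck rank (such as $\nak{17}{8}$, with $\chi=(\lambda+1)(\lambda^{16}+\lambda^{8}+1)$) this is a finite check. The essential case is the generic wall: Proposition~\ref{Coxeter numbers} gives $\chi_{(r+7,r)}=(\lambda+1)(\lambda^{6}-\lambda^{3}+1)(\lambda^{r}+1)$, which is precisely the Coxeter polynomial of the genuine Fuchsian category $\fuchs{2,3,r+2}\simeq\nak{r+7}{r+2}$, and likewise $\nak{16}{9}\simeq\nak{16}{3}$ shares its Coxeter polynomial with $\fuchs{2,3,11}$. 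Hence the wall members must be separated from honest Fuchsian categories by an invariant finer than the Coxeter polynomial. The route I would pursue is structural: $\nak{r+7}{r}$ is a one-point extension of the \emph{already} Fuchsian category $\Nak{r+6}{r}\simeq\fuchs{2,3,r+1}$, whereas $\fuchs{2,3,r+2}$ is a one-point extension of the hereditary category $\Der{\coh\X(2,3,r+2)}$ by a line bundle; the goal is to show that extending a Fuchsian singularity category can never reproduce a Fuchsian singularity category, for instance by exhibiting an Auslander--Reiten component not of type $\ZZ\AA_{\infty}$, which by the argument of Proposition~\ref{prop:fuchs} is impossible inside any $\zvect\X$. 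Establishing this uniformly in $r$, rather than checking it row by row against the experimental evidence behind Figure~\ref{Figure1}, is the principal obstacle.
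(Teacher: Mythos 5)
Your realization half coincides with the paper's: the two infinite families from Corollary~\ref{familis Nakayama corollary}(1) and Proposition~\ref{N-r+6-r} (with $\Nak{12}{6}$ supplied by the one-point extension step in the proof of Theorem~\ref{classification-of-piecewise-hereditary-nakayama-cat}), the four seeds from Theorem~\ref{tiltingobjects}, propagation by Proposition~\ref{HS-symmetry}, and Conjecture~\ref{conj:257 and 266 and 2311(derived version)} for the three remaining complete-intersection entries. That part is fine, and you correctly flag which entries are conditional. The reduction of completeness to excluding the wall via $(F2)$ also matches the paper's fencing argument.

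The gap is in the wall exclusion, and it is twofold. First, your premise that $\chi_{(r+7,r)}=(\lambda+1)(\lambda^{6}-\lambda^{3}+1)(\lambda^{r}+1)$ ``is precisely the Coxeter polynomial of the genuine Fuchsian category $\fuchs{2,3,r+2}$'' is false, and it sends you looking for a finer invariant that is not needed. The wall polynomial is a product of cyclotomic factors, so the Coxeter transformation of $\nak{r+7}{r}$ is \emph{periodic} of order $\lcm(9,2r)$; by \cite[Table 2]{Lenzing:Pena:2011} only $22$ weight triples yield a Fuchsian singularity category with periodic Coxeter transformation, and $(2,3,p)$ is generically not among them (e.g.\ $\fuchs{2,3,7}$ carries Lehmer's Salem polynomial, which is not cyclotomic). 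This periodicity dichotomy --- wall periodic versus Fuchsian non-periodic outside a finite, explicitly checkable list --- is exactly the paper's argument, so the Coxeter data alone already does the job. Second, the structural principle you propose as a substitute, that a one-point extension of a Fuchsian singularity category can never again be Fuchsian, is contradicted inside this very paper: $\Nak{r+6}{r}\simeq\fuchs{2,3,r+1}$ is obtained as the one-point extension $N_{r+5}(r)[I_{\,\cdot\,}]$ of $\Nak{r+5}{r}\simeq\fuchs{2,3,r}$, both Fuchsian. Since you also acknowledge that you cannot establish your uniform-in-$r$ claim, the completeness half of your proposal does not go through as written; replacing it by the periodicity argument (periodic wall, $22$ exceptional triples, finite check) closes the gap and is what the paper does.
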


\begin{proof}
Firstly, we show that the Nakayama categories involved in the ``red'' part of Figure~\ref{figure:green:wall} have desired Fuchsian types.
The first part has been proved in Corollary \ref{familis Nakayama corollary} (3) and Proposition \ref{N-r+6-r}.
 We now prove part (2) case by case.

 For the following three weight types, we use the (Extended) Happel-Seidel symmetry, see Proposition \ref{HS-symmetry}.
    \begin{itemize}
     \item[-] For $\X$ of weight type (2,3,7), there has a unique line bundle orbit in the category $\vect\X(2,3,7)$. Hence $\tau^{\ZZ}\co$-exact structure coincides with the $\mathcal{L}$-exact structure on $\vect\X(2,3,7)$.
         Thus $\langle 2,3,7]\simeq \langle 2,3,7\rangle\simeq \Nak{12}{3}$, see \cite{Kussin:Lenzing:Meltzer:2013adv}.
     \item[-] For $\X$ of weight type (2,3,8), we have $\langle 2,3,8]\simeq \Nak{13}{7}$ by part (1). Note that we have the Happel-Seidel symmetry $\Nak{12}{3}\simeq \langle 2,3,7\rangle\simeq \Nak{12}{7}$. It follows from Proposition \ref{HS-symmetry} (3) that $\Nak{13}{3}\simeq \Nak{13}{7}\simeq \langle 2,3,8]$.
     \item[-] For $\X$ of weight type (2,3,9), we have $\langle 2,3,9]\simeq \Nak{14}{8}$ by part (1). By Happel-Seidel symmetry $\Nak{14}{3}\simeq \langle 2,3,8\rangle\simeq \Nak{14}{8}\simeq \langle 2,3,9]$.
          \end{itemize}

The following four weight types rely on the realization of Nakayama algebras by tilting objects in the stable category $\zvect\X$, see Theorem \ref{tiltingobjects}.
            \begin{itemize}
            \item[-] For $\X$ of weight type (2,4,5), by Theorem \ref{tiltingobjects} (1) there exists a tilting object in $\langle 2,4,5]$ with endomorphism algebra $N_{11}(5)$. Thus $\langle 2,4,5]\simeq  \Nak{11}{5}$. Moreover, we have the Happel-Seidel symmetry $\Nak{12}{4}\simeq \langle 2,4,5\rangle \simeq \Nak{12}{5}$. Thus $\Nak{11}{4}\simeq \Nak{11}{5}\simeq\langle 2,4,5]$ by Proposition \ref{HS-symmetry} (2).
         \item[-] For $\X$ of weight type (2,5,5), by Theorem \ref{tiltingobjects} (2) there exists a tilting object in $\langle 2,5,5]$ with endomorphism algebra $N_{12}(5)$. Thus $\langle 2,5,5]\simeq  \Nak{12}{5}\simeq\langle 2,4,5\rangle\simeq \Nak{12}{4}$ by Happel-Seidel symmetry.
         \item[-] For $\X$ of weight type (2,5,6), by Theorem \ref{tiltingobjects} (3) there exists a tilting object in $\langle 2,5,6]$ with endomorphism algebra $N_{13}(5)$. Thus $\langle 2,5,6]\simeq  \Nak{13}{5}\simeq \Nak{13}{4}$ by Proposition \ref{HS-symmetry} (3).

        \item[-] For $\X$ of weight type (2,4,7), by Theorem \ref{tiltingobjects} (4) there exists a tilting object in $\langle 2,4,7]$ with endomorphism algebra $N_{13}(6)$. Thus $\langle 2,4,7]\simeq  \Nak{13}{6}$.
      \end{itemize}

 Now we turn to consider part (3), where Proposition \ref{HS-symmetry} plays an important role.
    \begin{itemize}
     \item[-] For $\X$ of weight type (2,3,10), we have $\langle 2,3,10]\simeq \Nak{15}{9}$ by part (1). Note that we have the Happel-Seidel symmetry $\Nak{16}{3}\simeq \langle 2,3,9\rangle\simeq \Nak{16}{9}$. It follows that $\Nak{15}{3}\simeq \Nak{15}{9}$. Moreover, by Happel-Seidel symmetry $\Nak{14}{3}\simeq \langle 2,3,8\rangle\simeq \Nak{14}{8}$, we finally get $\Nak{15}{8}\simeq \Nak{15}{3}\simeq \Nak{15}{9}\simeq \langle 2,3,10]$.
   \item[-] For $\X$ of weight type (2,5,7), under Conjecture
     \ref{conj:257 and 266 and 2311(derived version)} we have $\langle 2,5,7]\simeq  \Nak{14}{4}$.
     Note that we have the Happel-Seidel symmetry $\Nak{15}{4}\simeq\langle 2,4,6\rangle\simeq \Nak{15}{6}$. It follows that $\Nak{14}{4}\simeq \Nak{14}{6}\simeq \langle 2,5,7]$.
     \item[-] For $\X$ of weight type (2,6,6), under Conjecture
     \ref{conj:257 and 266 and 2311(derived version)} we have $\langle 2,6,6]\simeq  \Nak{14}{5}$.
     \item[-] For $\X$ of weight type (2,3,11), under Conjecture
      \ref{conj:257 and 266 and 2311(derived version)} we have $\langle 2,3,11]\simeq  \Nak{16}{8}$.
  \end{itemize}

Secondly, we show that our classification of Nakayama algebras of Fuchsian type above is complete.


In Figure~\ref{figure:green:wall}, the dark green bricks on the right, forming a green wall, contain the Coxeter numbers (see Proposition \ref{Coxeter numbers} and Section \ref{Coxeter polynomial section}). The Fuchsian Nakayama categories, found sofar and listed in Figure~\ref{figure:green:wall}, are fenced by the green wall, and the Nakayama categories belonging to the wall have periodic Coxeter transformations.

The periodicity features of the green wall imply that the members of the green wall cannot be piecewise hereditary, since periodicity only appears for Dynkin module type, which cannot appear here, and for sheaf type where the possible Coxeter numbers --- for weight triples --- are 3, 4, and 6.
On the other hand, the members of the green wall cannot be of Fuchsian type. In fact, by \cite[Table 2]{Lenzing:Pena:2011} there are only 22 weight triples yielding a Fuchsian singularity category with periodic Coxeter transformation. For these 22 triples it is easily checked that they don't belong to the green wall. Therefore, assuming the validity of the perpendicularity conjecture $(F2)$ from Conjecture~\ref{conj:Fuchsian:perp}, the fencing argument from the piecewise hereditary case extends to the Fuchsian case as well. That is, under the assumption $(F2)$, our classification of Nakayama algebras of Fuchsian type is complete.



This finishes the proof.
\end{proof}

\begin{remark} The Coxeter polynomials for extended canonical algebras (in particular, for the Nakayama algebras of Fuchsian type)
have been calculated in
\cite{Lenzing:Pena:2009}. As a consequence of Theorem \ref{classification-of-Fuchsian-nakayama-cat}, we know that Conjecture \ref{conj:Coxeter polynomial and derived equivalence} holds true between Nakayama algebras of Fuchsian type.
\end{remark}

\noindent{\bf Acknowledgment.}
This work was partially supported by the National Natural Science Foundation of China (No. 11801473), and by the Alexander von Humboldt Foundation in the framework of the Alexander von Humboldt Professorship endowed by the German Federal Ministry of Education and Research.

 \appendix
  \renewcommand{\appendixname}{Appendix~\Alph{section}}

\section{Extension-free property}\label{Appendix A}

Let $\mathcal{D}$ be a Hom-finite triangulated category with Serre functor $\bS=\tau[1]$, where $\tau$ is the Auslander-Reiten translation and $[1]$ denotes the suspension functor of $\mathcal{D}$.
The Serre duality of $\mathcal{D}$ is given by the following natural isomorphism $${\rm{Hom}}_{\mathcal{D}}(X,Y)\cong D {\rm{Hom}}_{\mathcal{D}}(Y, \bS X),$$ which are functorial in $X,Y\in\mathcal{D}$.

\begin{lemma}\label{extension-free formula for tau^k U[k]}
Let $X\in\mathcal{D}$ and $U=\bigoplus\limits_{k=0}^{m}\bS^{k}X$ for some $m\geq 0$. Then $U$ is extension-free in $\mathcal{D}$ if and only if ${\rm{Hom}}_{\mathcal{D}}(X, \bS^{k}X[n])=0$ for any $1\leq k\leq m+1$ and any non-zero integer $n$.
\end{lemma}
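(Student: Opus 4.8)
The plan is to unwind the definition of extension-freeness and reduce everything to the single object $X$ by exploiting that the Serre functor $\bS$ is a triangle autoequivalence. By definition $U$ is extension-free precisely when ${\rm{Hom}}_{\mathcal{D}}(U, U[n]) = 0$ for every $n \neq 0$, and since $U = \bigoplus_{k=0}^{m} \bS^{k} X$ this amounts to the vanishing of ${\rm{Hom}}_{\mathcal{D}}(\bS^{a} X, \bS^{b} X[n])$ for all $0 \le a, b \le m$ and all $n \neq 0$. As $\bS$ is an autoequivalence commuting with the suspension functor, applying $\bS^{-a}$ gives ${\rm{Hom}}_{\mathcal{D}}(\bS^{a} X, \bS^{b} X[n]) \cong {\rm{Hom}}_{\mathcal{D}}(X, \bS^{b-a} X[n])$. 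Thus extension-freeness of $U$ is equivalent to the vanishing of ${\rm{Hom}}_{\mathcal{D}}(X, \bS^{j} X[n])$ for every $n \neq 0$ and every $j$ in the symmetric range $-m \le j \le m$.

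First I would dispose of the nonnegative differences: for $1 \le j \le m$ these vanishing conditions already sit inside the asserted range $1 \le k \le m+1$, so nothing has to be done. The remaining work is to fold the nonpositive differences $-m \le j \le 0$ onto positive indices, and here the key tool is \emph{Serre duality}. Applying ${\rm{Hom}}_{\mathcal{D}}(X, Y) \cong D\,{\rm{Hom}}_{\mathcal{D}}(Y, \bS X)$ with $Y = \bS^{j} X[n]$ and again using that $\bS$ commutes with the shift yields
\[
{\rm{Hom}}_{\mathcal{D}}(X, \bS^{j} X[n]) \cong D\,{\rm{Hom}}_{\mathcal{D}}(X, \bS^{1-j} X[-n]).
\]
Setting $k = 1-j$, as $j$ runs through $\{-m, \ldots, 0\}$ the index $k$ runs through $\{1, \ldots, m+1\}$, while $-n$ runs through all nonzero integers exactly when $n$ does. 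Hence the vanishing for nonpositive $j$ is equivalent to the vanishing of ${\rm{Hom}}_{\mathcal{D}}(X, \bS^{k} X[n])$ for $1 \le k \le m+1$ and $n \neq 0$.

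Combining the two blocks gives both implications. For the forward direction, extension-freeness supplies vanishing for all $-m \le j \le m$; the only asserted index not directly among the positive differences is $k = m+1$, which is produced from $j = -m$ (that is, $a = m$, $b = 0$) through the Serre-duality identity above. Conversely, the asserted vanishing for $1 \le k \le m+1$ covers the positive differences directly and, via Serre duality, all nonpositive differences, hence forces $U$ to be extension-free. I expect no genuine obstacle here; the only point requiring care is the index bookkeeping, namely checking that the shift by $1$ built into Serre duality turns the nonpositive range $\{-m, \ldots, 0\}$ into exactly $\{1, \ldots, m+1\}$ rather than an off-by-one or larger range, together with noting at the outset that $\bS$ is a triangle autoequivalence so that it may legitimately be cancelled against the suspension functor.
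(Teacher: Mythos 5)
Your argument is correct and follows essentially the same route as the paper: unwind the definition to the vanishing of ${\rm{Hom}}_{\mathcal{D}}(X,\bS^{j}X[n])$ for $-m\leq j\leq m$ and $n\neq 0$, then fold the nonpositive range onto $\{1,\ldots,m+1\}$ via Serre duality ${\rm{Hom}}_{\mathcal{D}}(X,\bS^{j}X[n])\cong D\,{\rm{Hom}}_{\mathcal{D}}(X,\bS^{1-j}X[-n])$. The index bookkeeping is exactly as in the paper's proof, so there is nothing to add.
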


\begin{proof}
By definition, $U$ is extension-free if and only if for any $n\neq 0$ and $0\leq i,j \leq m$, ${\rm{Hom}}_{\mathcal{D}} (\bS^{i}X, \bS^{j}X[n])=0$, or equivalently, ${\rm{Hom}}_{\mathcal{D}} (X, \bS^{k}X[n])=0$ for any $n\neq 0$ and $-m\leq k \leq m$. For $-m\leq k\leq 0$, we have ${\rm{Hom}}_{\mathcal{D}} (X, \bS^{k}X[n])\cong D{\rm{Hom}}_{\mathcal{D}} (X, \bS^{1-k}X[-n])$ by using  Serre duality, where $1\leq 1-k\leq m+1$. This finishes the proof.
\end{proof}

\begin{lemma}\label{extension-free formula for tau^k U[k]: general case}
Let $X,Y\in\mathcal{D}$ and $U=\bigoplus\limits_{k=0}^{m}\bS^{k}X$, $V=\bigoplus\limits_{k=0}^{r}\bS^{k}Y$ for some $m,r\geq 0$. Assume both of $U$ and $V$ are extension-free. Then $U\oplus V$ is extension-free if and only if ${\rm{Hom}}_{\mathcal{D}}(X, \bS^{k}Y[n])=0$ for any $-m\leq k\leq r+1$ and any non-zero integer $n$.
\end{lemma}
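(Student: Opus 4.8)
The plan is to follow the template of Lemma~\ref{extension-free formula for tau^k U[k]}, splitting the extension-freeness of $U\oplus V$ into four pieces and then carefully tracking index ranges. First I would record that, by definition and by the standing hypothesis that $U$ and $V$ are each extension-free, the object $U\oplus V$ is extension-free in $\mathcal{D}$ if and only if the two \emph{cross} conditions
$$\mathrm{Hom}_{\mathcal{D}}(U,V[n])=0=\mathrm{Hom}_{\mathcal{D}}(V,U[n])$$
hold for every non-zero integer $n$. Expanding each as a direct sum over the summands $\bS^i X$ of $U$ and $\bS^j Y$ of $V$, the whole problem reduces to deciding when these two families of Hom-spaces vanish.

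For the first cross term I would use that $\bS$ is an autoequivalence commuting with the suspension $[1]$, so that $\mathrm{Hom}_{\mathcal{D}}(\bS^i X,\bS^j Y[n])\cong\mathrm{Hom}_{\mathcal{D}}(X,\bS^{j-i}Y[n])$. As $(i,j)$ ranges over $\{0,\dots,m\}\times\{0,\dots,r\}$, the exponent $k=j-i$ ranges exactly over $-m\leq k\leq r$; hence $\mathrm{Hom}_{\mathcal{D}}(U,V[n])=0$ for all $n\neq 0$ if and only if $\mathrm{Hom}_{\mathcal{D}}(X,\bS^{k}Y[n])=0$ for all $n\neq 0$ and $-m\leq k\leq r$. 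For the second cross term the same autoequivalence argument gives $\mathrm{Hom}_{\mathcal{D}}(\bS^j Y,\bS^i X[n])\cong\mathrm{Hom}_{\mathcal{D}}(Y,\bS^{i-j}X[n])$, with $i-j$ ranging over $-r\leq i-j\leq m$. Here I would then invoke Serre duality to fold this back into the same order as before: applying $\bS^{-(i-j)}$ and the isomorphism $\mathrm{Hom}_{\mathcal{D}}(Y,\bS^{l}X[n])\cong D\,\mathrm{Hom}_{\mathcal{D}}(X,\bS^{1-l}Y[-n])$, the vanishing of the second cross term translates into $\mathrm{Hom}_{\mathcal{D}}(X,\bS^{k}Y[n])=0$ with $k=1-l$ now ranging over $1-m\leq k\leq r+1$ and $n$ again arbitrary non-zero.

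Finally I would combine the two index ranges. Since $m,r\geq 0$ we always have $1-m\leq r+1$, so the intervals $[-m,r]$ and $[1-m,r+1]$ overlap and their union is the single contiguous interval $[-m,r+1]$. This yields precisely the asserted condition that $\mathrm{Hom}_{\mathcal{D}}(X,\bS^{k}Y[n])=0$ for all non-zero $n$ and all $-m\leq k\leq r+1$, completing the equivalence. I do not expect a genuine obstacle here; the only delicate point is the bookkeeping in the Serre-duality step, where the shift $l\mapsto 1-l$ is exactly what pushes the upper bound from $r$ up to $r+1$ and thereby accounts for the asymmetry between the lower limit $-m$ and the upper limit $r+1$ in the statement.
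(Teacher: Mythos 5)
Your proposal is correct and follows essentially the same route as the paper: reduce to the two cross conditions, shift by powers of $\bS$ to normalize one side, apply Serre duality to fold $\mathrm{Hom}(Y,\bS^{l}X[n])$ back into $\mathrm{Hom}(X,\bS^{1-l}Y[-n])$, and take the union of the integer index ranges $[-m,r]\cup[1-m,r+1]=[-m,r+1]$. The only cosmetic remark is that in the edge case $m=r=0$ the two ranges are adjacent rather than overlapping, but their union as sets of integers is still the asserted interval, so nothing is affected.
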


\begin{proof}
By definition, $U\oplus V$ is extension-free if and only if for any $n\neq 0$, $0\leq i \leq m$ and $0\leq j \leq r$, ${\rm{Hom}}_{\mathcal{D}} (\bS^{i}X, \bS^{j}Y[n])=0$ and ${\rm{Hom}}_{\mathcal{D}} (\bS^{j}Y[n], \bS^{i}X)=D{\rm{Hom}}_{\mathcal{D}} (\bS^{i}X, \bS^{j+1}Y[n])=0$.
Equivalently, ${\rm{Hom}}_{\mathcal{D}} (X, \bS^{k}Y[n])=0$ for any $n\neq 0$ and $-m\leq k \leq r+1$. This proves the result.
\end{proof}

In the following, we will show that the objects $T_{(2,4,5)}$, $T_{(2,4,7)}$, $T_{(2,5,5)}$, $T_{(2,5,6)}$ constructed in Theorem \ref{tiltingobjects} are extension-free in $\zvect\X$ case by case.

Recall that for any weight type $(p_1,p_2,p_3)$, there is a surjective group homomorphism $\delta\colon \bl(p_1,p_2,p_3)\rightarrow \mathbb{Z}$ given by $\delta(\vec{x}_i)=\frac{p}{p_i}$ for $1\leq i\leq 3$, where $p={\rm l.c.m.}(p_1,p_2,p_3)$. We denote the $\delta$-datum $(\delta(\vc); \delta(\vx_1), \delta(\vx_2), \delta(\vx_3); \delta(\vw))$ by $\delta(\vc; \vx_1, \vx_2, \vx_3; \vw)$ for convenience.

For a weighted projective line $\X$ of negative Euler characteristic, the associated Fuchsian singularity $R$ is the restricted subalgebra $S|_{\mathbb{Z}\vw}$ of the coordinate algebra of $\X$ defined in \eqref{Coordinate algebra S}, i.e., $$R=\bigoplus_{n\geq0}\Hom{}{\co_{\X}}{ \tau^n\co_{\X}}=S|_{\mathbb{Z}\vw}.$$

\subsection{The weight type (2,4,5)}\label{subsection (245)}

Assume $\X$ has weight type (2,4,5).
By easy calculation we get the $\delta$-datum $\delta(\vc; \vx_1, \vx_2, \vx_3; \vw)=(20;10,5,4;1)$.

According to \cite[Table 3]{Lenzing:Pena:2011}, the semigroup $\{n\vw | n\vw\geq 0\}$ of $\Z \vw$ has minimal generating system $\{4\vw=\vx_3; 10\vw=2\vx_2; 15\vw=\vx_1+\vx_2\}$.
Hence, the subalgebra $R=S|_{\mathbb{Z}\vw}\cong k[x,y,z]/(f)$, where $x=x_3, y=x_2^2, z=x_1x_2$ and $f=z^2+y^3+x^5y$. Moreover, $R$ is $\ZZ$-graded in the sense that $\deg (x,y,z;f)=(4,10,15;30)$.
Thus the second suspension functor of $\zvect\X(2,4,5)$ is given by degree shift: $[2]=(30\vw)=(\vc+2\vx_2)$.


Since $4\vw=\vx_3>0$, the set $\Ss$ defined in \eqref{SS} has the form $$\Ss
=\{\vx\, |\, 0\leq \vx\leq n\vw+\vc,  \text{\ for\ any \ } 2\leq n\leq 5\}
=\{0, \vx_2\}.$$ Hence each line bundle belongs to the orbit $\tau^{\Z}\co$ or $\tau^{\Z}\co(\vx_2)$ by Proposition \ref{rep for line bundle orbits}.

By Proposition \ref{projective cover}, the projective cover of $\co(\vx_2)$, under $\tau^{\ZZ}\co$-exact structure on $\vect\X$, is given by $\cp(\co(\vx_2))=\co\oplus\co(-5\vw)$, which fits into the following exact sequence:
\begin{equation}\label{proj cover for 245}\xymatrix{0\ar[r]&\co(-\vx_1)\ar[rr]^-{(x_1, -x_2)}&&\co\oplus\co(-5\vw)\ar[rr]^-{(x_2, x_1)^t}&& \co(\vx_2)\ar[r]& 0.}
\end{equation}

\begin{lemma} In $\zvect\X(2,4,5)$, we have
$\co(\vx_2)[1]=\co(\vx_2)(15\vw).$
\end{lemma}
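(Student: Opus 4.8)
The plan is to identify $\co(\vx_2)(15\vw)$ as the cosyzygy of $\co(\vx_2)$ under the $\tau^{\Z}\co$-exact structure, which is precisely what the suspension functor $[1]$ of the Frobenius stable category $\zvect\X$ computes. Recall that in $\zvect\X=\vect\X/[\tau^{\Z}\co]$ the shift $[1]$ sends an object $E$ to the cokernel of any inflation $E\to P$ with $P$ projective--injective, that is, $P\in\mathrm{add}(\tau^{\Z}\co)$. Hence it suffices to produce a single $\tau^{\Z}\co$-conflation $0\to\co(\vx_2)\to P\to\co(\vx_2)(15\vw)\to 0$ whose middle term $P$ is a sum of line bundles from the orbit $\tau^{\Z}\co$.

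First I would record the identity $\vx_1+\vx_2=15\vw$ in $\LL(2,4,5)$, which is part of the minimal generating system of the semigroup $\{n\vw\mid n\vw\geq 0\}$ listed above. Next I would take the projective cover sequence \eqref{proj cover for 245} of $\co(\vx_2)$ and apply to it the degree-shift autoequivalence $(\vx_1+\vx_2)=(15\vw)$ of $\coh\X$. This functor restricts to $\vect\X$ and permutes the orbit $\tau^{\Z}\co$, sending $\co(n\vw)$ to $\co((n+15)\vw)$; therefore it preserves $\mathrm{add}(\tau^{\Z}\co)$ and carries $\tau^{\Z}\co$-exact sequences to $\tau^{\Z}\co$-exact sequences, since the defining test functors $\Hom{}{\tau^{n}\co}{-}$ are merely permuted among themselves.

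Applying this shift to \eqref{proj cover for 245} turns the left-hand term $\co(-\vx_1)$ into $\co(-\vx_1)(15\vw)=\co(\vx_2)$, the projective--injective middle term $\co\oplus\co(-5\vw)$ into $\co(15\vw)\oplus\co(10\vw)$, and the right-hand term $\co(\vx_2)$ into $\co(\vx_2)(15\vw)$, yielding the $\tau^{\Z}\co$-conflation
$$0\to\co(\vx_2)\to\co(15\vw)\oplus\co(10\vw)\to\co(\vx_2)(15\vw)\to 0$$
whose middle term lies in $\mathrm{add}(\tau^{\Z}\co)$. Since $\co(\vx_2)$ belongs to the orbit $\tau^{\Z}\co(\vx_2)\neq\tau^{\Z}\co$ (recall $\Ss=\{0,\vx_2\}$), the cokernel $\co(\vx_2)(15\vw)$ is not projective--injective, so this sequence genuinely computes the suspension, giving $\co(\vx_2)[1]=\co(\vx_2)(15\vw)$ in $\zvect\X$.

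I do not expect a genuine obstacle here: the essential idea is only the twist by $15\vw=\vx_1+\vx_2$ that converts the projective cover of $\co(\vx_2)$ into its injective hull, and the one point requiring a line of justification is that this twist preserves the $\tau^{\Z}\co$-exact structure. As a consistency check, iterating $[1]$ produces the degree shift $(30\vw)$, in agreement with the identity $[2]=(30\vw)$ recorded above; moreover, because degree shifts commute with $[1]$ and $\LL$ acts transitively on line bundles, this single computation already forces $[1]=(15\vw)$ on \emph{every} line bundle, which is what underlies the formula $\tau L[1]=L(16\vw)$ used in the subsequent remark.
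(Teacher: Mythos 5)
Your argument is correct and rests on the same two inputs as the paper's proof: the conflation \eqref{proj cover for 245} and the identity $15\vw=\vx_1+\vx_2$. The paper reads off $\co(\vx_2)[-1]=\co(-\vx_1)=\co(\vx_2)(-15\vw)$ directly from that sequence and then applies $[2]=(30\vw)$; you instead twist the whole sequence by $(15\vw)$ so that it becomes an inflation of $\co(\vx_2)$ into $\co(15\vw)\oplus\co(10\vw)\in\mathrm{add}(\tau^{\Z}\co)$ and compute $[1]$ directly. The modest gain of your route is that it does not invoke the matrix-factorization fact $[2]=(30\vw)$ (you use it only as a consistency check), at the price of having to justify that the degree shift $(15\vw)$ preserves the $\tau^{\Z}\co$-exact structure --- which you do correctly, since it merely permutes the test objects $\tau^{n}\co$. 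One cosmetic remark: your final observation that $\co(\vx_2)(15\vw)$ is not projective--injective is unnecessary, as the suspension in the stable category of a Frobenius category is the cokernel of any inflation into a projective--injective object, regardless of the nature of that cokernel.
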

\begin{proof}
By \eqref{proj cover for 245} we have $\co(\vx_2)[-1]=\co(-\vx_1)=\co(\vx_2)(-15\vw)$. Then the result follows from $[2]=(30\vw)$.
\end{proof}

Moreover, by Proposition \ref{non-zero morphisms between line bundles}, we have

\begin{lemma} In $\zvect\X(2,4,5)$, we have
\begin{equation}\label{non-zero morphisms(2,4,5)}
  \sHom(\co(\vx_2-\vx), \co(\vx_2))\neq 0 \text{\quad if\ and\ only\ if\quad } 0\leq \vx\leq 4\vx_3.
\end{equation}
\end{lemma}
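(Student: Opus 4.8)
The plan is to read the lemma directly off Proposition~\ref{non-zero morphisms between line bundles}(1), specialized to $i=2$. This is legitimate because $\vx_2\in\Ss=\{0,\vx_2\}$ for the weight type $(2,4,5)$, as computed above, so the hypothesis of part~(1) of that proposition is met. For $\vx=\sum_i l_i\vx_i+l\vc\geq 0$ written in normal form, the proposition asserts that $\sHom(\co(\vx_2-\vx),\co(\vx_2))\neq 0$ if and only if $l_2=0$ and $\vx\ngeq\vx_2+\vw+p_2\vw$, and that the space is then one-dimensional. Thus the whole task reduces to identifying the bounding element $\vx_2+\vw+p_2\vw$ and translating the two resulting conditions into the interval $0\leq\vx\leq 4\vx_3$.

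First I would simplify the bound. With $p_2=4$, the relation $4\vw=\vx_3$ recorded for this weight type, and $\vw=\vc-\vx_1-\vx_2-\vx_3$, one computes
\[
\vx_2+\vw+p_2\vw=\vx_2+\vw+\vx_3=\vc-\vx_1=\vx_1,
\]
the last equality using $\vc=2\vx_1$. Hence the condition of the proposition becomes simply $l_2=0$ together with $\vx\ngeq\vx_1$.

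Next comes the normal-form bookkeeping in $\bl$, for which I would use the standard fact that a normally written element is $\geq 0$ exactly when its $\vc$-coefficient is non-negative. Fix $\vx\geq 0$ with $l_2=0$ and reduced ranges $0\leq l_1\leq 1$, $0\leq l_3\leq 4$, $l\geq 0$, and determine when $\vx-\vx_1\geq 0$: if $l_1=1$ then $\vx-\vx_1=l_3\vx_3+l\vc\geq 0$; if $l_1=0$ and $l\geq 1$ then, rewriting $-\vx_1=\vx_1-\vc$, one finds $\vx-\vx_1=\vx_1+l_3\vx_3+(l-1)\vc\geq 0$. In either case $\vx\geq\vx_1$, so these are excluded by $\vx\ngeq\vx_1$. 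The only surviving case is $l_1=0$ and $l=0$, forcing $\vx=l_3\vx_3$ with $0\leq l_3\leq 4$, i.e. exactly $0\leq\vx\leq 4\vx_3$. Conversely, each $\vx=l_3\vx_3$ with $0\leq l_3\leq 4$ has $l_2=0$, and since $l_3\vx_3-\vx_1=\vx_1+l_3\vx_3-\vc$ has negative $\vc$-coefficient it satisfies $\vx\ngeq\vx_1$; this gives the stated equivalence.

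The argument is a direct application of Proposition~\ref{non-zero morphisms between line bundles}, so there is no genuine obstacle; the only care needed is in the combinatorics of the rank-one abelian group $\bl$, where every comparison must be carried out after passing to normal form. Keeping this bookkeeping straight is precisely what converts the abstract inequality $\vx\ngeq\vx_1$ into the explicit bound $\vx\leq 4\vx_3$.
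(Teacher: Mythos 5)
Your proposal is correct and follows exactly the route the paper intends: the paper derives this lemma as an immediate specialization of Proposition \ref{non-zero morphisms between line bundles}(1) with $i=2$, giving no further detail. Your identification of the bound $\vx_2+\vw+4\vw=\vx_1$ and the normal-form bookkeeping translating $l_2=0$ together with $\vx\ngeq\vx_1$ into $0\leq\vx\leq 4\vx_3$ are both accurate and simply make explicit what the paper leaves implicit.
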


\begin{proposition}\label{extension-free 245}
$T_{(2,4,5)}$ is extension-free in $\zvect\X(2,4,5)$.
\end{proposition}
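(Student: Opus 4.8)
The plan is to verify directly the defining condition: $T_{(2,4,5)}$ is extension-free precisely when $\sHom(L(a\vx_3),L(b\vx_3)[n])=0$ for all $n\neq 0$ and all $0\le a,b\le 10$. The first thing I would record is that suspension acts as a \emph{uniform} degree shift on every summand. Indeed, since $\vx_3=4\vw$, each summand $L(a\vx_3)=\tau^{4a}L$ is obtained from $L=\co(\vx_2)$ by the auto-equivalence $\tau^{4a}$ of $\zvect\X$: twisting by $\ZZ\vw$ permutes the orbit $\tau^{\ZZ}\co$ of projective-injectives, hence preserves the $\tau^{\ZZ}\co$-exact structure and descends to a triangle auto-equivalence, which therefore commutes with $[1]$. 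Combined with the identity $\co(\vx_2)[1]=\co(\vx_2)(15\vw)$, this gives $L(a\vx_3)[n]=L(a\vx_3)(15n\vw)$ for every $n$ and every $a$, so that no cosyzygy computation beyond the single sequence \eqref{proj cover for 245} is needed.

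Next I would reduce each Hom to the normal form handled by \eqref{non-zero morphisms(2,4,5)}. Twisting by $-(b\vx_3+15n\vw)\in\ZZ\vw$ yields
\[
\sHom(L(a\vx_3),L(b\vx_3)[n])\cong\sHom(\co(\vx_2-M\vw),\co(\vx_2)),\qquad M:=4(b-a)+15n,
\]
where $\vx=(b-a)\vx_3+15n\vw=M\vw$. By \eqref{non-zero morphisms(2,4,5)} (using $4\vx_3=16\vw$) this space is nonzero exactly when $0\le M\vw\le 16\vw$. Since positivity on $\ZZ\vw$ is governed by the Fuchsian semigroup $\{m\mid m\vw\ge 0\}=\langle 4,10,15\rangle$ recorded in \S\ref{subsection (245)}, the requirement that both $M\vw\ge 0$ and $(16-M)\vw\ge 0$ forces $M$ and $16-M$ to lie in this semigroup; a short check shows this happens only for $M\in\{0,4,8,12,16\}$, i.e.\ for $M\vw\in\{0,\vx_3,2\vx_3,3\vx_3,4\vx_3\}$.

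It then remains to rule out $M\in\{0,4,8,12,16\}$ whenever $n\neq 0$, and here a clean parity-and-size count finishes the argument. Modulo $4$ one has $M\equiv 15n\equiv 3n$, so $M\in\{0,4,8,12,16\}$ already forces $n\equiv 0\pmod 4$; as $n\neq 0$ this gives $|n|\ge 4$ and hence $|15n|\ge 60$. But $0\le a,b\le 10$ bounds $|4(b-a)|\le 40<60$, so $M>16$ when $n\ge 4$ and $M<0$ when $n\le -4$; in either case $M\notin\{0,4,8,12,16\}$. Thus every relevant Hom vanishes and $T_{(2,4,5)}$ is extension-free.

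The argument is short, and the only genuinely structural input is the uniform description $L(a\vx_3)[n]=L(a\vx_3)(15n\vw)$; I expect the main (still routine) obstacle to be pinning down the positivity interval $0\le M\vw\le 16\vw$ correctly from the semigroup $\langle 4,10,15\rangle$, after which the congruence-plus-magnitude bound disposes of all $n\neq 0$ simultaneously. If one prefers to stay within the appendix's framework, the same computation repackages through the $\bS$-orbit decomposition $T_{(2,4,5)}=\big(\bigoplus_{k=0}^{2}\bigoplus_{a=0}^{2}\bS^{k}(L(a\vx_3))\big)\oplus\big(\bigoplus_{k=0}^{1}\bS^{k}(L(3\vx_3))\big)$, applying Lemma \ref{extension-free formula for tau^k U[k]} to each orbit and Lemma \ref{extension-free formula for tau^k U[k]: general case} to each pair; the resulting vanishing conditions are special cases of the single statement established above.
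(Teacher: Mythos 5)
Your proposal is correct and follows essentially the same route as the paper's own proof: reduce via $L[1]=L(15\vw)$ and the twist-invariance of the $\tau^{\ZZ}\co$-structure to the single Hom-vanishing criterion \eqref{non-zero morphisms(2,4,5)}, observe that the only elements of $\ZZ\vw$ in the interval $[0,4\vx_3]$ are $0,\vx_3,2\vx_3,3\vx_3,4\vx_3$ (which the paper asserts directly and you justify via the semigroup $\langle 4,10,15\rangle$), and then kill all $n\neq 0$ by the divisibility-by-$4$ congruence together with the bound $|4(b-a)|\le 40<|15n|$. The only difference is presentational: you make explicit the semigroup computation and the commutation of degree shift with suspension, both of which the paper uses implicitly.
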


\begin{proof}
        Recall that $T_{(2,4,5)}=\bigoplus\limits_{k=0}^{10}L(k\vx_3)$, where $L=\co(\vx_2)$. For construction we assume $T_{(2,4,5)}$ is not extension-free, then there exist some integers $0\leq a, b \leq 10$ and $0\neq m\in \Z$, such that $$\sHom(L(a\vx_{3}), L(b\vx_{3})[m])\neq 0.$$
        Note that $\vx_3=4\vw$ and $L[1]=(L[-1])[2]=L(15\vw)$. According to
        (\ref{non-zero morphisms(2,4,5)}), we have
        \begin{equation}\label{equation5.1} (b-a)\vx_{3}+15m\vw=k\vx_{3} \text{\quad for\  some\ } 0\leq k\leq 4.
        \end{equation}
        It follows that $4(b-a)+15m=4k$ since $\vx_3=4\vw$. Hence $4|m$, say, $m=4r$ for some integer $r\neq0$. Then we have $0\leq k=b-a+15r\leq 4$, a contradiction to the assumption $0\leq a, b \leq 10$. We are done.
\end{proof}

\subsection{The weight type (2,4,7)}\label{subsection (247)}

Assume $\X$ has weight type (2,4,7).
Then the $\delta$-datum is given by $\delta(\vc; \vx_1, \vx_2, \vx_3; \vw)=(28;14,7,4;3)$.

According to \cite[Table 3]{Lenzing:Pena:2011}, the semigroup $\{n\vw | n\vw\geq 0\}$ of $\Z \vw$ has minimal generating system $\{4\vw=3\vx_3; 6\vw=2\vx_2+\vx_3; 7\vw=\vx_1+\vx_2\}$.
Hence, the subalgebra $R=S|_{\mathbb{Z}\vw}\cong k[x,y,z]/(f)$, where $x=x_3^4, y=x_2^2x^3, z=x_1x_2$ and $f=y^3+x^3y+xz^2$. Moreover, $R$ is $\ZZ$-graded in the sense that $\deg (x,y,z;f)=(4,6,7;18)$.
Thus the second suspension functor of $\zvect\X(2,4,7)$ is given by degree shift: $[2]=(18\vw)=(\vc+2\vx_2+3\vx_3)$.


Since $4\vw=3\vx_3>0$, the set $\Ss$ defined in \eqref{SS} has the form $$\Ss
=\{\vx\, |\, 0\leq \vx\leq n\vw+\vc,  \text{\ for\ any \ } 2\leq n\leq 5\}
=\{\vx\,|\, 0\leq \vx\leq \vx_2+2\vx_3\}.$$
Hence each line bundle belongs to the orbit $\tau^{\Z}\co(\vx)$ for some $0\leq \vx\leq \vx_2+2\vx_3$.


By Proposition \ref{projective cover}, the projective covers of $\co(\vx_2)$ and $\co(\vx_2+\vx_3)$, under $\tau^{\ZZ}\co$-exact structure on $\vect\X$, are given by $\cp(\co(\vx_2))=\co\oplus\co(-5\vw)$ and $\cp(\co(\vx_2+\vx_3))=\co\oplus\co(-\vw)$, which fit into the following exact sequences:
\begin{equation}\label{proj cover 1 for 247}\xymatrix{0\ar[r]&\co(-\vx_1-2\vx_3)\ar[rr]^-{(x_1x_3^2, -x_2)}&&\co\oplus\co(-5\vw)\ar[rr]^-{(x_2, x_1x_3^2)^t}&& \co(\vx_2)\ar[r]& 0;}\end{equation}
\begin{equation}\label{proj cover 2 for 247}\xymatrix{0\ar[r]&\co(-\vx_1)\ar[rr]^-{(x_1, -x_2x_3)}&&\co\oplus\co(-\vw)\ar[rr]^-{(x_2x_3, x_1)^t}&& \co(\vx_2+\vx_3)\ar[r]& 0.}
\end{equation}

\begin{lemma} The following statements hold in $\zvect\X(2,4,7)$.
\begin{itemize}
  \item[(1)] $\co(\vx_2)[1]=\co(\vx_2+\vx_3)(7\vw);$
  \item[(2)] $\co(\vx_2+\vx_3)[1]=\co(\vx_2)(11\vw).$
\end{itemize}
\end{lemma}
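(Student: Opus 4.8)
The plan is to read the $[-1]$-shifts (syzygies) straight off the two projective cover sequences and then apply the already established identity $[2]=(18\vw)$ to pass from $[-1]$ to $[1]$. The key structural fact is that, under the $\tau^{\ZZ}\co$-exact structure, the indecomposable projective-injectives of the Frobenius category $\vect\X$ are exactly the line bundles $\co(n\vw)$ in the orbit $\tau^{\ZZ}\co$. In both sequences \eqref{proj cover 1 for 247} and \eqref{proj cover 2 for 247} the middle term is a direct sum of such line bundles, so each sequence is a projective cover in the Frobenius sense and therefore computes a syzygy in $\zvect\X$.

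Concretely, first I would record from \eqref{proj cover 1 for 247} that $\co(\vx_2)[-1]=\co(-\vx_1-2\vx_3)$, and from \eqref{proj cover 2 for 247} that $\co(\vx_2+\vx_3)[-1]=\co(-\vx_1)$. Then, writing $[1]=[-1]\circ[2]$ and using that $[2]$ is the degree shift by $18\vw$, I apply $(18\vw)$ to both relations to obtain
$$\co(\vx_2)[1]=\co(-\vx_1-2\vx_3+18\vw),\qquad \co(\vx_2+\vx_3)[1]=\co(-\vx_1+18\vw).$$
Here one should be careful about the direction of the suspension functor (cosyzygy versus syzygy) and that both middle terms genuinely lie in the orbit $\tau^{\ZZ}\co$; these are the only conceptual points.

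Finally, the two assertions reduce to identities in the Picard group $\LL(2,4,7)$, which I verify using $\vw=\vc-\vx_1-\vx_2-\vx_3$ together with the relations $2\vx_1=4\vx_2=7\vx_3=\vc$. For part (1) this is $-\vx_1-2\vx_3+18\vw=\vx_2+\vx_3+7\vw$, equivalently $11\vc-12\vx_1-12\vx_2-14\vx_3=0$; for part (2) it is $-\vx_1+18\vw=\vx_2+11\vw$, equivalently $7\vc-8\vx_1-8\vx_2-7\vx_3=0$. In each case substituting $12\vx_1=6\vc$, $12\vx_2=3\vc$, $14\vx_3=2\vc$ (respectively $8\vx_1=4\vc$, $8\vx_2=2\vc$, $7\vx_3=\vc$) shows the combination vanishes. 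There is no genuine obstacle beyond this bookkeeping in $\LL$; the substance of the argument is entirely contained in reading the syzygies off \eqref{proj cover 1 for 247} and \eqref{proj cover 2 for 247} and invoking $[2]=(18\vw)$.
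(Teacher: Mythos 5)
Your proof is correct and follows essentially the same route as the paper: read off $\co(\vx_2)[-1]=\co(-\vx_1-2\vx_3)$ and $\co(\vx_2+\vx_3)[-1]=\co(-\vx_1)$ from the projective cover sequences \eqref{proj cover 1 for 247} and \eqref{proj cover 2 for 247}, then compose with $[2]=(18\vw)$; the paper merely phrases the final bookkeeping as the identities $\co(-\vx_1-2\vx_3)=\co(\vx_2+\vx_3)(-11\vw)$ and $\co(-\vx_1)=\co(\vx_2)(-7\vw)$ in $\LL(2,4,7)$, which is exactly the computation you carry out explicitly.
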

\begin{proof}
Since we have exact sequences \eqref{proj cover 1 for 247} and \eqref{proj cover 2 for 247} for projective covers, the following hold in $\zvect\X(2,4,7)$:
\begin{itemize}
  \item[-] $\co(\vx_2)[-1]=\co(-\vx_1-2\vx_3)=\co(\vx_2+\vx_3)(-11\vw);$
  \item[-] $\co(\vx_2+\vx_3)[-1]=\co(-\vx_1)=\co(\vx_2)(-7\vw)$.
\end{itemize}
Then the result follows by noting $[2]=(18\vw)$ in $\zvect\X(2,4,7)$.
\end{proof}

Moreover, by Proposition \ref{non-zero morphisms between line bundles}, we have
\begin{lemma}\label{non-zero morphisms (2,4,7)}
For any $\vx=\sum_{1\leq i\leq 3}l_i\vx_i+l\vc$ of normal form, the following hold in $\zvect\X(2,4,7)$.
\begin{itemize}
  \item[(1)] $\sHom(\co(\vx_2-\vx), \co(\vx_2))\neq 0$ if and only if $0\leq \vx\leq \vc+\vx_3$  and   $l_2=0$;
  \item[(2)]  $\sHom(\co(\vx_2+\vx_3-\vx), \co(\vx_2+\vx_3))\neq 0$ if and only if $0\leq \vx\leq \vc$ and $l_1=0$.
      \end{itemize}
\end{lemma}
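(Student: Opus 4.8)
The plan is to read off both statements as specializations of Proposition~\ref{non-zero morphisms between line bundles} to the weight triple $(p_1,p_2,p_3)=(2,4,7)$; the only genuine work is to rewrite the intrinsic conditions that proposition produces into the explicit bounds asserted here, using $\vw=\vc-\vx_1-\vx_2-\vx_3$ together with the relations $2\vx_1=4\vx_2=7\vx_3=\vc$ (equivalently $4\vw=3\vx_3$).

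First I would dispose of the trivial range: if $\vx\not\geq 0$, then already $\Hom{}{\co(\vx_2-\vx)}{\co(\vx_2)}=0=\Hom{}{\co(\vx_2+\vx_3-\vx)}{\co(\vx_2+\vx_3)}$, so both stable Hom spaces vanish, in agreement with the lower bound $0\leq\vx$. Thus one may assume $\vx\geq 0$ and apply the proposition. Its hypotheses hold here because $\Ss=\{\vx\mid 0\leq\vx\leq\vx_2+2\vx_3\}$ for this weight type (computed via Proposition~\ref{rep for line bundle orbits}), so both $\vx_2$ and $\vx_2+\vx_3$ lie in $\Ss$; hence part~(1) applies with $i=2$ and part~(2) applies with $\{i,j,k\}=\{2,3,1\}$.

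For statement~(1), part~(1) of the proposition yields $\sHom(\co(\vx_2-\vx),\co(\vx_2))\neq 0$ iff $l_2=0$ and $\vx\ngeq\vx_2+\vw+p_2\vw=\vx_2+5\vw$. Reducing $\vx_2+5\vw$ to normal form gives $\vx_2+5\vw=\vx_1+2\vx_3$, so it remains to show that, for $\vx\geq 0$ with $l_2=0$, the inequality $\vx\ngeq\vx_1+2\vx_3$ is equivalent to $\vx\leq\vc+\vx_3$. For statement~(2), part~(2) gives $\sHom(\co(\vx_2+\vx_3-\vx),\co(\vx_2+\vx_3))\neq 0$ iff $l_k=l_1\leq p_1-2=0$, i.e.\ $l_1=0$, and $\vx\ngeq\vx_2+\vx_3$; here I must show that, for $\vx\geq 0$ with $l_1=0$, the condition $\vx\ngeq\vx_2+\vx_3$ is equivalent to $\vx\leq\vc$.

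Each of these two equivalences is the heart of the matter, and it is purely combinatorial poset bookkeeping in $\bl(2,4,7)$. I would verify them by writing the differences $\vc+\vx_3-\vx$ (resp.\ $\vc-\vx$) and $\vx-(\vx_1+2\vx_3)$ (resp.\ $\vx-(\vx_2+\vx_3)$) in normal form, reducing coefficients modulo $2\vx_1=4\vx_2=7\vx_3=\vc$, and reading off the sign of the $\vc$-coefficient in each case; since the surviving coefficients of $\vx$ are bounded by $p_i-1$, this splits into finitely many elementary cases according to the sizes of $l_1,l_3$ (resp.\ $l_2,l_3$), and a direct comparison shows the two descriptions coincide. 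I expect this normal-form case analysis to be the only real obstacle, and no structural input beyond Proposition~\ref{non-zero morphisms between line bundles} is required.
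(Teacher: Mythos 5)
Your proposal is correct and follows exactly the route the paper intends: the paper offers no written proof beyond citing Proposition~\ref{non-zero morphisms between line bundles}, and your specialization to $(2,4,7)$ (with $i=2$ for part (1) and $\{i,j,k\}=\{2,3,1\}$ for part (2), so $l_k\leq p_k-2$ becomes $l_1=0$) together with the normal-form identity $\vx_2+\vw+p_2\vw=\vx_1+2\vx_3$ is the whole content. The remaining bookkeeping you defer does check out: for $\vx\geq0$ with $l_2=0$ one has $\vx\ngeq\vx_1+2\vx_3$ iff $\vx\leq\vc+\vx_3$, and for $l_1=0$ one has $\vx\ngeq\vx_2+\vx_3$ iff $\vx\leq\vc$, by comparing the $\vc$-coefficients as you describe.
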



\begin{proposition}\label{extension-free 247} $T_{(2,4,7)}$ is extension-free in $\zvect\X(2,4,7)$.
\end{proposition}

\begin{proof}
        Recall that $T_{(2,4,7)}=\big(\bigoplus\limits_{k=0}^{6}L(3k\vx_3)\big)\oplus\big
        (\bigoplus\limits_{k=0}^{5}L((3k+1)\vx_3)\big)$, where $L=\co(\vx_2)$. For contradiction we assume $T_{(2,4,7)}$ is not extension-free, then there exist some integers $0\leq a,b\leq 18$ satisfying $a,b\neq 3k+2$ for $0\leq k\leq 5$, and $0\neq m\in \Z$, such that $$\sHom(L(a\vx_{3}), L(b\vx_{3})[m])\neq 0.$$
        Recall that $[2]=(18\vw)$ in $\zvect\X(2,4,7)$. We consider the following two cases.

      \emph{Case 1:} $m$ is even, say, $m=2n$. We get
        $$\sHom(L(a\vx_{3}), L(b\vx_{3})[m])=\sHom(L(a\vx_{3}-18n\vw), L(b\vx_{3})).$$
        For $b=3k, 0\leq k\leq 6$, then according to Lemma \ref{non-zero morphisms (2,4,7)} (1), we have $0\leq (b-a)\vx_{3}+18n\vw \leq \vc+\vx_{3}$ and the coefficient of $\vx_2$ in the normal form of $(b-a)\vx_{3}+18n\vw$ is zero. Hence $4|18n$, i.e, $n$ is even, say, $n=2n'$. Then $(b-a)\vx_{3}+18n\vw=(b-a+27n')\vx_3$. Thus we get $0\leq b-a+27n'\leq 8$. Since $0\leq a, b\leq 18$, we get $n'=0$. It follows that $n=0$ and then $m=0$, a contradiction.

        For $b=3k+1, 0\leq k\leq 5$, then according to Lemma \ref{non-zero morphisms (2,4,7)} (2), we have $0\leq (b-a)\vx_{3}+18n\vw \leq \vc$. If $n$ is even, say, $n=2n'$, then $0\leq (b-a)\vx_{3}+27n'\vx_3 \leq \vc$, hence $0\leq b-a+27n'\leq 7$. Since $0\leq a, b\leq 18$, we get $n'=0$. It follows that $n=0$ and then $m=0$, a contradiction.
        If $n$ is odd, say, $n=2n'-1$, then $0\leq (b-a)\vx_{3}+18n\vw=(b-a+27n'-17)\vx_3+2\vx_2\leq\vc$. It follows that $b-a+27n'-17=0$. Hence $a\equiv 2(\rm{mod}\ 3)$, a contradiction.

        \emph{Case 2:} $m$ is odd, say, $m=2n+1$. By Serre duality we get
        $$\sHom(L(a\vx_{3}), L(b\vx_{3})[2n+1])=D\sHom(L(b\vx_{3}+18n\vw-\vw), L(a\vx_{3})).$$
        According to Lemma \ref{non-zero morphisms (2,4,7)}, by similar arguments as above we obtain that in the normal form of $(a-b)\vx_3-18n\vw+\vw$, the coefficient of $\vx_1$ or $\vx_2$ is zero. It follows that $2|18n-1$, a contradiction.

  This finishes the proof.
\end{proof}

\subsection{The weight type (2,5,5)}

 Assume $\X$ has weight type (2,5,5). Then the $\delta$-datum is given by $\delta(\vc; \vx_1, \vx_2, \vx_3; \vw)=(10;5,2,2;1)$.

According to \cite[Table 3]{Lenzing:Pena:2011}, the semigroup $\{n\vw | n\vw\geq 0\}$ of $\Z \vw$ has minimal generating system $\{4\vw=\vx_2+\vx_3; 5\vw=\vx_1\}$, and the subalgebra $R=S|_{\mathbb{Z}\vw}\cong k[x,y,z]/(f)$, where $x=x_2x_3, y=x_1, z=x_2^5$ and $f=z^2+y^2z+x^5$. Moreover, $R$ is $\ZZ$-graded in the sense that $\deg (x,y,z;f)=(4,5,10;20)$.
Thus the second suspension functor of $\zvect\X(2,5,5)$ is given by degree shift: $[2]=(20\vw)=(2\vc)$.


Since $4\vw=\vx_2+\vx_3>0$, the set $\Ss$ defined in \eqref{SS} has the form $$\Ss
=\{\vx\, |\, 0\leq \vx\leq n\vw+\vc,  \text{\ for\ any \ } 2\leq n\leq 5\}
=\{0, \vx_2, \vx_3, 2\vx_2, 2\vx_3\}.$$
Hence each line bundle belongs to the orbit $\tau^{\Z}\co(j\vx_i)$ for some $0\leq j\leq 2\leq i\leq 3$.



By Proposition \ref{projective cover}, the projective covers of $\co(\vx_3)$ and $\co(2\vx_3)$, under $\tau^{\ZZ}\co$-exact structure on $\vect\X$, are given by $\cp(\co(\vx_3))=\co\oplus\co(-6\vw)$ and $\cp(\co(2\vx_3))=\co\oplus\co(-2\vw)$, which fit into the following exact sequences:
\begin{equation}\label{proj cover 1 for 255}\xymatrix{0\ar[r]&\co(-4\vx_2)\ar[rr]^-{(x_2^4, -x_3)}&&\co\oplus\co(-6\vw)\ar[rr]^-{(x_3, x_2^4)^t}&& \co(\vx_3)\ar[r]& 0;}
\end{equation}
\begin{equation}\label{proj cover 2 for 255}\xymatrix{0\ar[r]&\co(-3\vx_2)\ar[rr]^-{(x_2^3, -x_3^2)}&&\co\oplus\co(-2\vw)\ar[rr]^-{(x_3^2, x_2^3)^t}&& \co(2\vx_3)\ar[r]& 0.}\end{equation}

\begin{lemma}\label{shift for 255} The following statements hold in $\zvect\X(2,5,5)$.
\begin{itemize}
  \item[(1)] $\co(\vx_3)[1]=\co(\vx_2)(10\vw);$
  \item[(2)] $\co(2\vx_3)[1]=\co(2\vx_2)(10\vw).$
\end{itemize}
\end{lemma}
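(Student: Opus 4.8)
The plan is to follow the same two-step recipe already used for the weight types $(2,4,5)$ and $(2,4,7)$: first read the syzygies $\co(\vx_3)[-1]$ and $\co(2\vx_3)[-1]$ directly off the projective-cover sequences, and then convert them into the desired cosyzygies $[1]$ by means of the degree-shift description of $[2]$. Recall that in the Frobenius category $\vect\X$ equipped with the $\tau^\ZZ\co$-exact structure the indecomposable projective-injectives are exactly the line bundles in the orbit $\tau^\ZZ\co$; hence a projective-cover sequence $0\to E[-1]\to\cp(E)\to E\to 0$ realizes the syzygy $E[-1]$ in the stable category $\zvect\X(2,5,5)$.

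Applying this to \eqref{proj cover 1 for 255} and \eqref{proj cover 2 for 255}, whose middle terms $\co\oplus\co(-6\vw)$ and $\co\oplus\co(-2\vw)$ are projective-injective, I would read off
$$\co(\vx_3)[-1]=\co(-4\vx_2),\qquad \co(2\vx_3)[-1]=\co(-3\vx_2).$$
Next I would invoke the already-established identity $[2]=(20\vw)$ in $\zvect\X(2,5,5)$, which gives $E[1]=E[-1](20\vw)$ for every $E$, so that
$$\co(\vx_3)[1]=\co(-4\vx_2+20\vw),\qquad \co(2\vx_3)[1]=\co(-3\vx_2+20\vw).$$

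It then remains to simplify these twists inside the Picard group $\bl(2,5,5)$. The one arithmetic fact needed is $10\vw=\vc=5\vx_2$, which follows from $\vw=\vc-\vx_1-\vx_2-\vx_3$ together with the defining relations $2\vx_1=5\vx_2=5\vx_3=\vc$ via $10\vw=10\vc-5\vc-2\vc-2\vc=\vc$. Hence $20\vw=2\vc=10\vx_2$, whence $-4\vx_2+20\vw=6\vx_2=\vx_2+10\vw$ and $-3\vx_2+20\vw=7\vx_2=2\vx_2+10\vw$, giving the two asserted identities $\co(\vx_3)[1]=\co(\vx_2)(10\vw)$ and $\co(2\vx_3)[1]=\co(2\vx_2)(10\vw)$.

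I do not expect any genuine obstacle here: the substantive content has already been absorbed into the projective-cover computations and into the identification $[2]=(20\vw)$, so what is left is pure bookkeeping in $\bl(2,5,5)$. The only step deserving a moment's attention is the reduction $10\vw=\vc$, which is exactly what rewrites the shift $(20\vw)$ as a shift by $10\vx_2$ and thereby converts the negative twists $-4\vx_2$, $-3\vx_2$ coming from the syzygies into the positive twists $\vx_2$, $2\vx_2$ appearing in the statement.
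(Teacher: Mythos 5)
Your proof is correct and follows essentially the same route as the paper: read the syzygies $\co(\vx_3)[-1]=\co(-4\vx_2)$ and $\co(2\vx_3)[-1]=\co(-3\vx_2)$ off the projective-cover sequences \eqref{proj cover 1 for 255} and \eqref{proj cover 2 for 255}, then apply $[2]=(20\vw)$. The only cosmetic difference is that the paper first rewrites $\co(-4\vx_2)=\co(\vx_2)(-10\vw)$ and $\co(-3\vx_2)=\co(2\vx_2)(-10\vw)$ before shifting by $20\vw$, whereas you shift first and then simplify using $10\vw=\vc=5\vx_2$; the arithmetic is the same in either order.
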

\begin{proof}
Since we have exact sequences \eqref{proj cover 1 for 255} and \eqref{proj cover 2 for 255} for projective covers, the following hold in $\zvect\X(2,5,5)$:
\begin{itemize}
  \item[-] $\co(\vx_3)[-1]=\co(-4\vx_2)=\co(\vx_2)(-10\vw);$
  \item[-] $\co(2\vx_3)[-1]=\co(-3\vx_2)=\co(2\vx_2)(-10\vw)$.
\end{itemize}
Then the result follows by noting $[2]=(20\vw)$ in $\zvect\X(2,5,5)$.
\end{proof}

Moreover, by Proposition \ref{non-zero morphisms between line bundles}, we have

\begin{lemma}\label{non-zero morphisms (2,5,5)}
The following statements hold in $\zvect\X(2,5,5)$.
\begin{itemize}
  \item[(1)] $\sHom(\co(\vx_3-\vx), \co(\vx_3))\neq 0$ if and only if $0\leq \vx\leq \vx_1+3\vx_2$;
  \item[(2)] $\sHom(\co(2\vx_3-\vx), \co(2\vx_3))\neq 0$ if and only if $0\leq \vx\leq \vx_1+2\vx_2+\vx_3.$
      \end{itemize}
\end{lemma}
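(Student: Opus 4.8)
The plan is to obtain both equivalences by specializing Proposition~\ref{non-zero morphisms between line bundles} to the weight type $(2,5,5)$, where $p_1=2$, $p_2=p_3=5$ and $2\vx_1=5\vx_2=5\vx_3=\vc$, and then translating the resulting conditions into the stated intervals. Since $\vx_3\in\Ss$ and $2\vx_3\in\Ss$ for this weight type, both parts are covered by cases already treated in Proposition~\ref{non-zero morphisms between line bundles}; no new homological input is needed, only a computation in the Picard group $\bl$.

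For statement (1) I would apply part (1) of Proposition~\ref{non-zero morphisms between line bundles} with $i=3$. It gives that $\sHom(\co(\vx_3-\vx),\co(\vx_3))\neq0$ precisely when $\vx\geq0$, the coefficient $l_3$ vanishes, and $\vx\ngeq\vx_3+\vw+p_3\vw=\vx_3+6\vw$. Using $\vw=\vc-\vx_1-\vx_2-\vx_3$ together with the relations above, one finds $6\vw=\vc-\vx_2-\vx_3$, hence $\vx_3+6\vw=\vc-\vx_2=4\vx_2$ in normal form. It then remains to check the identity
$$\{\vx\geq0 \mid l_3=0,\ \vx\ngeq 4\vx_2\}=\{\vx\mid 0\leq\vx\leq\vx_1+3\vx_2\},$$
both sides being the finite box $\{l_1\vx_1+l_2\vx_2\mid 0\leq l_1\leq1,\ 0\leq l_2\leq3\}$.

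For statement (2) I would instead apply part (3) of Proposition~\ref{non-zero morphisms between line bundles} with $i=3$ and $\{j,k\}=\{1,2\}$, using $2\vx_3\in\Ss$. This yields $\sHom(\co(2\vx_3-\vx),\co(2\vx_3))\neq0$ exactly when $\vx\geq0$, $l_3\leq1$, and $\vx\ngeq(p_1-2)\vx_1+(p_2-2)\vx_2=3\vx_2$. The task is then to verify
$$\{\vx\geq0 \mid l_3\leq1,\ \vx\ngeq 3\vx_2\}=\{\vx\mid 0\leq\vx\leq\vx_1+2\vx_2+\vx_3\},$$
both equal to the box $\{l_1\vx_1+l_2\vx_2+l_3\vx_3\mid 0\leq l_1\leq1,\ 0\leq l_2\leq2,\ 0\leq l_3\leq1\}$.

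The only genuinely delicate point --- and the step I expect to carry the weight of the proof --- is establishing these two set identities, i.e.\ turning the conditions ``$l_3$ small'' and ``$\vx\ngeq$ threshold'' into clean coordinate bounds. The controlling observation is that $5\vx_2=\vc$, so any nonzero $\vc$-component already pushes $\vx$ above the threshold: concretely, if $l\geq1$ and $l_3=0$ then $\vx-4\vx_2=l_1\vx_1+(l_2+5l-4)\vx_2\geq0$, forcing $\vx\geq4\vx_2$, and similarly $3\vx_2$ intervenes in part~(2). This collapses each region to the slice $l=0$, after which a degree count using $\delta$ --- the upper bounds $\vx_1+3\vx_2$ and $\vx_1+2\vx_2+\vx_3$ both have $\delta$-value $11$ --- confirms that they cut out exactly the asserted boxes. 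Finally, the one-dimensionality of each nonzero $\sHom$ is inherited verbatim from the last sentence of Proposition~\ref{non-zero morphisms between line bundles}.
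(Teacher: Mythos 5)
Your proposal is correct and follows exactly the paper's route: the paper proves this lemma by simply invoking Proposition~\ref{non-zero morphisms between line bundles}, and you carry out the same specialization (part (1) with $i=3$ giving the threshold $\vx_3+6\vw=4\vx_2$, part (3) with $i=3$ giving the threshold $3\vx_2$) together with the correct normal-form computation showing that $5\vx_2=\vc$ forces $l=0$ and collapses each region to the stated box. The arithmetic checks out, so nothing is missing.
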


\begin{proposition}\label{extension-free 255}
$T_{(2,5,5)}$ is extension-free in $\zvect\X(2,5,5)$.
\end{proposition}

\begin{proof}
        Recall that $T_{(2,5,5)}=\bigoplus\limits_{k=0}^{2}\bigoplus\limits_{a=1}^{4}\bS^k(\co(a\vx_3))$. For contradiction we assume $T_{(2,5,5)}$ is not extension-free. Then by Lemma
        \ref{extension-free formula for tau^k U[k]: general case}, there exist some integers $1\leq a, b \leq 4$, $0\leq k\leq 3$ and $0\neq m\in \Z$, such that $$\sHom(\co(a\vx_{3}), \bS^k(\co(b\vx_{3})[m]))\neq 0.$$
         Recall that $\vc=10\vw$ and $[2]=(2\vc)=(20\vw)$. Hence,
         by Lemma \ref{shift for 255} we have
                 $$\bS^k(\co(b\vx_{3})[m])=
         \left\{\begin{array}{lll}
         \co(b\vx_2+k\vw)((k+m)\vc), && \text{\;if\;} k+m \text{\;is\; odd};\\
          \co(b\vx_3+k\vw)((k+m)\vc), && \text{\;if\;} k+m \text{\;is\; even}.
             \end{array} \right.$$
  By Lemma \ref{non-zero morphisms (2,5,5)} (and its symmetric version by exchanging $\vx_2$ and $\vx_3$), we have for $i=2,3$,
\begin{equation}\label{four cases of bvx_i}0\leq b\vx_i-a\vx_3+k\vw+(k+m)\vc\leq
         \left\{\begin{array}{lll}
         \vx_1+3\vx_2, && \text{\;if\;} \co(b\vx_i)\in\tau^\mathbb{Z}(\co(\vx_3));\\
          \vx_1+3\vx_3, && \text{\;if\;} \co(b\vx_i)\in\tau^\mathbb{Z}(\co(\vx_2));\\
          \vx_1+2\vx_2+\vx_3, && \text{\;if\;} \co(b\vx_i)\in\tau^\mathbb{Z}(\co(2\vx_3));\\
          \vx_1+\vx_2+2\vx_3, && \text{\;if\;} \co(b\vx_i)\in\tau^\mathbb{Z}(\co(2\vx_2)).\\
             \end{array} \right.
             \end{equation}
       It follows that $0\leq 2(b-a)+k+10(k+m)\leq 11$ by considering the degrees, which yields that $k+m=0$ or 1.

        \emph{Case 1:}  $k+m=0$, then $2\leq k\leq 3$ since $m\neq 0$ and $(b-a)\vx_3+k\vw\geq 0$, 
        which follows that $(b-a)\vx_3\geq 2\vx_3$, i.e., $b-a\geq 2$. Hence $b=3$ or $4$. If $b=3$, then $a=1$. Observe that $3\vx_3=2\vx_2+2\vw$, i.e., $\co(3\vx_3)\in\tau^\mathbb{Z}(\co(2\vx_2))$. Thus $2\vx_3+k\vw\leq \vx_1+\vx_2+2\vx_3$ by \eqref{four cases of bvx_i}, a contradiction to $2\leq k\leq 3$. If $b=4$, then $a=1$ or $2$. Observe that $4\vx_3=\vx_2+6\vw$. By \eqref{four cases of bvx_i} we have $0\leq (4-a)\vx_3+k\vw\leq \vx_1+3\vx_3$, contradicting to $2\leq k\leq 3$.

        \emph{Case 2:}  $k+m=1$, then $m\neq 0$ implies $k\neq 1$.
         Moreover, $0\leq 2(b-a)+k+10\leq 11$ implies that $b\leq a$.
             \begin{itemize}
             \item[-] If $b=1$, then by \eqref{four cases of bvx_i} we have $0\leq \vx_2-a\vx_3+k\vw+\vc\leq \vx_1+3\vx_3$. By considering the normal forms, we see that the coefficient of $\vx_2$ for the middle term equals zero, which implies $k=1$, a contradiction.
             \item[-] If $b=2$, then by \eqref{four cases of bvx_i} we have $0\leq 2\vx_2-a\vx_3+k\vw+\vc\leq \vx_1+\vx_2+2\vx_3$. By considering the coefficient of $\vx_2$ in the normal form for the middle term, we get $k=2$. In this case, $2\vx_2-a\vx_3+2\vw+\vc=(8-a)\vx_3\leq \vx_1+\vx_2+2\vx_3$, a contradiction.
             \item[-] If $b=3$, then $a=3$ or $4$. Since $3\vx_2=2\vx_3+2\vw$, by
             \eqref{four cases of bvx_i} we have $0\leq 3\vx_2-a\vx_3+k\vw+\vc\leq \vx_1+2\vx_2+\vx_3$. By considering the coefficient of $\vx_3$ in the normal form for the middle term, we get $k=2$. In this case, $3\vx_2-a\vx_3+2\vw+\vc=\vx_2+(8-a)\vx_3\leq \vx_1+2\vx_2+\vx_3$, a contradiction.
             \item[-] If $b=4$, then $a=4$ and $k=0$ since $0\leq 2(b-a)+k+10\leq 11$. In this case, we get $4\vx_2-4\vx_3 +\vc\leq \vx_1+2\vx_2+\vx_3$, a contradiction.
              \end{itemize}
This finishes the proof.
\end{proof}
\subsection{The weight type (2,5,6)}

 Assume $\X$ has weight type (2,5,6). Then the $\delta$-datum is given by $\delta(\vc; \vx_1, \vx_2, \vx_3; \vw)=(30;15,6,5;4)$.

 According to \cite[Table 3]{Lenzing:Pena:2011}, $\{n\vw | n\vw\geq 0\}$ of $\Z \vw$ has minimal generating system $\{4\vw=\vx_2+2\vx_3; 5\vw=\vx_1+\vx_3; 6\vw=4\vx_2\}$, and the subalgebra $R=S|_{\mathbb{Z}\vw}\cong k[x,y,z]/(f)$, where $x=x_2x_3^2, y=x_1x_3, z=x_2^4$ and $f=xz^2+y^2z+x^4$. Moreover, $R$ is $\ZZ$-graded in the sense that $\deg (x,y,z;f)=(4,5,6;16)$.
Thus the second suspension functor of $\zvect\X(2,5,6)$ is given by degree shift: $[2]=(16\vw)=(\vc+4\vx_2+2\vx_3)$.


Since $4\vw=\vx_2+2\vx_3>0$, the set $\Ss$ defined in \eqref{SS} has the form $$\Ss
=\{\vx\, |\, 0\leq \vx\leq n\vw+\vc,  \text{\ for\ any \ } 2\leq n\leq 5\}
=\{\vx\, |\, 0\leq \vx\leq 3\vx_3 \text{\ or } 0\leq \vx\leq 2\vx_2+\vx_3 \}.$$
Hence each line bundle belongs to the orbit $\tau^{\Z}\co(\vx)$ for some $0\leq \vx\leq 3\vx_3$ or $0\leq \vx\leq 2\vx_2+\vx_3$.

%


By Proposition \ref{projective cover}, the projective covers of $\co(j\vx_i)$ for $1\leq j\leq 2\leq i\leq 3$ are given by the middle terms of the following exact sequences respectively:
\begin{equation}\label{proj cover 1 for 256}\xymatrix{0\ar[r]&\co(-\vc)\ar[rr]^-{(x_1^2, -x_2)}&&\co\oplus\co(-6\vw)\ar[rr]^-{(x_2, x_1^2)^t}&& \co(\vx_2)\ar[r]& 0;}
\end{equation}
\begin{equation}\label{proj cover 2 for 256}\xymatrix{0\ar[r]&\co(-\vx_1-3\vx_2)\ar[rr]^-{(x_1x_2^3, -x_3)}&&\co\oplus\co(-7\vw)\ar[rr]^-{(x_3, x_1x_2^3)^t}&& \co(\vx_3)\ar[r]& 0;}
\end{equation}
\begin{equation}\label{proj cover 3 for 256}\xymatrix{0\ar[r]&\co(-4\vx_3)\ar[rr]^-{(x_3^4, -x_2^2)}&&\co\oplus\co(-2\vw)\ar[rr]^-{(x_2^2, x_3^4)^t}&& \co(2\vx_2)\ar[r]& 0;}
\end{equation}
\begin{equation}\label{proj cover 4 for 256}\xymatrix{0\ar[r]&\co(-3\vx_2)\ar[rr]^-{(x_2^3, -x_3^2)}&&\co\oplus\co(-2\vw)\ar[rr]^-{(x_3^2, x_2^3)^t}&& \co(2\vx_3)\ar[r]& 0.}
\end{equation}

\begin{lemma} The following statements hold in $\zvect\X(2,5,6)$.
\begin{itemize}
  \item[(1)] $\co(\vx_2)[1]=\co(2\vx_3)(6\vw);$
  \item[(2)] $\co(\vx_3)[1]=\co(\vx_2+\vx_3)(5\vw);$
  \item[(3)] $\co(2\vx_2)[1]=\co(2\vx_2)(8\vw);$
  \item[(4)] $\co(2\vx_3)[1]=\co(\vx_2)(10\vw).$
\end{itemize}
\end{lemma}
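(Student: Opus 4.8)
The plan is to follow verbatim the pattern already used for the weight types $(2,4,5)$, $(2,4,7)$ and $(2,5,5)$ in the preceding subsections. Each of the four displayed identities will be obtained by first reading off the inverse suspension $\co(j\vx_i)[-1]$ from the explicit projective cover, and then applying the degree description $[2]=(16\vw)$ of the second suspension functor, recorded above for $\zvect\X(2,5,6)$, in order to pass from $[-1]$ to $[1]$.

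First I would recall that, under the $\tau^{\ZZ}\co$-exact structure, the line bundles lying in the orbit $\tau^{\ZZ}\co$ are precisely the projective-injective objects of $\vect\X$, hence vanish in the stable category $\zvect\X$. Therefore each projective cover sequence \eqref{proj cover 1 for 256}--\eqref{proj cover 4 for 256} exhibits its kernel as the inverse suspension of the corresponding cokernel in $\zvect\X$. Reading off the left-hand terms gives
\begin{gather*}
\co(\vx_2)[-1]=\co(-\vc),\quad \co(\vx_3)[-1]=\co(-\vx_1-3\vx_2),\\
\co(2\vx_2)[-1]=\co(-4\vx_3),\quad \co(2\vx_3)[-1]=\co(-3\vx_2).
\end{gather*}
Since $[2]=(16\vw)$ on $\zvect\X(2,5,6)$, I then have $\co(j\vx_i)[1]=\co(j\vx_i)[-1](16\vw)$ in each of the four cases.

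What remains is pure $\bl$-arithmetic: rewriting each shifted degree $(\,\cdot\,)+16\vw$ in the form asserted in the statement. This uses the defining relations $2\vx_1=5\vx_2=6\vx_3=\vc$ together with the generating identities $4\vw=\vx_2+2\vx_3$, $5\vw=\vx_1+\vx_3$, $6\vw=4\vx_2$ of the positive subsemigroup of $\ZZ\vw$; in particular these yield $10\vw=\vc+2\vx_3$ and $16\vw=\vc+4\vx_2+2\vx_3$. For example, in case $(1)$ substituting $16\vw=\vc+4\vx_2+2\vx_3$ into $-\vc+16\vw$ gives $4\vx_2+2\vx_3$, which equals $6\vw+2\vx_3$ via $6\vw=4\vx_2$, i.e. the degree of $\co(2\vx_3)(6\vw)$; case $(2)$ then reduces to $\vc=2\vx_1$ and $5\vw=\vx_1+\vx_3$, case $(3)$ to $\vc=6\vx_3$, and case $(4)$ to $10\vw=\vc+2\vx_3$. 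I do not expect any genuine obstacle here: the argument is structurally identical to the earlier lemmas, and the only point demanding care is the bookkeeping of translating freely between multiples of $\vw$ and normal forms in $\bl$, which is routine once the three generating relations for $\vw$ are in hand.
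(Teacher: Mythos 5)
Your proposal is correct and follows essentially the same route as the paper: read off $\co(j\vx_i)[-1]$ as the kernel of the projective cover sequences \eqref{proj cover 1 for 256}--\eqref{proj cover 4 for 256}, then apply $[2]=(16\vw)$ and verify the resulting degree identities in $\bl$ using $\vc=2\vx_1=5\vx_2=6\vx_3$ and the generators $4\vw,5\vw,6\vw$ of the positive semigroup. The only cosmetic difference is that the paper first rewrites each kernel in the form $\co(\,\cdot\,)(-k\vw)$ and then adds $16\vw$, whereas you add $16\vw$ first; the arithmetic is identical and your sample computations check out.
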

\begin{proof}
Since we have exact sequences \eqref{proj cover 1 for 256}--\eqref{proj cover 4 for 256} for projective covers, the following hold in $\zvect\X(2,5,6)$:
\begin{itemize}
  \item[-] $\co(\vx_2)[-1]=\co(-\vc)=\co(2\vx_3)(-10\vw);$
  \item[-] $\co(\vx_3)[-1]=\co(-\vx_1-3\vx_2)=\co(\vx_2+\vx_3)(-11\vw);$
  \item[-] $\co(2\vx_2)[-1]=\co(-4\vx_3)=\co(2\vx_2)(-8\vw);$
  \item[-] $\co(2\vx_3)[-1]=\co(-3\vx_2)=\co(\vx_2)(-6\vw).$
\end{itemize}
Then the result follows by noting $[2]=(16\vw)$ in $\zvect\X(2,5,6)$.
\end{proof}
%
%
Moreover, by Proposition \ref{non-zero morphisms between line bundles}, we have

\begin{lemma}\label{Hom relations for 256}
For any $\vx=\sum_{1\leq i\leq 3}l_i\vx_i+l\vc$ of normal form, the following hold in $\zvect\X(2,5,6)$.
\begin{itemize}
  \item[(1)] $\sHom(\co(\vx_2-\vx), \co(\vx_2))\neq 0$ if and only if $0\leq \vx\leq \vx_1+5\vx_3$;
  \item[(2)] $\sHom(\co(\vx_3-\vx), \co(\vx_3))\neq 0$ if and only if $0\leq \vx\leq \vc+2\vx_2 \text{\ and\ } l_3=0$;
  \item[(3)] $\sHom(\co(2\vx_2-\vx), \co(2\vx_2))\neq 0$ if and only if $0\leq \vx\leq \vx_1+\vx_2+3\vx_3;$
  \item[(4)] $\sHom(\co(2\vx_3-\vx), \co(2\vx_3))\neq 0$ if and only if $0\leq \vx\leq \vx_1+2\vx_2+\vx_3.$
\end{itemize}
\end{lemma}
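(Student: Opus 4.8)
The plan is to deduce all four equivalences directly from Proposition~\ref{non-zero morphisms between line bundles}, which already supplies, for each of the four base line bundles $\co(\vx_2),\co(\vx_3),\co(2\vx_2),\co(2\vx_3)$, a criterion for non-vanishing of the relevant $\sHom$-space together with the one-dimensionality assertion. Since $\vx_2,\vx_3,2\vx_2,2\vx_3\in\Ss$ by the description of $\Ss$ computed just above, every part of that proposition applies here, and what remains is a purely combinatorial task: rewriting its abstract conditions as the explicit intervals claimed in the lemma. First I would record the arithmetic of $\LL(2,5,6)$ to be used throughout, namely $\vc=2\vx_1=5\vx_2=6\vx_3$, the relations $4\vw=\vx_2+2\vx_3$, $5\vw=\vx_1+\vx_3$, $6\vw=4\vx_2$ from the generating system listed above, and the normal-form substitutions $-\vx_1=\vx_1-\vc$ and $-2\vx_2=3\vx_2-\vc$.

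Next I would compute the critical elements appearing in the proposition. For the two "simple" cases (parts (1),(2) of the lemma, via Proposition~\ref{non-zero morphisms between line bundles}(1)) I evaluate $\vx_i+\vw+p_i\vw$: using $6\vw=4\vx_2$ one gets $\vx_2+6\vw=5\vx_2=\vc$, while a short reduction (replacing $-6\vx_3$ by $-\vc$, etc.) gives $\vx_3+7\vw=\vx_1+3\vx_2$. For the two "double" cases (parts (3),(4), via Proposition~\ref{non-zero morphisms between line bundles}(3)) the relevant elements are $(p_1-2)\vx_1+(p_3-2)\vx_3=4\vx_3$ for $\co(2\vx_2)$ and $(p_1-2)\vx_1+(p_2-2)\vx_2=3\vx_2$ for $\co(2\vx_3)$.

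I would then translate each abstract region into the stated interval. For (3) the conditions $l_2\le1$ and $\vx\ngeq 4\vx_3$ force the $\vc$-coefficient $l=0$ (any $\vc$-summand already satisfies $\vc\ge 4\vx_3$) and then $l_3\le3$, giving exactly $0\le\vx\le\vx_1+\vx_2+3\vx_3$; case (4) is entirely analogous with $3\vx_2$ replacing $4\vx_3$, yielding $0\le\vx\le\vx_1+2\vx_2+\vx_3$. For (1), $l_2=0$ together with $\vx\ngeq\vc$ forces $l=0$, and the residual bounds $l_1\le1$, $l_3\le5$ assemble into $0\le\vx\le\vx_1+5\vx_3$; here one checks in passing that this interval already enforces $l_2=0$, because any $\vx$ with $l_2\ge1$ satisfies $\vx\ge\vx_2$, whereas $\vx_2\nleq\vx_1+5\vx_3$, so no separate hypothesis is needed in the statement.

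The delicate case, and the step I expect to be the main obstacle, is (2), where the critical element $\vx_1+3\vx_2$ is \emph{mixed} rather than a multiple of a single $\vx_i$. Consequently the region $\{\vx\ge0:\,l_3=0,\ \vx\ngeq\vx_1+3\vx_2\}$ is not visibly an interval, and the degree function is misleading here: one has $\delta(\vc+2\vx_2)=42>33=\delta(\vx_1+3\vx_2)$ even though $\vc+2\vx_2\ngeq\vx_1+3\vx_2$ (indeed $\vc+2\vx_2-\vx_1-3\vx_2=\vx_1-\vx_2\ngeq0$). I would control this by first showing $\vx\ngeq\vx_1+3\vx_2$ forces $l\le1$, since $2\vc\ge\vx_1+3\vx_2$ (as $2\vc-\vx_1-3\vx_2=\vx_1+2\vx_2\ge0$), and then analysing the slices $l=0$ and $l=1$ by hand, checking at each candidate boundary element whether it lies below $\vc+2\vx_2$. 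This bookkeeping identifies the region as precisely $\{0\le\vx\le\vc+2\vx_2,\ l_3=0\}$. Finally, the one-dimensionality of each nonzero $\sHom$-space is inherited verbatim from the last assertion of Proposition~\ref{non-zero morphisms between line bundles}, requiring no further argument.
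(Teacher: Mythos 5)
Your proposal is correct and follows the same route as the paper, which derives this lemma purely by specializing Proposition \ref{non-zero morphisms between line bundles} to the four elements $\vx_2,\vx_3,2\vx_2,2\vx_3$ of $\Ss$ for weight type $(2,5,6)$; the paper leaves the lattice bookkeeping implicit, whereas you carry it out. Your computations of the critical elements ($\vx_2+6\vw=\vc$, $\vx_3+7\vw=\vx_1+3\vx_2$, and the bounds $4\vx_3$, $3\vx_2$) and the resulting interval descriptions, including the slice-by-slice analysis in the mixed case (2), all check out.
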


\begin{proposition}\label{extension-free 256}
$T_{(2,5,6)}$ is extension-free in $\zvect\X(2,5,6)$.\end{proposition}

\begin{proof}
        Recall that $T_{(2,5,6)}=\big(\bigoplus\limits_{k=0}^{3}\bS^k(\co(\vx_3))\big)\oplus \big( \bigoplus\limits_{k=0}^{2}\bS^k(\co(2\vx_3)\oplus \co(4\vx_3)\oplus \co(6\vx_3))\big).$ In $\zvect\X(2,5,6)$ we have $[2]=(16\vw)$, $\co(4\vx_3)=\co(2\vx_2)(2\vw)$ and $\co(6\vx_3)=\co(\vx_2)(6\vw)$.
        For any objects $X,Y\in\zvect\X(2,5,6)$, by Serre duality we have $$\sHom(X, Y[2n+1])=D\sHom(Y[2n], \tau X)=D\sHom(Y, X((1-16n)\vw)).$$
        Denote by $V_0=\bigoplus\limits_{k=0}^{3}\bS^{k}(\co(\vx_3))$ and $V_i=\bigoplus\limits_{k=0}^{2}\bS^{k}(\co(2i\vx_3))$ for $1\leq i\leq 3$. Then $T=V_0\oplus V_1\oplus V_2\oplus V_3$. We consider the following steps.
        \begin{itemize}
        \item[(i)] $V_0$ is extension-free. For contradiction,
            by Lemma \ref{extension-free formula for tau^k U[k]} there exist $m\neq 0$ and $0\leq k\leq 4$, such that $\sHom(\co(\vx_3), \tau^{k}\co(\vx_3)[k+m])\neq 0$.
            By Lemma \ref{Hom relations for 256} we have
            \begin{itemize}
                \item[-]$\sHom(\co(\vx_3), \co(\vx_3+n\vw))\neq 0$ if and only if $n=0$ or $6$.
            \end{itemize}
            If $k+m=2n$ for some $n$, then $k+16n=0$ or 6, it follows that $k=n=0$; if $k+m=2n+1$ for some $n$, then $1-(k+16n)=0$ or 6, it follows that $n=0$ and $k=1$. In both cases we get $m=0$, a contradiction.
        \item[(ii)] $V_i$ is extension-free for any $1\leq i\leq 3$. For contradiction, by Lemma \ref{extension-free formula for tau^k U[k]} there exist $m\neq 0$ and $0\leq k\leq 3$, such that $\sHom(\co(2i\vx_3), \tau^{k}\co(2i\vx_3)[k+m])\neq 0$. By Lemma \ref{Hom relations for 256} we have
            \begin{itemize}
                \item[-]$\sHom(\co(2\vx_3), \co(2\vx_3+n\vw))\neq 0$ if and only if $n=0$ or 5;
                \item[-]$\sHom(\co(4\vx_3), \co(4\vx_3+n\vw))\neq 0$ if and only if $n=0,4,5$ or 9;
                \item[-]$\sHom(\co(6\vx_3), \co(6\vx_3+n\vw))\neq 0$ if and only if $n=0$ or 5.
            \end{itemize}
            If $k+m=2n$ for some $n$, then $k+16n\in\{0,4,5,9\}$, it follows that $k=n=0$; if $k+m=2n+1$ for some $n$, then $1-(k+16n)\in\{0,4,5,9\}$, it follows that $n=0, k=1$. In both cases we get $m=0$, a contradiction.
        \item[(iii)] $V_0\oplus V_i$ is extension-free for any $1\leq i\leq 3$. For contradiction, by Lemma \ref{extension-free formula for tau^k U[k]: general case} there exist $m\neq 0$ and $-3\leq k\leq 3$, such that $\sHom(\co(\vx_3), \tau^{k}\co(2i\vx_3)[k+m])\neq 0$.
            By Lemma \ref{Hom relations for 256} we have
            \begin{itemize}
                \item[-]$\sHom(\co(\vx_3), \co(2i\vx_3+n\vw))\neq 0$ if and only if $n=0$;
                \item[-]$\sHom(\co(2i\vx_3), \co(\vx_3+n\vw))\neq 0$ if and only if $n=4i+1$.
            \end{itemize}
            If $k+m=2n$ for some $n$, then $k+16n=0$, it follows that $k=n=0$, then $m=0$, a contradiction; if $k+m=2n+1$ for some $n$, then $1-(k+16n)=4i+1$, it follows that $n=0$, then $k=-4i$, a contradiction.
        \item[(iv)] $V_i\oplus V_j$ is extension-free for any $1\leq i<j\leq 3$. For contradiction, by Lemma \ref{extension-free formula for tau^k U[k]: general case} there exist $m\neq 0$ and $-2\leq k\leq 3$, such that $\sHom(\co(2i\vx_3), \tau^{k}\co(2j\vx_3)[k+m])\neq 0$.
            By Lemma \ref{Hom relations for 256} we have
            \begin{itemize}
                \item[-]$\sHom(\co(2i\vx_3), \co(2j\vx_3+n\vw))\neq 0$ if and only if $n=0$ or $5$;
                \item[-]$\sHom(\co(2j\vx_3), \co(2i\vx_3+n\vw))\neq 0$ if and only if $n=4(j-i)$ or $4(j-i)+5$.
            \end{itemize}
            If $k+m=2n$ for some $n$, then $k+16n=0$ or $5$, it follows that $k=n=0$, then $m=0$, a contradiction; if $k+m=2n+1$ for some $n$, then $1-(k+16n)=4(j-i)$ or $4(j-i)+5$, it follows that $n=0$ and $k=1-4(j-i)$ or $-4-4(j-i)$, a contradiction.
        \end{itemize}
Then the proof is finished.
\end{proof}





\def\cprime{$'$} \def\cprime{$'$} \def\cprime{$'$}

\bibliographystyle{plain}
\bibliography{reference}

\end{document}